\newcommand{\dir}{{\operatorname{dir}}}
\newcommand{\Char}{\operatorname{Char}}
\newcommand{\scal}[2]{\langle #1,#2\rangle}
\newcommand{\rr}[1]{\mathbf R^{#1}}
\newcommand{\nm}[2]{\Vert #1\Vert _{#2}}
\newcommand{\Op}[1]{\operatorname{Op}(#1)}
\newcommand{\sets}[2]{\{ \, #1\, ;\, #2\, \} }
\newcommand{\ep}{\varepsilon}
\newcommand{\fy}{\varphi}
\newcommand{\cdo}{\, \cdot \, }
\newcommand{\supp}{\operatorname{supp}}
\newcommand{\eabs}[1]{\langle #1\rangle}     
\newcommand{\vrum}{\vspace{0.1cm}}
\newcommand{\back}[1]{\backslash {#1}}
\newcommand{\op}{\operatorname{Op}}
\newcommand{\SG}{\operatorname{SG}}
\newcommand{\WFF}{\operatorname{WF}}
\newcommand{\WF}[1]{\operatorname{WF}^{#1}}
\newcommand{\norm}[1]{\langle#1\rangle}
\def\cB{{\mathcal B}}
\def\cC{{\mathcal C}}
\def\cF{{\mathscr F}}
\def\cS{{\mathscr S}}
\def\sS{{\mathscr S}}
\def\SG{\operatorname{SG}}
\def\mascP{\mathscr P}
\def\mascS{\mathscr S}
\newcommand{\WFgB}{\operatorname{WF}_{\cB}}
\numberwithin{equation}{section}          
\newtheorem{thm}{Theorem}
\numberwithin{thm}{section}
\newcommand{\rubrik}{}
\newtheorem{prop}[thm]{Proposition}
\newtheorem{cor}[thm]{Corollary}
\newtheorem{lemma}[thm]{Lemma}
\theoremstyle{definition}
\newtheorem{defn}[thm]{Definition}
\newtheorem{example}[thm]{Example}
\theoremstyle{remark}
\newtheorem{rem}[thm]{Remark}
\def\zz#1{\mathbf{Z}^{#1}}
\def\Ph{\mathfrak{F}}
\def\Phr{\mathfrak{F}^r}
\def\Sym#1{\mathrm{Sym}\left(#1\right)}
\newcommand{\semi}[1]{\langle #1 \rangle}      
\newcommand \ciliuno{\mathcal{C}}
\newcommand \esse{\mathscr{S}}
\author{Sandro Coriasco}
\address{Dipartimento di Matematica ``G. Peano'', Universit\`a degli Studi di Torino, Torino, Italy}
\email{sandro.coriasco@unito.it}
\author{Karoline Johansson}
\author{Joachim Toft}
\address{Department of Computer science, Physics and Mathematics,
Linn{\ae}us University, V{\"a}xj{\"o}, Sweden}
\email{joachim.toft@lnu.se}
\title{Calculus, continuity and global wave-front
properties for\\ Fourier integral operators on $\rr{d}$}
\keywords{Wave-front, Fourier Integral Operator, Banach, modulation, micro-local}
\subjclass[2010]{35A18, 35S30, 42B05, 35H10}
\begin{document}

\begin{abstract}
We illustrate the composition properties 
for an extended family of $\SG$
Fourier integral operators.
We prove continuity results for
operators in this class with respect to $L^2$
and weighted modulation spaces, 
and discuss continuity
on $\cS$, $\cS^\prime$ and on 
weighted Sobolev spaces. We 
study mapping properties of global wave-front sets under the action of
these Fourier integral operators. We extend classical results to 
more general situations. For example, there are no requirements of homogeneity for the phase functions. 
Finally, we apply our results to the study of of the propagation of singularities,
in the context of modulation spaces, for the solutions to the
Cauchy problems for the corresponding linear hyperbolic operators.
\end{abstract}

\maketitle

\setcounter{tocdepth}{1}

\tableofcontents

\section{Introduction}\label{sec0}
%

\par

In \cite{CJT2}, global wave-front sets with respect to convenient Banach or
Fr\'{e}chet spaces were introduced, and global mapping properties of
pseudo-differential operators of $\SG$-type
were established in terms of these wave-front sets (see, e.g.
\cite{Co, coriasco, CJT1, CJT2, CJT3, CoMa, CoPa, Me, Sc}).
For any such Banach or Fr\'{e}chet space $\cB$ and
tempered distribution $f$, the global wave-front set $\WFF _{\cB}(f)$ is the
union of three components $\WFF _{\cB}^m(f)$, $m=1,2,3$. The first
component (for $m=1$) describes the local wave-front set which informs
where $f$ locally fails to belong to $\cB$, as well as the directions where
the singularities (with respect to $\cB$) propagates. The second and third
components (for $m=2$ or $m=3$) informs where at infinity the growth and
oscillations of $f$ are strong enough such that $f$ fails to belong to $\cB$.
We remark that $\WFF _{\sS}^1(f)$, $\WFF _{\sS}^2(f)$ and $\WFF
_{\sS}^3(f)$ agree with $\WFF _{\sS}^\psi (f)$, $\WFF _{\sS}^e(f)$ and
$\WFF _{\sS}^{\psi e}(f)$, respectively, in \cite{CoMa}.
Note also that for admissible $\cB$, these wave-front sets give suitable
information for local and global behavior, since $f$ belongs to $\cB$
globally (locally), if and only if $\WFF _{\cB}(f)=\emptyset$ ($\WFF
_{\cB}^1(f)=\emptyset$).

\par

It is convenient to formulate mapping properties for pseudo-differential
operators of $\SG$-type in terms of $\SG$-ordered pairs $(\cB ,\cC)$,
where $\cB$ and $\cC$ should be appropriate target and image spaces
of the involved pseudo-differential operators. (Cf. \cite{CJT2}.) More
precisely, the pair $(\cB ,\cC )$ of spaces $\cB$ and $\cC$ containing
$\sS$ and contained in $\sS '$, is called $\SG$-ordered with respect to
the weight $\omega _0$ if the mappings
\begin{equation}\label{invertoperators}
\begin{alignedat}{2}
\op (a)\, :\, \cB &\to \cC ,&\quad 
\op (b)^*\, :\, \cC &\to \cB ,
\\[1ex]
\op (c)\, :\, \cB &\to \cB  &\quad \text{and}\quad
\op (c)\, :\, \cC &\to \cC
\end{alignedat}
\end{equation}
are continuous for every $a\in \SG ^{(\omega _0)}$, $b\in \SG
^{(1/\omega _0)}$ and $c\in \SG ^{0,0}$. If it is only required that the first
mapping property in \eqref{invertoperators} holds, then the pair $(\cB ,\cC )$
is called \emph{weakly $\SG$-ordered}. Here $\SG ^{(\omega )}$,
the set of all $\SG$-symbols with respect to $\omega$, belongs to an
extended family of symbol classes of $\SG$-type. We refer to
\cite{CoMa} for the definition of (also classical) $\SG$-symbols.
We notice that \eqref{invertoperators} is true also after $\op (b)$ is
replaced by its adjoint $\op (b)^*$, because $\op (\SG ^{(\omega )})^*
= \op (\SG ^{(\omega )})$.

\par

Important examples on $\SG$-ordered pairs are the Schwartz,
tempered distributions and modulation spaces. More
precisely, in \cite{CJT2} it is noticed that $(\mascS ,\mascS )$ and
$(\mascS ',\mascS ')$ are $\SG$-ordered pairs, and for any weight
$\omega$ and any modulation space $\cB$, there is a (unique)
modulation space $\cC$ such
that $(\cB ,\cC )$ is an $\SG$-ordered pair with respect to $\omega$. In
particular, the family of $\SG$-ordered pairs is broad in the sense that $\cB$
can be chosen as a Sobolev space, or, more general, as a Sobolev-Kato
space, since such spaces are special cases of modulation spaces.
Moreover, if $\SG ^{(\omega )}$ is a classical symbol class of
$\SG$-type and $\cB$ is a Sobolev-Kato space, then $\cC$ is also a
Sobolev-Kato space.

\par

For any $\SG$-ordered pairs $(\cB ,\cC )$ with respect to $\omega$, it is
proved in \cite{CJT1 , CJT2} that the wave-front sets with respect to
$\cB$ and $\cC$ posses convenient mapping properties. For example,
if $f\in \sS '$ and $a\in \SG ^{(\omega )}$, then \eqref{invertoperators}
is refined as
\begin{equation}\label{invertoperatorsWF}
\begin{aligned}
\WFF _{\cC}(\op (a)f) &\subseteq \WFF _{\cB}(f),
\\[1ex]
\text{and}\quad
\WFF _{\cC}^m(\op (a)f) &\subseteq \WFF _{\cB}^m(f),\qquad m=1,2,3 ,
\end{aligned}
\end{equation}
and that reversed inclusions are obtained by adding the set of characteristic
points to the left-hand sides in \eqref{invertoperatorsWF}. In particular,
since the set of characteristic points is empty for elliptic operators, it follows
that equalities are attained in \eqref{invertoperatorsWF} for such operators.

\par

In this paper we establish similar properties for Fourier integral
operators. More precisely, for any symbol $a$ in
$\SG ^{(\omega )}$ for some weight $\omega$, the Fourier integral
operator $\op _{\fy}(a)$ is given by
\begin{align*}
f\mapsto (\op _\fy (a)f)(x)&\equiv (2\pi )^{-d}\int _{\rr d}e^{i\fy (x,\xi )} a(x,\xi )
\widehat f(\xi )\, d\xi ,
\intertext{and its formal $L^2$-adjoint by}
f\mapsto (\op _\fy (a)^*f)(x)&\equiv (2\pi )^{-d}\iint _{\rr {2d}}
e^{i(\scal x\xi -\fy (y,\xi ))} \overline {a(y,\xi )} f(y)\, dyd\xi .
\end{align*}
The operator $\op _\fy ^*(a)=\op _\fy (a)^*$ is here
called Fourier integral operator of type II, while $\op _\fy (a)$ is called a
Fourier integral operator of type I, with phase function $\varphi$ and amplitude
(or symbol) $a$. The phase function $\fy$ should be in $\SG ^{1,1}_{1,1}$ and satisfy
\begin{equation}\label{phasecondCones}
\eabs {\fy ' _x(x,\xi )}\asymp \eabs \xi \quad \text{and}\quad
\eabs {\fy ' _\xi (x,\xi )}\asymp \eabs x.
\end{equation}
Here and in what follows, $A\asymp B$ means that $A\lesssim B$ and $B\lesssim A$,
where $A\lesssim B$ means that $A\le c\cdot B$, for a suitable constant $c>0$.
Furthermore, $\fy$ should also fulfill the usual (global) non-degeneracy condition
$$
|\det (\fy '' _{x\xi}(x,\xi ))|\ge c ,\qquad x,\xi \in \rr d,
$$
for some constant $c>0$.

\par

In Section \ref{sec3}, the notion on $\SG$-ordered
pair from \cite{CJT2} is reformulated to include such Fourier integral operators,
where the operators $\op (a)$ and $\op (b)^*$ in \eqref{invertoperators} are
replaced by $\op _{\fy}(a)$ and $\op _{\fy}(b)^*$, respectively, and takes into
account the phase-function $\varphi$.

\par

In order to establish wave-front results, similar to
\eqref{invertoperatorsWF}, it is also required that the phase functions
fulfill some further natural conditions, namely, that they preserve shapes in
certain ways near the points in the phase space $T^*\rr{d}\simeq \rr {2d}$
(see Section \ref{sec4}).
In fact, the definitions of wave-front sets of appropriate distributions
are based on the behavior in \emph{cones} of corresponding Fourier
transformations, after localizing the involved distributions near points or
along certain directions. 

\par

In order to explain our main results, let $\phi$ be the canonical
transformation of $T^*\rr d$ generated by $\fy$, 
and consider an elliptic
Fourier integral operator $\op _\fy (a)$ with amplitude $a\in\SG ^{(\omega _0)}$.
If $(\cB ,\cC )$ are (weakly) $\SG$-ordered with respect to  $\omega _0$ and
$\varphi$ (see Section \ref{sec3} for precise definitions), then, under some natural 
\textit{invariance conditions} on the weight $\omega_0$,
\begin{equation}\label{WFandFIO}
\WFF _{\cC}(\op _\fy (a)f) = \phi (\WFF _{\cB}(f)). 
\end{equation}
A similar result holds for $\op_\fy^*(a)f$, namely
\begin{equation}\label{WFandFIObis}
\WFF _{\widetilde{\cB}}(\op _\fy^* (a)f) = \phi^{-1} (\WFF _{\widetilde{\cC}}(f)), 
\end{equation}
when $\op_\fy^*(a)\colon\widetilde{\cC}\to\widetilde{\cB}$, with a
(in general, different) couple of admissible spaces $\widetilde{\cC},
\widetilde{\cB}$, and the inverse $\phi^{-1}$ of the canonical
transformation in \eqref{WFandFIO}. More generally,
by dropping the ellipticity of the amplitude functions, with
$a\in\SG^{(\omega_1)}$, $b\in\SG^{(\omega_2)}$, $(\cB_1 ,\cC_1,
\cB_2, \cC_2 )$ being $\SG$-ordered with respect to
$\omega_1$, $\omega_2$ and $\varphi$, we show that
\begin{equation}\label{WFandFIOter}
\WFF _{\cC_1}(\op _\fy (a)f) \subseteq \phi (\WFF _{\cB_1}(f))^\mathrm{con}
\end{equation}
and
\begin{equation}\label{WFandFIOquater}
\WFF _{{\cB_2}}(\op _\fy^* (b)f) \subseteq \phi^{-1} (\WFF _{{\cC_2}}(f))^\mathrm{con}, 
\end{equation}
provided that the phase function $\varphi$ additionally fulfills conditions
similar to those in H. Kumano-Go \cite{Ku}. Notice that these conditions on the phase function
are automatically satisfied by the phase functions arising from the
short-time solutions to hyperbolic Cauchy problems in the $\SG$-classical context,
see \cite{Co, coriasco2, CoPa, CoRo}. We then apply our results to
describe the propagation of singularities from the initial data to the
solution to such $\SG$-hyperbolic Cauchy problems.

\medskip

The results above are based on comprehensive investigations of algebraic and
continuity properties of the involved Fourier integral operators. A significant
part of these investigations concern compositions between Fourier integral
operators of type I or II, with pseudo-differential operators. This is performed in
\cite{CoTo2}, where it is proved that for any Fourier integral operators
$\op _\fy (a)$ and $\op _\fy ^*(b)$  with $a,b\in \SG ^{(\omega _1)}$, and some
$p\in \SG ^{(\omega_2 )}$, then, under suitable invariance conditions on the weights,
\begin{align*}
\op (p)\circ \op _\fy (a) &= \op _\fy (c_1) \mod \op (\cB _0),
\\[1ex]
\op (p)\circ \op _\fy ^*(b) &= \op _\fy ^*(c_2) \mod \op (\cB _0),
\\[1ex]
\op _\fy (a) \circ \op (p) &= \op _\fy (c_3) \mod \op (\cB _0)
\\[1ex]
\op _\fy ^*(b) \circ \op (p) &= \op _\fy ^*(c_4) \mod \op (\cB _0),
\end{align*}
for some $c_j\in \SG ^{(\omega_{0,j})}$, $j=1,\dots ,4$, and suitable weights
$\omega_{0,j}$. Here $\op (\cB _0)$
is a set of appropriate smoothing operators, depending on the 
symbols and the phase function. Furthermore,
if $a\in \SG ^{(\omega _1)}$ and $b\in \SG ^{(\omega _2)}$, then it is also
proved that $\op _\fy ^*(b)\circ \op _\fy (a)$ and $\op _\fy(a)\circ \op _\fy^*
(b)$ are equal to pseudo-differential operators $\op (c_5)$ and $\op (c_6)$,
respectively, for some $c_5,c_6\in \SG ^{(\omega _{0,j})}$, $j=5,6$.
We also present asymptotic formulae for $c_j$, $j=1,\dots ,6$,
in terms of $a$ and $b$, or of
$a$, $b$ and $p$, modulo smoothing terms. The extensions of the
calculus of $\SG$ Fourier integral operators developed in \cite{coriasco} 
to the classes $\SG^{(\omega _0)}_{r,\rho}$,
introduced and systematically used in \cite{CJT1, CJT2, CJT3}, is recalled in
Section \ref{sec2}.

\par

The formulae \eqref{WFandFIO}--\eqref{WFandFIOquater}, given by
the calculus recalled in Section \ref{sec2}, also rely on
certain asymptotic expansions in the framework of symbolic calculus of
$\SG$ pseudo-differential operators, as well as on continuity properties for
$\SG$-ordered pairs. 

The first of the above two points concerns making sense of expansions of the form
$$
a\sim \sum a_j ,
$$
in the framework of the generalised $\SG$-classes $\SG^{(\omega_0)}_{r,\rho}$.
The ideas are similar to the corresponding
properties in the usual H{\"o}rmander calculus in Section 18.1 in \cite{Ho1}. For
this reason, in \cite{CoTo} we have established properties of asymptotic
expansions for symbols classes of the form $S(m,g)$, parameterized
by the weight function $m$ and Riemannian metric $g$ on the phase
space (cf. Section 18.4 in \cite{Ho1}). Note here that any $\SG$-class is equal
to $S(m,g)$ for some choice of $m$ and $g$, and that similar
facts hold for the H{\"o}rmander classes $S^r _{\rho ,\delta}$. The results
therefore cover several situations on asymptotic expansions for
pseudo-differential operators.

\par

With respect to the second point above,
we study in Section \ref{sec3} some specific spaces which are
$\SG$-ordered or weakly $\SG$-ordered. For example, we present necessary and
sufficient conditions for the involved weight functions and parameters, in order
for Sobolev-Kato spaces, Sobolev spaces and modulation spaces should be
$\SG$-ordered or weakly $\SG$-ordered. A direct proof
of the continuity from $L^2(\rr{d})$ to itself of $\SG$ Fourier integral operators
with a uniformly bounded amplitude (that is, the amplitude is of order $0,0$, or,
equivalently, the weight $\omega$ is uniformly bounded), similar to the one given 
in \cite{coriasco}, can be found in \cite{CoTo2}. Moreover, taking advantage of
the calculus developed in \cite{CoTo2}, recalled in Section \ref{sec2} for the convenience
of the reader, and relying on results in 
\cite{CorNicRod1} and \cite{GT}, we prove that our classes of $\SG$ Fourier
integral operators are continuous between suitable couples of weighted
modulation spaces $(M^p_{(\omega_1)}(\rr{d}), M^p_{(\omega_2)}(\rr{d}))$.

\par

Finally, in Section \ref{sec4} we prove our main propagation results and illustrate
their application to Cauchy problems, for hyperbolic linear operators and first order
systems with constant multiplicities.

\subsection*{Acknowledgements}
The first author gratefully acknowledges the partial support from the
PRIN Project ``Aspetti variazionali e perturbativi nei problemi differenziali nonlineari''
(coordinator at Università degli Studi di Torino: Prof. S. Terracini)
during the development of the present paper.

\section{Preliminaries}\label{sec1}
%

\par

We begin by fixing the notation and recalling some basic concepts
which will be needed below.
In Subsections
\ref{subs:1.1}-\ref{subs:1.4} we mainly summarizes part of the contents
of Sections 2 in \cite{CJT2,CJT3,CoTo2}. Some of the results that we recall,
compared with their original formulation in the $\SG$ context
appeared in \cite{coriasco}, are here given in a slightly more general
form, adapted to the definitions given in Subsection \ref{subs:1.3}. 

\par
\subsection{Weight functions}\label{subs:1.1}
Let $\omega$ and $v$ be positive measurable functions
on $\rr d$. Then $\omega$ is called $v$-moderate if
\begin{equation}\label{moderate}
\omega (x+y) \lesssim \omega (x)v(y)
\end{equation}
If $v$ in \eqref{moderate} can be chosen as a polynomial, then $\omega$ is
called a function or weight of \emph{polynomial type}.
We let $\mathscr P(\rr d)$ be the set
of all polynomial type functions on $\rr d$. If $\omega (x,\xi
)\in \mathscr P(\rr {2d})$ is constant with respect to the
$x$-variable or the $\xi$-variable, then we sometimes write $\omega
(\xi )$, respectively $\omega (x)$, instead of $\omega (x,\xi )$,
and consider
$\omega$ as an element in $\mathscr P(\rr {2d})$ or in $\mathscr P(\rr
d)$ depending on the situation. We say that $v$ is submultiplicative
if \eqref{moderate} holds for $\omega=v$. For convenience we assume
that all submultiplicative weights are even, and
$v$ and $v_j$ always stand for submultiplicative weights, if nothing else is stated.

\par

Without loss of generality we may assume that every $\omega \in
\mathscr P(\rr d)$ is smooth and satisfies the ellipticity condition
$\partial ^\alpha \omega / \omega \in L^\infty$. In fact,
by Lemma 1.2 in \cite {To8} it follows
that for each $\omega \in \mathscr P(\rr d)$, there is a smooth and
elliptic $\omega _0\in \mathscr P(\rr d)$ which is equivalent to
$\omega$ in the sense
\begin{equation}\label{onestar}
\omega \asymp \omega _0.
\end{equation}

\par

The weights involved in the sequel have to satisfy additional conditions.
More precisely let $r,\rho \ge0$. Then $\mathscr P_{r,\rho}(\rr {2d})$ is
the set of all $\omega (x,\xi )$ in $\mathscr P(\rr {2d})\bigcap$ $C^\infty (\rr
{2d})$ such that
\begin{equation}\label{SGCondWeight}
\eabs x^{r|\alpha |}\eabs \xi ^{\rho |\beta |}\frac {\partial ^\alpha
_x\partial ^\beta _\xi \omega (x,\xi )}{\omega (x,\xi )}\in L^\infty
(\rr {2d}),
\end{equation}
for every multi-indices $\alpha$ and $\beta$. Any weight
$\omega\in\mascP_{r,\rho}(\rr {2d})$ is then called $\SG$-moderate on
$\rr{2d}$, of order $r$ and $\rho$. Notice that $\mathscr P_{r,\rho}$
is different here compared to \cite{CJT1}, and
there are elements in $ \mathscr P(\rr {2d})$ which have
no equivalent elements in $\mathscr P_{r,\rho}(\rr {2d})$.
On the other hand, if $s, t\in
\mathbf R$ and $r, \rho \in [0,1]$, then $\mathscr P_{r,\rho} (\rr {2d})$
contains all weights of the form
\begin{equation}\label{varthetaWeights}
\vartheta _{m,\mu} (x,\xi )\equiv \eabs x ^m\eabs \xi ^\mu ,
\end{equation}
which are one of the most common type of weights.

\par

It will also be useful to consider $\SG$-moderate weights in one or three
sets of variables. Let $\omega \in \mathscr P(\rr {3d})\bigcap C^\infty (\rr {3d})$,
and let $r_1,r_2,\rho \ge 0$. Then $\omega$ is called $\SG$ moderate on $\rr{3d}$,
of order $r_1$, $r_2$ and $\rho$, if  it fulfills
$$
\eabs {x_1}^{r_1|\alpha _1|} \eabs {x_2}^{r_2|\alpha _2|} \eabs \xi ^{\rho |\beta |}
\frac{\partial _{x_1}^{\alpha _1} \partial _{x_2}^{\alpha _2} \partial _\xi ^\beta \omega (x_1,x_2,\xi )}
       {\omega (x_1,x_2,\xi )}
\in L^\infty(\rr {3d}).
$$
The set of all $\SG$-moderate weights on $\rr{3d}$ of order $r_1$, $r_2$ and $\rho$
is denoted by $\mascP _{r_1,r_2,\rho}(\rr{3d})$. Finally, we denote by $\mascP_{r}(\rr{d})$ the
set of all $\SG$-moderate weights of order $r\ge0$ on $\rr d$, which are defined in a
similar fashion.

\par
\subsection{Modulation spaces}\label{subs:1.2}
Let $\phi \in \mathscr S(\rr d)$. Then the \emph{short-time Fourier transform} of $f\in \mathscr
S(\rr d)$ with respect to (the window function) $\phi$ is defined by
\begin{equation}\label{stftformula}
V_\phi f(x,\xi ) = (2\pi )^{-d/2}\int _{\rr {d}} f(y)\overline {\phi
(y-x)}e^{-i\scal y\xi}\, dy.
\end{equation}
More generallly, the short-time Fourier transform of $f \in \mathscr S'(\rr d)$ with
respect to $\phi  \in \mathscr S'(\rr d)$ is defined by
\begin{equation}\tag*{(\ref{stftformula})$'$}
(V_\phi f) =\mathscr F_2F,\quad \text{where}\quad F(x,y)=(f\otimes
\overline \phi)(y,y-x).
\end{equation}
Here $\mathscr F_2F$ is the partial Fourier transform of
$F(x,y)\in \mathscr S'(\rr{2d})$ with respect to the $y$-variable.
We refer to \cite{Fo, Gro-book} for more facts about the short-time Fourier
transform.
To introduce the modulation spaces, we first recall that a Banach space $\mathscr B$,
continuously embedded in $L^1_{\mathrm{loc}}(\rr d)$,
is called a \emph{(translation) invariant
BF-space on $\rr d$}, with respect to a submultiplicative weight
$v \in \mathscr P(\rr {d})$, if there is a constant $C$
such that the following conditions are fulfilled:
\begin{enumerate}
\item $\mathscr S(\rr d)\subseteq \mathscr
B\subseteq \mathscr S'(\rr d)$ (continuous embeddings);

\vrum

\item if $x\in \rr d$ and $f\in \mathscr B$, then $f(\cdot -x)\in
\mathscr B$, and
\begin{equation}\label{translmultprop1}
\nm {f(\cdot-x)}{\mathscr B}\le Cv(x)\nm {f}{\mathscr B}\text ;
\end{equation}

\vrum

\item if  $f,g\in L^1_{\mathrm{loc}}(\rr d)$ satisfy $g\in \mathscr B$
and $|f| \le |g|$ almost everywhere, then $f\in \mathscr B$ and
$$
\nm f{\mathscr B}\le C\nm g{\mathscr B}\text ;
$$

\vrum

\item if $f\in \mathscr B$ and $\fy\in C^\infty_0(\rr d)$, then $f*\fy\in\mathscr B$, and 
\begin{equation}\label{BFconvEst}
\nm{f*\fy}{\mathscr B}\leq \nm{\fy}{L^1_{(v)}} \nm{f}{\mathscr B}.
\end{equation}
\end{enumerate}

The following definition of modulation spaces is due to Feichtinger \cite{Feichtinger6}.
Let $\mathscr{B}$ be a translation invariant BF-space on
$\rr {2d}$ with respect to $v\in \mathscr P(\rr {2d})$,
$\phi \in \mathscr{S}(\rr{d})\back{0}$ and let $\omega \in
\mathscr{P}(\rr{2d})$ be such that $\omega$ is $v$-moderate.
The \emph{modulation space} $M(\omega ,\mathscr B)$ consists of all $f\in
\mathscr{S}'(\rr{d})$ such that $V_{\phi}f\cdot \omega \in
\mathscr{B}$. We notice that $M(\omega ,\mathscr B)$ is a Banach space
with the norm
\begin{equation}\label{modnorm}
\|f\|_{M(\omega ,\mathscr B)} \equiv \|V_{\phi} f
\omega\|_{\mathscr{B}}
\end{equation}
(cf. \cite{Feichtinger3}).

\par

\begin{rem}
Assume that $p,q\in [1,\infty]$, and let $L^{p,q}_{1}(\rr{2d})$ and
$L^{p,q}_{2}(\rr{2d})$ be the sets of all $F\in  L^1_{\mathrm{loc}}
(\rr{2d})$ such that
\begin{equation*}
\|F\|_{L^{p,q}_1}  \equiv \Big ( \int \Big( \int |F(x,\xi)|^p\,
dx\Big )^{q/p}\,d\xi \Big )^{1/q}
<\infty
\end{equation*}
and
\begin{equation*}
\|F\|_{L^{p,q}_2} \equiv \Big ( \int \Big ( \int |F(x,\xi)|^q\,
d\xi \Big )^{p/q}\, dx\Big )^{1/p}<\infty .
\end{equation*}
Then $M(\omega ,L^{p,q}_1(\rr{2d}))$ is equal to the \emph{classical}
modulation
space $M^{p,q}_{(\omega)}(\rr{d})$, and $M(\omega ,L^{p,q}_2(\rr{2d}))$ is
equal to the space $W^{p,q}_{(\omega)}(\rr{d})$, related to
Wiener-amalgam spaces (cf. \cite{F1,Feichtinger6,Feichtinger3,Gro-book}).
We
set $M^p_{(\omega )} = M^{p,p}_{(\omega
)}= W^{p,p}_{(\omega )}$. Furthermore, if $\omega =1$, then we write
$M^{p,q}$, $M^p$ and $W^{p,q}$ instead of $M^{p,q}_{(\omega )}$,
$M^p_{(\omega )}$ and $W^{p,q}_{(\omega )}$ respectively.
\end{rem}

\par

\begin{rem}\label{modspaceEx}
Several important spaces agree with certain modulation spaces. In fact, let
$s,\sigma \in \mathbf R$. If $\omega =\vartheta_{s,\sigma}$ (cf.
\eqref{varthetaWeights}), then $M^2_{(\omega )}(\rr d)$ is equal to the weighted
Sobolev space (or Sobolev-Kato space) $H^2_{\sigma ,s}(\rr d)$ in \cite{CoMa,Me},
the set of all $f\in \mathscr S'(\rr d)$ such that $\eabs x^s\eabs D^\sigma
f\in L^2(\rr d)$. In particular, if $s=0$ ($\sigma =0$), then
$M^2_{(\omega )}(\rr d)$ equals to $H^2_\sigma (\rr d)$ ($L^2_s(\rr d)$).
Furthermore, if instead $\omega (x,\xi )=\eabs {x,\xi }^s$, then
$M^2_{(\omega )}(\rr d)$ is equal to the
Sobolev-Shubin space of order $s$. (Cf. e.{\,}g.  \cite{LuRa}).
\end{rem}

\par

\subsection{Pseudo-differential operators and $\SG$ symbol classes}\label{subs:1.3}
Let $a\in
\mathscr S(\rr {2d})$, and $t\in \mathbf R$ be fixed. Then
the pseudo-differential operator $\op _t(a)$ is the linear and
continuous operator on $\mathscr S(\rr d)$ defined by the formula
\begin{equation}\label{e0.5}
(\op _t(a)f)(x)
=
(2\pi ) ^{-d}\iint e^{i\scal {x-y}\xi } a((1-t)x+ty,\xi )f(y)\,
dyd\xi 
\end{equation}
(cf. Chapter XVIII in \cite {Ho1}). For
general $a\in \mathscr S'(\rr {2d})$, the pseudo-differential
operator $\op _t(a)$ is defined as the continuous operator from
$\mathscr S(\rr d)$ to $\mathscr S'(\rr d)$ with distribution
kernel
\begin{equation}\label{weylkernel}
K_{t,a}(x,y)=(2\pi )^{-d/2}(\mathscr F_2^{-1}a)((1-t)x+ty,x-y).
\end{equation}
If  $t=0$, then $\op _t(a)$ is the Kohn-Nirenberg
representation $\op (a)=a(x,D)$, and if $t=1/2$, then $\op _t(a)$ is
the Weyl quantization. 

\par

In most of our situations, $a$ belongs to a generalized $\SG$-symbol
class, which we shall consider now.
Let $m,\mu ,r, \rho \in \mathbf R$
be fixed. Then the $\SG$-class $\SG
^{m,\mu }_{r,\rho}(\rr {2d})$ is the set of all $a\in
C^\infty (\rr {2d})$ such that
$$
|D _x^\alpha D _\xi ^\beta a(x,\xi )|\lesssim
\eabs x^{m-r|\alpha|}\eabs \xi ^{\mu -\rho |\beta
|},
$$
for all multi-indices $\alpha$ and $\beta$. Usually we assume that
$r,\rho \ge 0$ and $\rho +r >0$.

\par

More generally, assume that $\omega \in \mathscr P_{r ,\rho}
(\rr {2d})$. Then $\SG _{r,\rho}^{(\omega )}(\rr {2d})$ consists of all $a\in
C^\infty (\rr {2d})$ such that
\begin{equation}\label{Somegadef}
	|D_x^\alpha D_\xi ^\beta a(x,\xi)|\lesssim \omega(x,\xi)\eabs
        x^{-r|\alpha|}\eabs \xi ^{-\rho|\beta|}, \qquad
        x,\xi\in \rr d,
\end{equation}
for all multi-indices $\alpha$ and $\beta$. We notice that
\begin{equation}\label{SG}
\SG _{r,\rho}^{(\omega )}(\rr {2d})=S(\omega ,g_{r,\rho}
),
\end{equation}
when $g=g_{r,\rho}$ is the Riemannian metric on $\rr {2d}$,
defined by the formula
\begin{equation}\label{riemannianmetric}
\big (g_{r,\rho}\big )_{(y,\eta )}(x,\xi ) =\eabs y ^{-2r}|x|^2 +\eabs
\eta ^{-2\rho}|\xi |^2
\end{equation}
(cf. Section 18.4--18.6 in \cite{Ho1}). Furthermore, $\SG ^{(\omega
)}_{r,\rho} =\SG ^{m,\mu }_{r,\rho}$ when $\omega =\vartheta _{m,\mu}$
(see \eqref{varthetaWeights}).

\par

For conveniency we set
\begin{gather*}
\SG ^{(\omega \vartheta _{-\infty ,0} )} _\rho (\rr {2d})
= \SG ^{(\omega \vartheta _{-\infty ,0} )} _{r,\rho} (\rr {2d})
\equiv
\bigcap _{N\ge 0} \SG ^{(\omega \vartheta _{-N ,0} )} _{r,\rho} (\rr {2d}),
\\[1ex]
\SG ^{(\omega \vartheta _{0,-\infty } )} _r (\rr {2d})
= \SG ^{(\omega \vartheta _{0,-\infty } )} _{r,\rho} (\rr {2d})
\equiv
\bigcap _{N\ge 0} \SG ^{(\omega \vartheta _{0,-N } )} _{r,\rho} (\rr {2d}),
\intertext{and}
\SG ^{(\omega \vartheta _{-\infty ,-\infty } )}  (\rr {2d})
= \SG ^{(\omega \vartheta _{-\infty ,-\infty } )} _{r,\rho} (\rr {2d})
\equiv
\bigcap _{N\ge 0} \SG ^{(\omega \vartheta _{-N ,-N} )} _{r,\rho} (\rr {2d}).
\end{gather*}
We observe that $\SG ^{(\omega \vartheta _{-\infty ,0} )} _{r,\rho} (\rr {2d})$
is independent of $r$, $\SG ^{(\omega \vartheta _{0,-\infty } )} _{r,\rho} (\rr {2d})$
is independent of $\rho$, and that
$\SG ^{(\omega \vartheta _{-\infty ,-\infty } )} _{r,\rho} (\rr {2d})$ is independent
of both $r$ and $\rho$. Furthermore, for any $x_0,\xi _0\in \rr d$ we have
\begin{alignat*}{3}
\SG ^{(\omega \vartheta _{-\infty ,0} )} _\rho (\rr {2d}) &=
\SG ^{(\omega _0\vartheta _{-\infty ,0} )} _\rho (\rr {2d}),&
\quad &\text{when}& \quad \omega _0(\xi ) &= \omega (x_0,\xi ),
\\[1ex]
\SG ^{(\omega \vartheta _{0,-\infty } )} _r (\rr {2d}) &=
\SG ^{(\omega _0\vartheta _{0,-\infty } )} _r (\rr {2d}),&
\quad &\text{when}& \quad \omega _0(x ) &= \omega (x,\xi _0),
\intertext{and}
\SG ^{(\omega \vartheta _{-\infty ,-\infty } )}  (\rr {2d}) &=
\mathscr S(\rr {2d}).& & & &
\end{alignat*}

\par

The following result shows that
the concept of asymptotic expansion extends to the classes
$\SG^{(\omega)}_{r,\rho}(\rr{2d})$. We refer to
\cite[Theorem 8]{CoTo} for the proof.

\par

\begin{prop}\label{propasymp}
Let $r,\rho \ge 0$ satisfy $r+\rho >0$, and let
$\{ s_j \} _{j\ge 0}$ and $\{ \sigma _j \} _{j\ge 0}$ be sequences of
non-positive numbers
such that $\lim _{j\to \infty} s_j =-\infty$ when $r>0$ and $s_j=0$ otherwise,
and $\lim _{j\to \infty} \sigma _j =-\infty$ when $\rho >0$ and $\sigma _j=0$
otherwise. Also let $a_j\in\SG ^{(\omega_j)}_{r,\rho}(\rr{2d})$, $j=0,1,\dots$,
where $\omega_j=\omega \cdot \vartheta_{s_j,\sigma_j}$. Then there is a
symbol $a\in\SG^{(\omega)}_{r,\rho}(\rr{2d})$ such that
\begin{equation}
	\label{eq:sgassum}
	a-\sum_{j=0}^Na_j\in\SG^{(\omega_{N+1})}_{r,\rho}(\rr{2d}).
\end{equation}
The symbol $a$ is uniquely determined modulo a remainder $h$, where
\begin{equation}
	\label{eq:gensgasexp}
	\begin{alignedat}{3}
		h &\in \SG ^{\omega \vartheta _{-\infty ,0})}_{\rho}(\rr{2d}) & \quad
		&\text{when}& \quad r&>0,
		\\[1ex]
		h &\in \SG^{(\omega \vartheta _{0,-\infty} )}_{r}(\rr{2d}) & \quad
		&\text{when} & \quad \rho &> 0,
		\\[1ex]  
		h &\in \mathscr S(\rr{2d}) & \quad &\text{when} &\quad r &> 0,
		\rho >0.
	\end{alignedat}
\end{equation}
\end{prop}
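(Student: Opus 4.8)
The plan is to adapt the standard Borel-type summation argument (as in Section~18.1 of \cite{Ho1}) to the $\SG$ setting, where the extra bookkeeping is the anisotropic decay in $x$ and $\xi$. First I would reduce to the case $\omega=1$ (hence $\omega_j=\vartheta_{s_j,\sigma_j}$) by dividing: since $\omega\in\mascP_{r,\rho}(\rr{2d})$ is elliptic and $1/\omega$ again lies in $\mascP_{r,\rho}$, multiplication by $\omega^{-1}$ maps $\SG^{(\omega_j)}_{r,\rho}$ to $\SG^{(\vartheta_{s_j,\sigma_j})}_{r,\rho}$ bijectively, and the remainder classes transform accordingly. Next I would fix a cut-off $\chi\in C^\infty(\rr{2d})$ with $\chi(x,\xi)=0$ for $\eabs x+\eabs\xi\le 1$ and $\chi(x,\xi)=1$ for $\eabs x+\eabs\xi\ge 2$, and set
\begin{equation*}
a=\sum_{j\ge 0}\chi\!\left(\frac{x}{t_j},\frac{\xi}{\tau_j}\right)a_j(x,\xi),
\end{equation*}
with sequences $t_j,\tau_j\to\infty$ chosen so large (depending on $a_0,\dots,a_j$ and finitely many of their derivatives) that the $j$-th term, together with all derivatives up to order $j$, is bounded by $2^{-j}\vartheta_{s_j,\sigma_j}(x,\xi)\cdot\eabs x^{-r|\alpha|}\eabs\xi^{-\rho|\beta|}$ on the support of the cut-off. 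The point is that on $\operatorname{supp}\chi(\cdot/t_j,\cdot/\tau_j)$ one has $\eabs x\gtrsim t_j$ or $\eabs\xi\gtrsim\tau_j$, and $s_j,\sigma_j\le 0$, so raising $t_j,\tau_j$ forces an arbitrarily small prefactor; this is exactly where the hypotheses $s_j\to-\infty$ (when $r>0$) and $\sigma_j\to-\infty$ (when $\rho>0$) are used, while in the degenerate directions the $s_j$ or $\sigma_j$ are identically $0$ and no decay is gained or needed there.

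With this choice the series converges in $C^\infty$ and I would verify $a\in\SG^{(1)}_{r,\rho}$ and the asymptotic property \eqref{eq:sgassum}: for a fixed $N$, split the tail $a-\sum_{j=0}^N a_j$ into the pieces $\sum_{j>N}\chi(\cdot/t_j,\cdot/\tau_j)a_j$, which is controlled termwise by the $2^{-j}$ estimates and lands in $\SG^{(\vartheta_{s_{N+1},\sigma_{N+1}})}_{r,\rho}$ since $s_j\le s_{N+1}$, $\sigma_j\le\sigma_{N+1}$ for $j>N$ (after, if necessary, passing to a monotone rearrangement), plus the finitely many low terms $\sum_{j=0}^N\bigl(\chi(\cdot/t_j,\cdot/\tau_j)-1\bigr)a_j$, each of which is compactly supported hence in $\mathscr S(\rr{2d})\subseteq\SG^{(\vartheta_{s_{N+1},\sigma_{N+1}})}_{r,\rho}$. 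Estimating the derivatives of $\chi(x/t_j,\xi/\tau_j)$ is harmless because each $x$-derivative produces a factor $t_j^{-1}\lesssim\eabs x^{-1}$ on the support (and similarly in $\xi$), which is compatible with the required $\eabs x^{-r|\alpha|}\eabs\xi^{-\rho|\beta|}$ gain since $r,\rho\le$ the orders appearing; I would state this as a routine Leibniz computation.

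Finally, for the uniqueness statement I would take the difference $h$ of two symbols with the same asymptotic expansion; then $h\in\SG^{(\vartheta_{s_N,\sigma_N})}_{r,\rho}$ for every $N$, so $h\in\bigcap_N\SG^{(\vartheta_{s_N,\sigma_N})}_{r,\rho}$. When $r>0$ the $s_N\to-\infty$ while the $\sigma_N$ may stagnate, giving $h\in\bigcap_N\SG^{(\vartheta_{-N,\sigma})}_{r,\rho}=\SG^{(\vartheta_{-\infty,0})}_\rho$ after absorbing the bounded $\vartheta_{0,\sigma}$ factor; symmetrically when $\rho>0$; and when both $r,\rho>0$ both sequences diverge, forcing $h\in\mathscr S(\rr{2d})$ by the identification $\SG^{(\vartheta_{-\infty,-\infty})}=\mathscr S(\rr{2d})$ recorded just above. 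The main obstacle I anticipate is purely organizational rather than deep: making the choice of $t_j,\tau_j$ rigorous requires a careful induction in which, at step $j$, one picks $t_j,\tau_j$ large enough to beat the $2^{-j}$ bound for all multi-indices $|\alpha|,|\beta|\le j$ simultaneously, and then one must check that the resulting double series and all its termwise-differentiated series converge locally uniformly — a standard but slightly delicate diagonal argument, complicated here only by the need to track the two independent decay scales and to handle uniformly the degenerate cases $r=0$ or $\rho=0$ in which one of the cut-off scales can be frozen.
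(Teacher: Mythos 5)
Your overall route (reduce to $\omega=1$, then run a Borel-type excision sum as in Section 18.1 of \cite{Ho1}) is a legitimate, more self-contained alternative to the paper, which simply invokes \cite[Theorem 8]{CoTo}, i.e.\ the asymptotic-expansion machinery for general Weyl--H{\"o}rmander classes $S(m,g)$; and in the non-degenerate case $r>0$, $\rho>0$ your construction does go through. The genuine gap is in the degenerate cases $r=0$ or $\rho=0$, which the proposition explicitly covers: there your key smallness claim is false. Take $\rho=0$, so $\sigma_j\equiv 0$ and (after the reduction) $a_j\in\SG^{(\vartheta_{s_j,0})}_{r,0}$, for instance $a_j(x,\xi)=\eabs x^{s_j}$. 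The support of $\chi(x/t_j,\xi/\tau_j)$ always contains a region of the form $\{|x|\le C,\ |\xi|\ge \tau_j\}$, and on that region $|\chi_j a_j|\asymp 1$ while $2^{-j}\vartheta_{s_j,\sigma_j}(x,\xi)\asymp 2^{-j}$: no choice of $t_j,\tau_j$, however large, yields the bound you require, since neither $\eabs x^{s_j}$ nor $\eabs \xi^{\sigma_j}$ is small there. Worse, at a fixed $x_0$ with $|\xi|\to\infty$ the number of active terms grows without bound, so with the example above $a(x_0,\xi)=\sum_{\tau_j\lesssim|\xi|}\eabs{x_0}^{s_j}$ is unbounded and $a\notin\SG^{0,0}_{r,0}$. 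Your suggested remedy of ``freezing'' one cut-off scale does not help: with $\tau_j\equiv\tau$ the offending region $\{|x|\le C,\ |\xi|\ge 2\tau\}$ lies inside the support of \emph{every} $\chi_j$. The correct fix is to drop the excision in the degenerate variable altogether, i.e.\ use $\chi(x/t_j)$ when $\rho=0$ and $\chi(\xi/\tau_j)$ when $r=0$; the low terms $(1-\chi(x/t_j))a_j$ are then no longer compactly supported, but they lie in $\SG^{(\vartheta_{-\infty,0})}_{\rho}(\rr{2d})$, which suffices and is exactly consistent with the weaker uniqueness classes in \eqref{eq:gensgasexp}.

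Two further points. The cut-off as written is vacuous, since $\eabs x+\eabs\xi\ge 2$ everywhere; you mean Euclidean norms (minor). More substantively, ``passing to a monotone rearrangement'' is not legitimate: permuting the $a_j$ changes the partial sums $\sum_{j\le N}a_j$ in \eqref{eq:sgassum}, so it proves a different statement. For non-monotone sequences you should instead compare each tail $\sum_{j>N}\chi_j a_j$ with $\vartheta_{\tilde s_{N+1},\tilde\sigma_{N+1}}$, where $\tilde s_{N+1}=\max_{j>N}s_j$ and $\tilde\sigma_{N+1}=\max_{j>N}\sigma_j$ (finite since the sequences tend to $-\infty$), and note that \eqref{eq:sgassum} as literally stated can even fail for strongly non-monotone data (e.g.\ $\rho=0$, $s_j=0,-10,-1,-20,\dots$, $a_2=\eabs x^{-1}$, all other $a_j=0$), so the intended hypothesis -- and the one under which the cited \cite[Theorem 8]{CoTo} is proved -- is a decreasing hierarchy of weights; your write-up should make that reduction explicit rather than appeal to rearrangement.
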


\par

\begin{defn}\label{def:gensgasexp}
The notation $a\sim \sum a_j$ is used when $a$ and $a_j$
fulfill the hypothesis in Proposition \ref{propasymp}. Furthermore,
the formal sum
$$
\sum_{j\ge0}a_j
$$
is called an \emph{asymptotic expansion}.
\end{defn}

\par

It is a well-known fact that $\SG$-operators give rise to linear continuous mappings 
from $\mathscr S(\rr d)$ to itself, extendable as linear continuous mappings
from $\mathscr S'(\rr d)$ to itself. They also act continuously between modulation
spaces. Indeed,  if $a\in \SG ^{(\omega _0)}_{r,\rho} (\rr {2d})$, then $\op _t(a)$
is continuous from $M(\omega ,\mathscr B)$ to $M(\omega /\omega
_0,\mathscr B)$ (cf. \cite{CJT2}). Moreover, 
there exist $a\in \SG ^{(\omega _0)}_{r,\rho} (\rr {2d})$ and $b\in \SG
^{(1/\omega _0)}_{r,\rho} (\rr {2d})$ such that for every choice of $\omega
\in \mathscr P(\rr {2d})$ and every translation invariant BF-space $\mathscr
B$ on $\rr {2d}$, the mappings
\begin{gather*}
\op _t(a)\, : \, \mathscr S(\rr d)\to \mathscr S(\rr d),\quad \op _t(a)\, : \,
\mathscr S'(\rr d)\to \mathscr S'(\rr d)
\\[1ex]
\text{and}\quad \op _t(a)\, : \, M(\omega ,\mathscr B)\to M(\omega
/\omega _0,\mathscr B).
\end{gather*}
are continuous bijections with inverses $\op _t(b)$.

\par

\subsection{Composition and further properties of $\SG$ classes of
symbols, amplitudes, and functions}\label{subs:1.4}
We define families of \emph{smooth functions with $\SG$ behaviour}, depending on
one, two or three sets of real variables (cfr. also \cite{CoSch}). We then introduce
pseudo-differential operators defined by means of $\SG$ amplitudes. Subsequently,
we recall sufficient conditions for maps of $\rr{d}$ into itself
to keep the invariance of the $\SG$ classes. 

%
%

\par

In analogy of $\SG$ amplitudes defined on $\rr {2d}$, we consider
corresponding classes of amplitudes defined on $\rr {3d}$. More precisely,
for any $m_1, m_2, \mu, r_1,r_2,\rho \in \mathbf R$,
let $\SG ^{m_1,m_2,\mu }_{r_1,r_2,\rho} (\rr{3n})$ be the set of all
$a \in C^\infty \left( \rr{3d} \right )$ such that
\begin{equation}
\label{eq:1.1}       
                  |\partial ^{\alpha _1}_{x_1}
                  \partial ^{\alpha _2}_{x_2}
                   \partial _\xi ^\beta
                  a(x_1, x_2, \xi)|               
                 \lesssim
                   \eabs{x_1}^{m_1 - r_1|\alpha _1|}
                  \eabs{x_2}^{m_2 - r_2|\alpha _2|} \eabs{\xi}^{\mu  - \rho |\beta |},
\end{equation}
for every multi-indices $ \alpha_1, \alpha_2, \beta$. We usually assume
$r_1,r_2,\rho \ge 0$ and $r_1+r_2+\rho>0$. More generally,  let
$\omega \in \mascP _{r_1,r_2,\rho}(\rr{3d})$. Then
$\SG ^{(\omega )}_{r_1,r_2,\rho} (\rr{3d})$ is the  set of all
$a \in C^\infty \left( \rr{3d} \right)$ which satisfy
\begin{equation}
\tag*{(\ref{eq:1.1})$'$}
                   |\partial ^{\alpha _1}_{x_1}
                  \partial ^{\alpha _2}_{x_2}
                   \partial _\xi ^\beta
                  a(x, y, \xi)|               
                 \lesssim
                   \omega(x_1,x_2,\xi )\eabs{x_1}^{- r_1|\alpha _1|}
                  \eabs{x_2}^{- r_2|\alpha _2|} \eabs{\xi}^{- \rho |\beta |},
\end{equation}
for every multi-indices $ \alpha_1, \alpha_2, \beta$. The set
$\SG ^{ (\omega) }_{r_1,r_2,\rho} (\rr{3n})$ is equipped with
the usual Fr\'{e}chet topology based upon the seminorms
implicit in \eqref{eq:1.1}$'$. 

\par

As above,
$$
\SG ^{(\omega)}_{r_1,r_2,\rho} =\SG ^{m_1,m_2,\mu }_{r_1,r_2,\rho}
\quad \text{when}\quad
\omega (x_1,x_2,\xi )=\eabs
{x_1}^{m_1}\eabs {x_2}^{m_2}\eabs \xi ^\mu .
$$

\par

\begin{defn}
\label{def:psidoamp}
Let $r_1,r_2,\rho\ge0$, $r_1+r_2+\rho>0$, and
let $a\in \SG^{(\omega)}_{r_1,r_2,\rho}(\rr{3d})$, where $\omega \in
\mascP_{r_1,r_2,\rho}(\rr{3d})$. Then, the pseudo-differential operator
$\op(a)$ is the linear and continuous operator from $\cS(\rr d)$ to
$\cS^\prime(\rr d)$ with distribution kernel
\[
	K_{a}(x,y)=(2\pi )^{-d/2}(\mathscr F_3^{-1}a)(x,y,x-y).
\]
For $f\in\cS(\rr d)$, we have
\[
	(\op(a)f)(x)=(2\pi)^{-d}\iint e^{i \scal {x-y}\xi} a(x,y,\xi)f(y)\,dyd\xi.
\]
\end{defn}
The operators introduced in Definition \ref{def:psidoamp} have properties
analogous to the usual $\SG$ operator families described in \cite{Co}.
They coincide with the operators defined in the previous subsection,
where corresponding symbols are obtained by means of asymptotic
expansions, modulo remainders of
the type given in \eqref{def:gensgasexp}.
For the sake of brevity, we here omit the details. Evidently, when
neither the amplitude functions $a$, nor the corresponding weight $\omega$,
depend on $x_2$, we obtain the definition of
$\SG$ symbols and pseudo-differential operators, given in the
previous subsection.

\par

Next we consider \emph{$\SG$ functions}, also called
\emph{functions with $\SG$ behavior}. That is, amplitudes which
depend only on one set of variables in $\rr d$. We denote them
by $\SG ^{(\omega )}_r(\rr d)$ and $\SG^{m}_r(\rr d)$, $r>0$,
respectively, for a general weight $\omega\in\mascP_{r}(\rr{d})$
and for $\omega(x)=\eabs x^m$. Furthermore, if
$\phi \colon \rr {d_1} \to \rr {d_2}$,
and each component $\phi _j$, $j=1, \dots, d_2$, of $\phi$ belongs to
$\SG^{(\omega)}_r(\rr {d_1})$, we will occasionally write $\phi \in
\SG^{(\omega)} _r(\rr {d_1};\rr {d_2})$. We use similar notation
also for other vector-valued $\SG$ symbols and amplitudes.

\par

In the sequel we will need to consider compositions of $\SG$
amplitudes with functions with $\SG$ behavior. In particular,
the latter will often be $\SG$ maps (or diffeomorphisms) with
$\SG^0$-parameter dependence, generated by phase
functions (introduced in \cite{coriasco}), 
see Definitions \ref{def:sgmap} and \ref{def:sgmap}, and
Subsection \ref{subs:2.1} below. For the convenience of the
reader, we first recall, in a form slightly more general than the 
one adopted in \cite{coriasco}, the definition $\SG$
diffeomorphisms with $\SG^0$-parameter dependence.

\par

\begin{defn}\label{def:sgmap}
Let $\Omega _j \subseteq \rr{d_j}$ be open, $\Omega =
\Omega _1\times \cdots \times \Omega _k$ and let
$\phi \in C^\infty (\rr d\times \Omega ;\rr d)$. Then $\phi$ is
called an $\SG$ map (with $\SG
^0$-parameter dependence) when the following conditions hold:
\begin{enumerate}
	\item \label{cond:1}
	$\norm{\phi(x,\eta )}\asymp \norm {x}$, 
	uniformly with respect to $\eta \in \Omega$;
	\item for all $\alpha \in \zz {d}_+$, $\beta =
	(\beta _1,\dots ,\beta _k)$, $\beta _j \in \zz{d_j}_+$,
	$j=1,\dots, k$,
	and any $(x,\eta )\in \rr{d} \times \Omega$,
	\[
		|\partial ^\alpha _x\partial ^{\beta _1}_{\eta _1}\cdots  \partial
		^{\beta _k}_{\eta _k}\phi(x,\eta )|
		\lesssim
		\norm{x}^{1-|\alpha |}\norm{\eta _1}^{-|\beta _1|}\cdots
		\norm{\eta _k}^{-|\beta _k|},
	\]
where $\eta =(\eta _1,\dots ,\eta _k)$ and $\eta _j\in \Omega _j$ for every
$j$.
\end{enumerate}
\end{defn}

\par

\begin{defn}\label{def:sgdiffeo}
Let $\phi \in C^\infty (\rr{d}\times \Omega ;\rr d )$ be an $\SG$
map. Then $\phi$ is called an $\SG$
diffeomorphism (with $\SG^0$-parameter dependence) when
there is a constant $\varepsilon>0$ such that
\begin{equation}\label{eq:sgreg}
	|\det \phi ^\prime _x(x,\eta )|\ge\varepsilon,
\end{equation}
uniformly with respect to $\eta \in \Omega$.
\end{defn}
\begin{rem}
	Condition (\ref{cond:1}) in Definition \ref{def:sgmap} and
	\eqref{eq:sgreg}, together with abstract results
	(see, e.g., \cite{Berger}, page 221) and the inverse function
	theorem, imply that,
	for any $\eta \in \Omega$, an $\SG$ diffeomorphism
	$\phi(\cdo ,\eta )$ is a smooth, global bijection from
	$\rr {d}$ to itself with smooth inverse $\psi(\cdo ,\eta )
	=\phi ^{-1}(\cdo ,\eta )$.
	It can be proved that also the inverse mapping
	$\psi (y,\eta )=\phi ^{-1}(y,\eta )$ fulfills Conditions
	(1) and (2) in Definition \ref{def:sgmap}, 
	as well as \eqref{eq:sgreg}, see \cite{coriasco}.
\end{rem}

\par

\begin{defn}\label{def:omegainv}
	Let $r,\rho \ge 0$, $r+\rho >0$, $\omega \in \mascP_{r,\rho}
	(\rr{2d})$, and let $\phi ,\phi _1,\phi _2\in C^\infty (\rr{d}\times
	\rr{d_0};\rr d)$
	be $\SG$ mappings.
	\begin{enumerate}
		\item $\omega$ is called \emph{$(\phi,1)$-invariant}
		when
		\[
			\omega(\phi(x,\eta _1+\eta _2),\xi )\lesssim
			\omega
			(\phi(x,\eta _1),\xi ),
		\]
		for any $x, \xi \in \rr{d}$, $\eta _1,\eta _2\in \rr{d_0}$, uniformly with
		respect to $\eta _2\in \rr{d_0}$. The set of all $(\phi,1)$-invariant
		weights in $\mascP _{r,\rho}(\rr{2d})$ is denoted by
		$\mascP_{r,\rho}^{\phi,1}(\rr{2d})$;
		\item $\omega$ is called \emph{$(\phi,2)$-invariant}
		when
		\[
			\omega(x,\phi(\xi ,\eta _1+\eta _2))\lesssim
			\omega
			(x,\phi(\xi ,\eta _1)),
		\]
		for any $x, \xi \in \rr{d}$, $\eta _1,\eta _2\in \rr{d_0}$, uniformly with
		respect to $\eta _2\in \rr{d_0}$. The set of all $(\phi,2)$-invariant
		weights in $\mascP _{r,\rho}(\rr{2d})$ is denoted by
		$\mascP_{r,\rho}^{\phi,2}(\rr{2d})$;
		\item $\omega$ is called \emph{$(\phi _1,\phi _2)$-invariant}
		if $\omega$ is both $(\phi _1,1)$-invariant and $(\phi _2,2)$-invariant.
		The set of all $(\phi _1,\phi _2)$-invariant
		weights in $\mascP _{r,\rho}(\rr{2d})$ is denoted by
		$\mascP_{r,\rho}^{(\phi_1,\phi _2)}(\rr{2d})$
	\end{enumerate}
\end{defn}

\par

The next Lemma \ref{lemma:omegainv}, proved in \cite{CoTo2}, shows that, under mild additional conditions, 
the families of weights introduced in Subsection 
\ref{subs:1.1} are indeed ``invariant'' under composition
with $\SG$ maps with $\SG^0$-parameter dependence.
That is, the compositions introduced in Definition
\ref{def:omegainv} are still weight functions in the sense
of Subsection \ref{subs:1.1}, belonging to suitable sets
$\mathscr{P}_{r,\rho}(\rr{2d})$.

\par

\begin{lemma}\label{lemma:omegainv}
	Let $r,\rho\in[0,1]$, $r+\rho>0$, $\omega\in\mascP
	_{r,\rho}(\rr{2d})$, and let 
	$\phi\colon\rr{d}\times\rr{d}\to\rr{d}$
	be an $\SG$ map as in Definition \ref{def:sgmap}.
	The following statements hold true.
	\begin{enumerate}
		\item Assume $\omega\in\mascP_{1,\rho}^{\phi,1}(\rr{2d})$, 
		and set $\omega_1(x,\xi):=\omega(\phi(x,\xi),\xi)$.
		Then, $\omega_1\in\mascP_{1,\rho}(\rr{2d})$.
		\item Assume $\omega\in\mascP_{r,1}^{\phi,2}(\rr{2d})$, 
		and set $\omega_2(x,\xi):=\omega(x,\phi(\xi,x))$.
		Then, $\omega_2\in\mascP_{r,1}(\rr{2d})$.
	\end{enumerate}
\end{lemma}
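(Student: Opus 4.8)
The plan is to verify the symbol estimate \eqref{SGCondWeight} for $\omega_1(x,\xi)=\omega(\phi(x,\xi),\xi)$ directly, by differentiating the composition and controlling each term using (i) the $\SG$-map estimates in Definition \ref{def:sgmap}, (ii) the $\SG$-moderation of $\omega$ expressed via \eqref{SGCondWeight} with orders $1$ and $\rho$, and (iii) the $(\phi,1)$-invariance, which is needed precisely to transfer a bound of the form $\omega(\phi(x,\xi),\xi)$-type weights back to $\omega_1$. First I would record the chain-rule expansion: for multi-indices $\alpha,\beta$,
$$
\partial_x^\alpha\partial_\xi^\beta\bigl(\omega(\phi(x,\xi),\xi)\bigr)
$$
is a finite linear combination of terms of the shape
$$
(\partial_y^{\gamma}\partial_\xi^{\delta}\omega)(\phi(x,\xi),\xi)\cdot
\prod_{k} \partial_x^{\alpha_k}\partial_\xi^{\beta_k}\phi_{j_k}(x,\xi),
$$
by the Fa\`a di Bruno formula, where $|\gamma|=$ (number of $\phi$-factors), $\sum\alpha_k=\alpha$, and $\delta$ together with the $\beta_k$'s sum to $\beta$ (with $\delta$ collecting the derivatives falling directly on the second slot of $\omega$).

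Next I would estimate a generic such term. Using Definition \ref{def:sgmap}\,(\ref{cond:1})--(2), each factor $\partial_x^{\alpha_k}\partial_\xi^{\beta_k}\phi_{j_k}$ is bounded by $\eabs{x}^{1-|\alpha_k|}\eabs{\xi}^{-|\beta_k|}$ when $|\alpha_k|+|\beta_k|\ge 1$, and $\eabs{\phi(x,\xi)}\asymp\eabs{x}$ when the factor is $\phi$ itself (no derivative); multiplying across all $|\gamma|$ factors yields a bound $\eabs{x}^{|\gamma|-|\alpha|}\eabs{\xi}^{-(|\beta|-|\delta|)}$, where one uses that the exponents of $\eabs{x}$ add up to $|\gamma|-|\alpha|$ and those of $\eabs{\xi}$ to $-(|\beta|-|\delta|)$, since $\sum|\alpha_k|=|\alpha|$ and $\sum|\beta_k|=|\beta|-|\delta|$. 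For the weight factor, I apply \eqref{SGCondWeight} for $\omega\in\mascP_{1,\rho}(\rr{2d})$ at the point $(\phi(x,\xi),\xi)$:
$$
|(\partial_y^\gamma\partial_\xi^\delta\omega)(\phi(x,\xi),\xi)|
\lesssim \omega(\phi(x,\xi),\xi)\,\eabs{\phi(x,\xi)}^{-|\gamma|}\eabs{\xi}^{-\rho|\delta|}
\asymp \omega(\phi(x,\xi),\xi)\,\eabs{x}^{-|\gamma|}\eabs{\xi}^{-\rho|\delta|}.
$$
Combining, the generic term is bounded by
$$
\omega(\phi(x,\xi),\xi)\,\eabs{x}^{-|\alpha|}\eabs{\xi}^{-|\beta|+|\delta|-\rho|\delta|}
=\omega_1(x,\xi)\,\eabs{x}^{-|\alpha|}\eabs{\xi}^{-|\beta|+(1-\rho)|\delta|}.
$$
Since $\rho\le 1$ the exponent $(1-\rho)|\delta|$ is nonnegative, so this is \emph{not yet} the desired bound $\omega_1\eabs{x}^{-|\alpha|}\eabs{\xi}^{-\rho|\beta|}$; the loss is exactly $\eabs{\xi}^{(1-\rho)|\delta|}$ coming from $\xi$-derivatives that hit the first argument of $\omega$ through $\phi$.

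The main obstacle is therefore to absorb this $\eabs{\xi}$-loss, and this is where the $(\phi,1)$-invariance must do the work. The point is that a $\xi$-derivative on $\phi(x,\xi)$ produces a factor gaining $\eabs{\xi}^{-1}$ in size but moving the evaluation point of $\omega$; heuristically, writing $\phi(x,\xi+\eta)\approx\phi(x,\xi)+\phi'_\xi(x,\xi)\cdot\eta$ with $|\phi'_\xi(x,\xi)\cdot\eta|\lesssim \eabs{x}\eabs{\xi}^{-1}|\eta|$, one sees that changing $\xi$ by $O(\eabs{\xi})$ changes $\phi$ by $O(\eabs{x})$, the full size of $\phi$; the hypothesis $\omega(\phi(x,\eta_1+\eta_2),\xi)\lesssim\omega(\phi(x,\eta_1),\xi)$ is tailored to guarantee that such a displacement does not increase $\omega$ beyond a constant. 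Concretely, I expect to reduce to estimating $\omega_1(x,\xi+\eta)/\omega_1(x,\xi)$ for $|\eta|\lesssim\eabs{\xi}$ and to bound it by a constant using $(\phi,1)$-invariance together with the ordinary $\eabs{\xi}^{-\rho}$-moderation in the second slot, a Taylor/Hadamard-type argument along the segment from $\xi$ to $\xi+\eta$. Once the $\SG_{1,\rho}$-moderation of $\omega_1$ is established in this way (i.e.\ $\omega_1(x+z,\xi+\eta)\lesssim\omega_1(x,\xi)\,v(z,\eta)$ for a suitable polynomial $v$, together with smoothness and the ellipticity/derivative bounds just derived), membership $\omega_1\in\mascP_{1,\rho}(\rr{2d})$ follows. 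Statement (2) is entirely symmetric: one sets $\omega_2(x,\xi)=\omega(x,\phi(\xi,x))$, swaps the roles of $x$ and $\xi$ and of the orders $r$ and $1$, uses $(\phi,2)$-invariance in place of $(\phi,1)$-invariance, and repeats verbatim the chain-rule estimate and the loss-absorption step, now with the $\eabs{x}$-loss from $x$-derivatives hitting the second argument of $\omega$.
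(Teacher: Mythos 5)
The paper itself does not give a proof of this lemma (it defers to \cite{CoTo2}), so I am judging your argument on its own merits. Your overall strategy -- verify \eqref{SGCondWeight} for $\omega_1$ by Fa\`a di Bruno, using $\eabs{\phi(x,\xi)}\asymp\eabs{x}$ and the order-$1$ decay of $\omega$ in its first slot -- is the right one, but you misread your own exponent bookkeeping. The generic term is bounded by $\omega_1(x,\xi)\,\eabs{x}^{-|\alpha|}\eabs{\xi}^{-|\beta|+(1-\rho)|\delta|}$, and since $|\delta|\le|\beta|$ and $\rho\le 1$ one has $-|\beta|+(1-\rho)|\delta|\le -\rho|\beta|$; as $\eabs{\xi}\ge 1$ this is already the required estimate $\lesssim\omega_1\eabs{x}^{-|\alpha|}\eabs{\xi}^{-\rho|\beta|}$ (the worst case $\delta=\beta$ gives exactly $-\rho|\beta|$). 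There is no ``loss'' to absorb, and this is precisely where the hypothesis $\omega\in\mascP_{1,\rho}$ (order $1$, not $r$, in the first slot) is used: a $\xi$-derivative hitting $\phi$ costs $\eabs{x}\eabs{\xi}^{-1}$ and is exactly compensated by the $\eabs{x}^{-1}$ gain of the corresponding first-slot derivative of $\omega$. Consequently the $(\phi,1)$-invariance plays no role in the derivative estimates, and your proposed ``absorption'' step is aimed at a non-existent problem; moreover, as stated it does not work: a bound on $\omega_1(x,\xi+\eta)/\omega_1(x,\xi)$ cannot repair a pointwise derivative estimate, and the claim that this ratio is bounded by a constant for $|\eta|\lesssim\eabs{\xi}$ is false in general for $\rho<1$ (already for $\omega=\vartheta_{0,\sigma}$ with $\sigma<0$ one only gets a polynomial factor in $\eta$, not a constant).

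Where the invariance hypothesis is genuinely indispensable is in the other half of membership in $\mascP_{1,\rho}$, namely that $\omega_1$ is a polynomial-type weight, i.e.\ satisfies \eqref{moderate}: shifting $\xi$ to $\xi+\eta$ displaces the first argument $\phi(x,\xi)$ by an amount of order $\eabs{x}$, which the $v$-moderateness of $\omega$ cannot control by a function of $\eta$ alone. You mention the moderateness target at the end, but the mechanism you sketch (Taylor along the segment, restricted to $|\eta|\lesssim\eabs{\xi}$) is neither needed nor sufficient; the correct use is direct: $\omega_1(x+z,\xi+\eta)=\omega(\phi(x+z,\xi+\eta),\xi+\eta)\lesssim\omega(\phi(x,\xi+\eta),\xi+\eta)(1+|z|)^{N}$ because $|\phi(x+z,\xi+\eta)-\phi(x,\xi+\eta)|\lesssim|z|$ by the $\SG$-map bounds, then $\lesssim\omega(\phi(x,\xi+\eta),\xi)\,v(\eta)$ by $v$-moderateness in the second slot, and finally $\lesssim\omega(\phi(x,\xi),\xi)\,v(\eta)=\omega_1(x,\xi)v(\eta)$ by the $(\phi,1)$-invariance applied with $\eta_1=\xi$, $\eta_2=\eta$ (no restriction on $\eta$). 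With that repair, and the same argument with the roles of $x$ and $\xi$ (and of $r$ and $1$) exchanged together with $(\phi,2)$-invariance for part (2), your proof is complete and coincides with the intended one.
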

\begin{rem}
\label{rem:3.2}
It is obvious that, when dealing with Fourier integral operators,
the requirements for $\phi$ and $\omega$ in Lemma \ref{lemma:omegainv}
need to be  satisfied only on the support of the involved amplitude.
By Lemma \ref{lemma:omegainv}, 
it also follows that if $a \in \SG^{(\omega)}_{1,1}(\rr{2d})$ and
$\phi=(\phi_1,\phi_2)$, where
$\phi_1\in  \SG^{1,0}_{1,1}(\rr{2d})$ and $\phi_2\in  \SG^{0,1}_{1,1}(\rr{2d})$
are $\SG$ maps with $\SG^0$ parameter dependence, 
then $a\circ \phi \in \SG^{({\omega _0})}_{1,1}(\rr{2d})$ when
${\omega _0}:=\omega \circ \phi$,
provided $\omega$ is $(\phi _1,\phi _2)$-invariant. Similar results
hold for $\SG$ amplitudes and weights defined on $\rr{3d}$.
\end{rem}

\par

\begin{rem}\label{rem:trivweight}
	By the definitions it follows that any weight
	$\omega=\vartheta_{s,\sigma}$, $s,\sigma\in\mathbf R$,
	 is $(\phi,1)$-,
	$(\phi,2)$-, and $(\phi_1,\phi_2)$-invariant with respect to any
	$\SG$ diffeomorphism 
	with $\SG^0$ parameter dependence $\phi$, $(\phi_1,\phi_2)$. 
\end{rem}

\par

\section{Symbolic calculus for generalised FIOs of $\SG$ type}
\label{sec2}
%

We here recall the class of Fourier integral operators we are interested in,
generalizing those studied in \cite{coriasco}. The corresponding symbolic
calculus has been obtained in \cite{CoTo2}, from which we recall the results
listed below, and to which we refer the reader for the details. A key tool in the proofs of the composition theorems below are the results on asymptotic expansions in the Weyl-H\"ormander calculus obtained in \cite{CoTo}.

%
%
%

\subsection{Phase functions of $\SG$ type}\label{subs:2.1}
We recall the definition of the class of admissible phase functions in 
the $\SG$ context, as it was given in \cite{coriasco}. We then observe
that the subclass of \emph{regular phase functions} generates
(parameter-dependent) mappings of $\rr d$ onto itself, which turn
out to be $\SG$ maps with $\SG^0$ parameter-dependence.
Finally, we define some \emph{regularizing operators},
which are used to prove the properties of the $\SG$ Fourier integral
operators introduced in the next subsection.

\par

\begin{defn}\label{def:2.1}
A real-valued function $\varphi \in \SG^{1,1}_{1,1}(\rr {2d})$
is called a \emph{simple phase function} (or \emph{simple phase}), if
\begin{equation}
\label{eq:2.0}
\norm{\varphi_{\xi}^\prime(x,\xi)} \asymp \norm{x}
       \mbox{ and }   
 \norm{\varphi_{x}^\prime(x,\xi)} \asymp \norm{\xi},
\end{equation} 
are fulfilled, uniformly with respect to $\xi$ and $x$, repectively. 
The set of all simple phase functions is denoted by $\Ph$.
Moreover, the simple phase function $\varphi$ is called \emph{regular},
if $\left|\det (\varphi^{\prime\prime}_{x \xi} (x,\xi) ) \right| \ge c$ for some $c>0$
and all $x,\xi \in \rr d$. The set of all regular phases is denoted by $\Phr$.
\end{defn}

\par

We observe that a regular phase function $\varphi$
defines two globally invertible mappings,
namely $\xi \mapsto  \varphi^\prime_x(x,\xi)$ and $x \mapsto  \varphi
^\prime_\xi (x,\xi)$, see the analysis in \cite{coriasco}. 
Then, the following result holds true for the mappings $\phi_1$ and $\phi_2$
generated by the first derivatives of the admissible regular phase functions.

\par

\begin{prop}
\label{prop:3.2}
Let $\varphi \in \Ph$. Then, for any $x_0,\xi _0\in \rr d$,
$\phi_1\colon\rr{d}\to\rr{d}\colon x \mapsto  \varphi^\prime_\xi (x,\xi _0)$ and
$\phi_2\colon\rr{d}\to\rr{d}\colon \xi \mapsto  \varphi^\prime_x(x_0,\xi)$ 
are $\SG$ maps (with $\SG^0$ parameter dependence), from $\rr{d}$ to itself. 
If $\varphi\in\Phr$, $\phi_1$ and $\phi_2$ give rise to $\SG$ diffeomorphism 
with $\SG^{0}$ parameter dependence.
\end{prop}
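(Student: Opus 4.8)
The plan is to verify, directly from Definition~\ref{def:sgmap}, that $\phi_1(x)=\varphi'_\xi(x,\xi_0)$ and $\phi_2(\xi)=\varphi'_x(x_0,\xi)$ satisfy Conditions (1) and (2) for each fixed $\xi_0$, respectively $x_0$, with $\SG^0$-parameter dependence in the remaining frozen variable. I will treat $\phi_1$ in detail; the argument for $\phi_2$ is symmetric after exchanging the roles of $x$ and $\xi$. First, Condition (1): since $\varphi\in\Ph$, the defining relation \eqref{eq:2.0} gives $\norm{\varphi'_\xi(x,\xi)}\asymp\norm{x}$ uniformly in $\xi$; freezing $\xi=\xi_0$ yields $\norm{\phi_1(x)}\asymp\norm{x}$, and the uniformity in $\xi$ is exactly what furnishes the required uniformity of the parameter dependence. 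Second, Condition (2): we must estimate $\partial_x^\alpha\partial_{\xi_0}^\beta\phi_1(x,\xi_0)=\partial_x^\alpha\partial_\xi^\beta\varphi'_\xi(x,\xi_0)$. Since $\varphi\in\SG^{1,1}_{1,1}(\rr{2d})$, one derivative in $\xi$ applied to $\varphi$ lands $\varphi'_\xi$ in $\SG^{1,0}_{1,1}$, i.e.
\[
|\partial_x^\alpha\partial_\xi^{\beta}\varphi'_\xi(x,\xi)|
= |\partial_x^\alpha\partial_\xi^{\beta+e_j}\varphi(x,\xi)|
\lesssim \norm{x}^{1-|\alpha|}\norm{\xi}^{0-|\beta|}
= \norm{x}^{1-|\alpha|}\norm{\xi}^{-|\beta|},
\]
which is precisely the estimate in Definition~\ref{def:sgmap}(2) with one parameter block $\eta_1=\xi$; evaluating at $\xi=\xi_0$ preserves the bound, uniformly in $\xi_0$. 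This establishes that $\phi_1$ (and, symmetrically, $\phi_2$) is an $\SG$ map with $\SG^0$-parameter dependence.

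For the second assertion, assume in addition $\varphi\in\Phr$, so that $|\det(\varphi''_{x\xi}(x,\xi))|\ge c>0$ for all $x,\xi$. Observe that $(\phi_1)'_x(x,\xi_0)=\varphi''_{\xi x}(x,\xi_0)$ and $(\phi_2)'_\xi(x_0,\xi)=\varphi''_{x\xi}(x_0,\xi)$; since $\det\varphi''_{\xi x}=\det\varphi''_{x\xi}$ (these Hessian blocks are transposes of each other), the regularity hypothesis gives $|\det(\phi_1)'_x(x,\xi_0)|\ge c$ and $|\det(\phi_2)'_\xi(x_0,\xi)|\ge c$, uniformly in the frozen parameter. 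This is exactly condition \eqref{eq:sgreg} in Definition~\ref{def:sgdiffeo}, so $\phi_1$ and $\phi_2$ are $\SG$ diffeomorphisms with $\SG^0$-parameter dependence; in particular, by the remark following Definition~\ref{def:sgdiffeo}, each is a global diffeomorphism of $\rr d$ for every value of the parameter, with inverse again satisfying Conditions (1), (2) and \eqref{eq:sgreg}.

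The routine part is the bookkeeping of indices in the $\SG$ estimates, which is immediate once one notes that differentiating an $\SG^{1,1}_{1,1}$ symbol once in $\xi$ (resp.\ in $x$) gives an $\SG^{1,0}_{1,1}$ (resp.\ $\SG^{0,1}_{1,1}$) symbol. The only genuine point requiring the structural hypotheses is the verification of Condition (1) and of the non-degeneracy \eqref{eq:sgreg}: Condition (1) rests on the phase condition \eqref{eq:2.0}, and \eqref{eq:sgreg} rests on the regularity condition of Definition~\ref{def:2.1} together with the elementary identity $\det\varphi''_{x\xi}=\det\varphi''_{\xi x}$. I do not anticipate a serious obstacle; the main care is to keep the uniformity in the frozen variable explicit throughout, since that uniformity is what encodes the $\SG^0$-parameter dependence and is used in the subsequent composition theorems of Section~\ref{sec2}.
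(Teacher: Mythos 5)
Your verification is correct and is exactly the expected argument: the paper states this proposition without proof (referring to the analysis in \cite{coriasco}), and the intended justification is precisely the direct check you give, namely that \eqref{eq:2.0} yields Condition (1) of Definition \ref{def:sgmap}, that one derivative of an $\SG^{1,1}_{1,1}$ phase gives the $\SG^{1,0}_{1,1}$ (resp.\ $\SG^{0,1}_{1,1}$) estimates of Condition (2), and that $\det\varphi''_{\xi x}=\det\varphi''_{x\xi}$ transfers the regularity bound into \eqref{eq:sgreg}. No gaps; your explicit tracking of uniformity in the frozen variable is the right emphasis.
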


\par

For any $\varphi \in \Ph$, the operators $\Theta _{1,\fy}$ and $\Theta _{2,\fy}$
are defined by
$$
(\Theta _{1,\fy}f)(x,\xi ) \equiv f(\fy '_\xi (x,\xi ), \xi)
\quad \text{and}\quad
(\Theta _{2,\fy}f)(x,\xi ) \equiv f(x, \fy '_x(x,\xi )),
$$
when $f\in C^1(\rr {2d})$, and remark that
the modified weights
\begin{equation}\label{omegaVarphiDef}
(\Theta _{1,\fy}\omega )  (x,\xi ) = \omega (\fy '_\xi (x,\xi ), \xi)
\quad \text{and}\quad
(\Theta _{2,\fy}\omega ) (x,\xi ) = \omega (x, \fy '_x(x,\xi )), 
\end{equation}
will appear frequently in the sequel. In the following lemma we
show that these weights belong to the same classes of weights
as $\omega$, provided they additionally fulfill
\begin{equation}\label{WeightPhaseCond}
\Theta _{1,\fy}\omega  \asymp \Theta _{2,\fy}\omega 
\end{equation}
when $\fy$ is the involved phase function. That is,
\eqref{WeightPhaseCond} is a sufficient condition to obtain
$(\phi_1,1)$- and/or $(\phi_2,2)$-invariance of $\omega$
in the sense of Definition \ref{def:omegainv}, depending on the values of
the parameters $r,\rho\ge0$.

\par

\begin{lemma}
Let $\fy$ be a simple phase on $\rr {2d}$, $r,\rho \in [0,1]$ be such that
$r=1$ or $\rho =1$, and let $\Theta _{j,\fy}\omega$, $j=1,2$, be as in
\eqref{omegaVarphiDef}, where $\omega \in \mascP
_{r,\rho}(\rr {2d})$ satisfies \eqref{WeightPhaseCond}.
Then
\begin{equation*}
	\Theta _{j,\fy}\omega \in \mascP _{r,\rho}(\rr {2d}),\quad j=1,2.
\end{equation*}
\end{lemma}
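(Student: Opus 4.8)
The plan is to reduce the statement to the invariance lemma for $\SG$ maps (Lemma~\ref{lemma:omegainv}), using the fact, recorded in Proposition~\ref{prop:3.2}, that a simple phase $\fy$ generates the $\SG$ maps $\phi_1\colon x\mapsto\fy'_\xi(x,\xi)$ and $\phi_2\colon\xi\mapsto\fy'_x(x,\xi)$ with $\SG^0$ parameter dependence. First I would observe that $\Theta_{1,\fy}\omega(x,\xi)=\omega(\phi_1(x,\xi),\xi)$ and $\Theta_{2,\fy}\omega(x,\xi)=\omega(x,\phi_2(\xi,x))$, so that the two modified weights are exactly the compositions $\omega_1,\omega_2$ appearing in Lemma~\ref{lemma:omegainv}, parts (1) and (2), respectively. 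The only thing that must be checked before invoking that lemma is that $\omega$ lies in the correct invariance class, namely $\mascP_{1,\rho}^{\phi_1,1}(\rr{2d})$ for $j=1$ (which needs $r=1$) and $\mascP_{r,1}^{\phi_2,2}(\rr{2d})$ for $j=2$ (which needs $\rho=1$). By hypothesis at least one of $r=1$ or $\rho=1$ holds; I would treat the two cases in parallel, so it suffices to verify the relevant invariance property in each case.

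The core of the argument is therefore the following claim: if $\fy$ is a simple phase, $\omega\in\mascP_{r,\rho}(\rr{2d})$ and \eqref{WeightPhaseCond} holds, then $\omega$ is $(\phi_1,1)$-invariant when $r=1$ and $(\phi_2,2)$-invariant when $\rho=1$. For the $(\phi_1,1)$-invariance I need the estimate $\omega(\phi_1(x,\xi_1+\xi_2),\xi)\lesssim\omega(\phi_1(x,\xi_1),\xi)$ uniformly in $\xi_2$; the natural route is to write $\phi_1(x,\xi_1+\xi_2)-\phi_1(x,\xi_1)=\int_0^1\partial_\xi\fy'_\xi(x,\xi_1+t\xi_2)\,\xi_2\,dt$, bound the integrand using $\fy\in\SG^{1,1}_{1,1}$ (so $|\fy''_{\xi\xi}(x,\eta)|\lesssim\eabs x\eabs\eta^{-1}$), and combine this with the moderateness of $\omega$ and the relation $\eabs{\phi_1(x,\eta)}\asymp\eabs x$ from Definition~\ref{def:sgmap}(1). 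The point is that the displacement in the first slot is controlled by $\eabs x$ times something bounded, so $\omega$-moderateness in a polynomial weight $v$ absorbs it; condition \eqref{WeightPhaseCond} is what lets one pass from control in terms of $\fy'_\xi$ to control in terms of $\fy'_x$ (and hence in terms of $\xi$), which is exactly the symmetry one needs when it is $\rho$ rather than $r$ that equals $1$. The $(\phi_2,2)$-invariance is handled symmetrically, exchanging the roles of $x$ and $\xi$ and of the first and second components.

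Once the invariance membership is established, the proof concludes in one line: apply Lemma~\ref{lemma:omegainv}(1) (with $r=1$) to get $\Theta_{1,\fy}\omega=\omega_1\in\mascP_{1,\rho}(\rr{2d})=\mascP_{r,\rho}(\rr{2d})$, and Lemma~\ref{lemma:omegainv}(2) (with $\rho=1$) to get $\Theta_{2,\fy}\omega=\omega_2\in\mascP_{r,1}(\rr{2d})=\mascP_{r,\rho}(\rr{2d})$; the hypothesis ``$r=1$ or $\rho=1$'' guarantees that for each $j$ at least one of the two lemmas is applicable, and \eqref{WeightPhaseCond} then gives $\Theta_{1,\fy}\omega\asymp\Theta_{2,\fy}\omega$, so both modified weights lie in the stated class regardless of which of $r,\rho$ equals $1$. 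I expect the main obstacle to be the bookkeeping in the first slot of the composition when only $\rho=1$ (so the $x$-order $r$ may be strictly less than $1$): there one cannot directly feed $\phi_1$ into Lemma~\ref{lemma:omegainv}(1), and one must instead exploit \eqref{WeightPhaseCond} to rewrite $\Theta_{1,\fy}\omega$ in terms of $\Theta_{2,\fy}\omega$ before applying part (2). Making that interchange rigorous — i.e. checking that the equivalence \eqref{WeightPhaseCond} together with the $\SG^0$-parameter dependence of $\phi_1,\phi_2$ really does transfer the differential inequalities \eqref{SGCondWeight} from one modified weight to the other — is the delicate step; the rest is a routine application of the chain rule, Faà di Bruno, and the bounds built into Definitions~\ref{def:sgmap} and~\ref{def:2.1}.
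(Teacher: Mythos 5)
Your overall architecture --- rewriting $\Theta_{1,\fy}\omega(x,\xi)=\omega(\phi_1(x,\xi),\xi)$ and $\Theta_{2,\fy}\omega(x,\xi)=\omega(x,\phi_2(\xi,x))$ with the $\SG$ maps of Proposition \ref{prop:3.2} and feeding them into Lemma \ref{lemma:omegainv} --- is exactly the mechanism the paper itself points to in the paragraph preceding the lemma (the paper gives no proof here, deferring to \cite{CoTo2}), so the choice of route is reasonable. The first place where your argument is not yet a proof is the verification of the invariance hypothesis: the displacement $\phi_1(x,\xi_1+\xi_2)-\phi_1(x,\xi_1)$ is only $O(\eabs x)$, and $v$-moderateness with polynomial $v$ then yields $\omega(\phi_1(x,\xi_1+\xi_2),\xi)\lesssim\omega(\phi_1(x,\xi_1),\xi)\,\eabs x^{N}$, a polynomial loss, not the uniform bound required in Definition \ref{def:omegainv}. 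What actually gives $(\phi_1,1)$-invariance is the $r=1$ estimate in \eqref{SGCondWeight}: both points lie in the region $\eabs y\asymp\eabs x$, and integrating $|\nabla_y\log\omega(y,\xi)|\lesssim\eabs y^{-1}$ along a path staying in that region (radial segments plus an arc for large $\eabs x$, moderateness for bounded $\eabs x$) gives an $O(1)$ variation of $\log\omega$. In particular \eqref{WeightPhaseCond} plays no role in this step, and, contrary to what you suggest, it cannot create the missing invariance in the unmatched case (e.g. $(\phi_1,1)$-invariance when only $\rho=1$): it is a pointwise equivalence of the two composed weights, not a displacement estimate.

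Second, and more seriously, your concluding transfer step fails: membership in $\mascP_{r,\rho}$ is defined by the derivative estimates \eqref{SGCondWeight}, and these are not inherited under the pointwise equivalence $\Theta_{1,\fy}\omega\asymp\Theta_{2,\fy}\omega$. Concretely, if only $r=1$, differentiating $\Theta_{2,\fy}\omega(x,\xi)=\omega(x,\fy'_x(x,\xi))$ in $x$ produces the term $(\partial_\xi\omega)(x,\fy'_x(x,\xi))\,\fy''_{xx}(x,\xi)$, which is only $O\bigl(\Theta_{2,\fy}\omega\,\eabs x^{-1}\eabs\xi^{1-\rho}\bigr)$ since $|\fy''_{xx}|\lesssim\eabs x^{-1}\eabs\xi$ for a simple phase; for $\rho<1$ this is not $O(\Theta_{2,\fy}\omega\,\eabs x^{-r})$, and one can choose a bounded $\omega\in\mascP_{1,\rho}$ (so that \eqref{WeightPhaseCond} holds trivially) together with a simple phase having $|\fy''_{xx}|\asymp\eabs x^{-1}\eabs\xi$ on large sets, for which the estimate genuinely fails. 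So along your route you obtain literal membership only for the index matched to the parameter equal to one ($\Theta_{1,\fy}\omega\in\mascP_{1,\rho}$ when $r=1$, $\Theta_{2,\fy}\omega\in\mascP_{r,1}$ when $\rho=1$), and, via \eqref{WeightPhaseCond}, only \emph{equivalence} to such a weight for the other index --- which is what matters for the associated symbol classes, but is not the transfer of the differential inequalities you assert. You yourself flag this as ``the delicate step'' and leave it unproved; it cannot be proved with the tools you cite, so you must either make the up-to-equivalence reading explicit or supply a genuinely different argument for the unmatched composition.
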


\par

Here and in what follows we let
$$
^{t}a(x,\xi)=a(\xi,x)\quad  \text{and} \quad (a^*)(x,\xi)
=\overline{a(\xi,x)},
$$
when $a(x,\xi)$ is a function.

\par

\subsection{Generalised Fourier integral operators of $\SG$
type}\label{subs:2.2}
In analogy with the definition of generalized $\SG$
pseudo-differential operators, recalled in Subsection
\ref{subs:1.1}, we define the class of Fourier integral
operators we are interested in terms of their distributional
kernels. These belong to a class of tempered
oscillatory integrals, studied in \cite{CoSch}. Thereafter we
prove that they posses convenient mapping properties.

\par

\begin{defn}\label{def:sgfios}
Let $\omega \in \mascP_{r,\rho}(\rr{2d})$ satisfy
\eqref{WeightPhaseCond}, $r,\rho\ge0$, $r+\rho>0$,
$\varphi\in\Ph$, $a,b\in\SG^{(\omega)}_{r,\rho}(\rr{2d})$.
\begin{enumerate}
\item The generalized Fourier integral operator $A=\op _\fy (a)$ of
\emph{$\SG$ type I} (\emph{$\SG$ FIOs of type I}) with phase $\varphi$
and amplitude $a$ is the linear continuous operator from $\cS(\rr d)$ to
$\cS^\prime(\rr d)$ with distribution kernel $K_A\in
\cS^\prime(\rr {2d})$ given by
\[
	K_A(x,y)=(2\pi)^{-d/2}(\cF_2(e^{i\fy}a))(x,y)\text ;
\]

\vrum

\item The generalized Fourier integral operator $B=\op _\fy ^*(b)$ of
\emph{$\SG$ type II} (\emph{$\SG$ FIOs of type II}) with phase
$\varphi$ and amplitude $b$ is the linear continuous operator
from $\cS(\rr d)$ to $\cS^\prime(\rr d)$ with distribution kernel
$K_B\in \cS^ \prime(\rr {2d})$ given by
\[
	K_B(x,y)=(2\pi)^{-d/2}(\cF^{-1}_2(e^{-i\fy }\overline b))(y,x).
\]
\end{enumerate}
\end{defn}

\par

Evidently, if $f \in \cS(\rr d)$, and $A$ and $B$ are the operators in Definition
\ref{def:sgfios}, then
\begin{align}
Af(x) &= \op_\varphi(a)u(x) = (2\pi)^{-d/2}\int e^{i \varphi(x, \xi)}
\, a(x, \xi) \, ({\cF{f}})(\xi)\,d \xi,\label{eq:0.1}
\intertext{and}
Bf(x) &= \op^*_\varphi(b)u(x)\notag
\\[1ex]
 &=(2\pi)^{-d} \iint e^{i(\langle x, \xi)- \varphi(y, \xi))}
                            \, \overline{b(y, \xi)} \, f(y) \,dy d\xi.\label{eq:0.1.0}
\end{align}

\par

\begin{rem}\label{rem:sgsymm}
In the sequel the formal ($L^2$-)adjoint of an operator $Q$ is denoted by $Q^*$.
By straightforward computations it follows that the $\SG$ type I
and $\SG$ type II operators are formal adjoints to each others, provided the
amplitudes and phase functions are the same. That is, if $b$ and $\varphi$
are the same as in Definition \ref{def:sgfios}, then
$\op^*_\varphi(b)=\op_\varphi(b)^*$.

\par

Obviously, for any $\omega \in \mascP_{r,\rho}(\rr {2d})$,
$^{t}\omega=\omega^*$ is also an admissible weight which belongs to
$\mascP_{\rho,r}(\rr {2d})$. Similarly, for arbitrary
$\varphi \in \Ph$ and $a \in \SG^{(\omega)}_{r,\rho}(\rr {2d})$, we have
$^{t}\varphi=\varphi ^*\in\Ph$ and $^{t}a, a^*\in\SG ^{(\omega ^*)} _{\rho,r}(\rr {2d})$.
Furthermore, by Definition \ref{def:sgfios} we get
\begin{equation}\label{eq:typeI-II}
\begin{gathered}
\op_\varphi^*(b) =  \cF^{-1} \circ \op_{- \varphi ^*}(b^*) \circ \cF^{-1}
\\[1ex]
\Longleftrightarrow
\\[1ex]
\op_{\varphi}(a) =  \cF \circ \op _{- \varphi ^*}^*(a^*) \circ \cF.
\end{gathered}
\end{equation}
\end{rem}

\par

The following result shows that type I and type II operators
are linear and continuous from $\cS (\rr d)$ to itself, and
extendable to linear and continuous operators from
$\cS^\prime(\rr d)$ to itself.

\par

\begin{thm}
\label{thm:2.1}
Let $a$, $b$ and $\varphi$ be the same as in Definition \ref{def:sgfios}.
Then $\op _\fy (a)$ and $\op _\fy ^*(b)$ are linear and continuous
operators on $\cS (\rr d)$, and uniquely extendable to linear and
continuous operators on $\cS '(\rr d)$.
\end{thm}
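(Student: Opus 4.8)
\textbf{Proof plan for Theorem \ref{thm:2.1}.} The plan is to reduce everything to a single continuity statement on $\cS(\rr d)$, then use duality and the adjoint relation from Remark \ref{rem:sgsymm} for the rest. First I would treat $\op_\fy(a)$ directly from the formula \eqref{eq:0.1}. For $f\in\cS(\rr d)$ the integral $\int e^{i\fy(x,\xi)}a(x,\xi)\widehat f(\xi)\,d\xi$ is absolutely convergent (since $\widehat f$ is Schwartz and $a$ has polynomial growth in $\xi$, controlled by $\omega$ which lies in $\mascP_{r,\rho}$), so $\op_\fy(a)f$ is at least a well-defined continuous function. To show it is Schwartz and that $f\mapsto\op_\fy(a)f$ is continuous on $\cS$, I would produce, for each pair of multi-indices, a bound of the form $\norm{x}^{M}|\partial_x^\gamma(\op_\fy(a)f)(x)|\lesssim \sum_{\text{finite}} p_j(f)$ with $p_j$ Schwartz seminorms. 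The standard device is integration by parts in $\xi$ using the operator $L = \norm{\xi}^{-2}(1 - \Delta_\xi)$-type regularization adapted to the phase: because $\norm{\fy'_\xi(x,\xi)}\asymp\norm{x}$ by \eqref{eq:2.0}, the transpose of the first-order operator $\langle\fy'_\xi,D_\xi\rangle/|\fy'_\xi|^2$ gains a factor $\norm{x}^{-1}$ at each application, which is exactly what converts growth in $x$ into decay; simultaneously, differentiation in $x$ of $e^{i\fy}a$ produces factors $\fy'_x$ bounded by $\norm{\xi}$ (again by \eqref{eq:2.0}), absorbed by the decay of $\widehat f$. Here I would invoke that $\fy\in\SG^{1,1}_{1,1}$ and $a\in\SG^{(\omega)}_{r,\rho}$ to control all the symbol seminorms that appear, and the regular-phase / $\SG$-map structure (Proposition \ref{prop:3.2}) is not even needed for this qualitative statement — only the $\asymp$ estimates \eqref{eq:2.0} and the $\SG$ bounds matter.

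Once continuity of $\op_\fy(a)\colon\cS\to\cS$ is in hand, I would obtain the type II operator for free: by Remark \ref{rem:sgsymm}, $\op_\fy^*(b) = \op_\fy(b)^*$, and a formal adjoint of a continuous operator on $\cS$ is automatically continuous from $\cS'$ to $\cS'$; to get continuity $\cS\to\cS$ for the type II operator as well, I would instead apply the already-proven type I result to the conjugated operator using \eqref{eq:typeI-II}, i.e.\ $\op_\fy^*(b) = \cF^{-1}\circ\op_{-\fy^*}(b^*)\circ\cF^{-1}$, noting that $-\fy^*\in\Ph$ and $b^*\in\SG^{(\omega^*)}_{\rho,r}$ with $\omega^*\in\mascP_{\rho,r}$, so $\op_{-\fy^*}(b^*)$ is covered by the type I argument, and $\cF^{\pm1}$ are topological isomorphisms of $\cS$. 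This gives continuity of both operators on $\cS(\rr d)$.

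Finally, for the extension to $\cS'(\rr d)$: I would define $\op_\fy(a)$ on $\cS'$ by transposition against $\op_\fy^*(\,\overline{\,\cdot\,}\,)$-type pairings — more precisely, for $u\in\cS'$ and $f\in\cS$ set $\langle\op_\fy(a)u,f\rangle := \langle u,\,{}^t(\op_\fy(a))f\rangle$, where ${}^t(\op_\fy(a))$ is the transpose, which by the same relations \eqref{eq:typeI-II}–\eqref{eq:eq:typeI-II} is again a $\SG$ FIO (of type II, up to the transposition $a\mapsto{}^ta$) and hence continuous on $\cS$ by the previous step; the right-hand side is then a continuous linear functional of $f$, so $\op_\fy(a)u\in\cS'$, and continuity in $u$ for the weak-$*$ topology is immediate. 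Uniqueness of the extension follows from density of $\cS$ in $\cS'$ together with the fact that on $\cS$ the extended operator agrees with the original (which one checks from the kernel representation $K_A = (2\pi)^{-d/2}\cF_2(e^{i\fy}a)\in\cS'(\rr{2d})$, already asserted to be tempered in Definition \ref{def:sgfios}, so that $\op_\fy(a)$ is a priori well-defined $\cS\to\cS'$ and the two definitions are consistent). The main obstacle is the first step — organizing the integration-by-parts bookkeeping so that every application of the phase-adapted operator genuinely trades one power of $\norm{x}$ for decay while keeping all the resulting amplitude factors inside $\SG$ classes; the nondegeneracy $|\det\fy''_{x\xi}|\ge c$ plays no role here and the estimate is purely a consequence of \eqref{eq:2.0} and the $\SG^{1,1}_{1,1}$ membership of $\fy$, but the combinatorics of which derivatives land on $e^{i\fy}$ versus on $a$ versus on $\widehat f$ must be tracked carefully.
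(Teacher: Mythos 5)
Your proposal is correct and follows the standard route: phase-adapted integration by parts based on \eqref{eq:2.0} (with the usual $1+|\fy '_\xi|^2$-type regularization of the denominator, since $\fy '_\xi$ itself may vanish where $|x|$ is small) gives the Schwartz seminorm estimates for $\op _\fy (a)$, the type II case then follows from \eqref{eq:typeI-II} since $\cF ^{\pm 1}$ are isomorphisms of $\cS (\rr d)$, and the extension to $\cS '(\rr d)$ is obtained by transposition, with uniqueness from density. This is essentially the argument of the sources to which the paper defers this proof (\cite{CoTo2}, cf. also \cite{coriasco, CoSch}), so apart from the bookkeeping you already flag there is nothing substantive to add.
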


\subsection{Composition with pseudo-differential operators of $\SG\!$ type.}
\label{subs:2.3}
The composition theorems presented in this and the subsequent subsections
are variants of those originally appeared in \cite{coriasco}. 
The notation used in the statements of the composition theorems are those
introduced in Subsections \ref{subs:1.3}, \ref{subs:2.1} and \ref{subs:2.2}.
The proofs and more details can be found in \cite{CoTo2}.

\begin{thm}
\label{thm:0.1}
Let $r_j,\rho _j\in [0,1]$, $\varphi \in \Ph$ and let $\omega _j
\in \mascP_{r_j,\rho _j}(\rr {2d})$, $j=0,1,2$, be such
that
$$
\rho _2=1,
\quad r_0=\min\{r_1,r_2,1\} ,\quad \rho _0=\min\{ \rho_1,1\},
\quad \omega _0 =\omega_1\cdot (\Theta _{2,\fy}\omega _2),
$$
and $\omega _2\in \mathscr{P}_{r,1}(\rr{2d})$ is
$(\phi,2)$-invariant with respect to $\phi\colon
\xi\mapsto\varphi^\prime_x(x,\xi)$.
Also let $a \in \SG^{(\omega _1)} _{r_1,\rho_1}(\rr {2d})$,
$p \in \SG^{(\omega _2)}_{r_2,1}(\rr {2d})$, and let
\begin{equation}
\label{eq:0.4}
\psi(x,y,\xi) = \varphi(y,\xi) - \varphi(x,\xi) -
\scal{ y - x}{\varphi '_x(x,\xi)}.
\end{equation}
Then
\begin{alignat*}{2}
\op(p) \circ \op_\varphi(a) &= \op_{\varphi}(c) \operatorname{Mod}
\op _\varphi (\SG ^{(\omega \vartheta _{0,-\infty })}_0 ),& \quad r_1=0,
\\[1ex]
\op(p) \circ \op_\varphi(a) &= \op_{\varphi}(c) \operatorname{Mod}
\op (\mathscr S ),& \quad r_1>0,
\end{alignat*}
where $c \in \SG^{(\omega _0)}_{r_0,\rho _0}(\rr {2d})$
admits the asymptotic expansion
\begin{equation}
\label{eq:0.3}
c(x,\xi) \sim \sum_{\alpha} \frac{i^{|\alpha|}}{\alpha!} 
(D^\alpha_\xi p)(x, \varphi '_x(x,\xi))
\,D^\alpha_y \!\!\left[ e^{i \psi(x,y,\xi)} a(y,\xi) \right]_{y=x}.
\end{equation}
\end{thm}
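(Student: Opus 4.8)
\textbf{Proof strategy for Theorem \ref{thm:0.1}.}
The plan is to compute the composition $\op(p)\circ\op_\varphi(a)$ at the level of distributional kernels (or, equivalently, as an oscillatory integral acting on $f\in\cS(\rr d)$), reduce it to an oscillatory integral with phase $\varphi(x,\xi)$ and a new amplitude depending on the extra variable $y$, and then apply the amplitude-to-symbol reduction supplied by Definition \ref{def:psidoamp} and Proposition \ref{propasymp}. Concretely, for $f\in\cS(\rr d)$ I would write $\op_\varphi(a)f$ as in \eqref{eq:0.1} and apply $\op(p)=\op(p)(x,D)$, yielding
\begin{equation*}
(\op(p)\circ\op_\varphi(a)f)(x)=(2\pi)^{-3d/2}\iiint e^{i(\scal{x-z}{\eta}+\varphi(z,\xi))}p(x,\eta)a(z,\xi)(\cF f)(\xi)\,dz\,d\eta\,d\xi .
\end{equation*}
The key algebraic step is to insert the Taylor-type identity
\begin{equation*}
\varphi(z,\xi)=\varphi(x,\xi)+\scal{z-x}{\varphi'_x(x,\xi)}+\psi(x,z,\xi),
\end{equation*}
with $\psi$ as in \eqref{eq:0.4}, so that $e^{i\varphi(z,\xi)}=e^{i\varphi(x,\xi)}e^{i\scal{z-x}{\varphi'_x(x,\xi)}}e^{i\psi(x,z,\xi)}$; the linear term $e^{i\scal{z-x}{\varphi'_x(x,\xi)}}$ combines with $e^{i\scal{x-z}{\eta}}$ so that the $\eta$-integration becomes a (distributional) Fourier transform forcing $\eta=\varphi'_x(x,\xi)$ after a stationary-phase/change-of-variables argument. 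This produces the phase $\varphi(x,\xi)$ and an amplitude $b(x,z,\xi)=p(x,\varphi'_x(x,\xi))\,e^{i\psi(x,z,\xi)}a(z,\xi)$ (up to lower-order corrections coming from the non-stationary remainder of the $\eta$-integral, which by the non-degeneracy $|\det\varphi''_{x\xi}|\geq c$ and $\rho_2=1$ lie in the asserted residual classes $\op_\varphi(\SG^{(\omega\vartheta_{0,-\infty})}_0)$ when $r_1=0$, or $\op(\mathscr S)$ when $r_1>0$).

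Next I would verify that $b$ is indeed an $\SG$ amplitude on $\rr{3d}$: here $\psi\in\SG^{1,1,1}_{1,1,1}$ vanishes to second order on the diagonal $z=x$ (in fact $\psi,\psi'_z$ both vanish at $z=x$), the factor $p(x,\varphi'_x(x,\xi))$ lies in $\SG^{(\Theta_{2,\fy}\omega_2)}_{r_2,1}$ by $(\phi,2)$-invariance of $\omega_2$ together with Lemma \ref{lemma:omegainv} and Proposition \ref{prop:3.2}, and $a(z,\xi)\in\SG^{(\omega_1)}_{r_1,\rho_1}$; thus $b\in\SG^{(\omega_0)}_{r_0,\rho_0}(\rr{3d})$ with $\omega_0=\omega_1\cdot(\Theta_{2,\fy}\omega_2)$ and the stated reduced orders $r_0=\min\{r_1,r_2,1\}$, $\rho_0=\min\{\rho_1,1\}$, the minima arising because differentiating $e^{i\psi}$ produces factors of $\psi$-derivatives that degrade the decay rates. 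Then I invoke the amplitude-reduction result after Definition \ref{def:psidoamp}: $\op_\varphi$ applied to the $\rr{3d}$-amplitude $b$ coincides, modulo residuals of the type in \eqref{eq:gensgasexp}, with $\op_\varphi(c)$ for a symbol $c\in\SG^{(\omega_0)}_{r_0,\rho_0}(\rr{2d})$, obtained by the standard stationary-phase expansion in $z$ around $z=x$; carrying out that expansion for the amplitude $e^{i\psi(x,z,\xi)}a(z,\xi)$ (with $p(x,\varphi'_x(x,\xi))$ as a $z$-independent factor) yields exactly \eqref{eq:0.3}, since the $\alpha$-th term is $\frac{i^{|\alpha|}}{\alpha!}(D^\alpha_\xi p)(x,\varphi'_x(x,\xi))\,D^\alpha_z[e^{i\psi}a]_{z=x}$ — note the $D^\alpha_\xi$ on $p$ appears because the oscillatory factor $e^{i\scal{z-x}{\varphi'_x(x,\xi)}}$ contributes $D_z\mapsto$ differentiation hitting the $\varphi'_x(x,\xi)$-slot of $p$.

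The main obstacle I expect is the rigorous justification of the $\eta$-integration and the passage from the triple integral to the $\rr{3d}$-amplitude representation: the integrals are only oscillatory, so one must regularize (e.g. with cutoffs $\chi(\eta/R)$ or by integration by parts in $z$ using $\eabs{\varphi'_x(x,\xi)-\eta}^{-2}$), track that the error terms genuinely land in $\SG^{(\mathscr S)}$ or in $\op_\varphi(\SG^{(\omega\vartheta_{0,-\infty})}_0)$ depending on whether $r_1>0$ or $r_1=0$, and control everything uniformly so that the resulting amplitude estimates hold on all of $\rr{3d}$ — this is precisely where the hypotheses $\rho_2=1$, the non-degeneracy of $\varphi''_{x\xi}$, the asymptotics $\eabs{\varphi'_\xi}\asymp\eabs x$, $\eabs{\varphi'_x}\asymp\eabs\xi$, and the $(\phi,2)$-invariance of $\omega_2$ are all needed. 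Since the full technical details of these oscillatory-integral manipulations and the amplitude calculus on $\rr{3d}$ are carried out in \cite{CoTo2} and \cite{CoTo}, I would, for the present exposition, reduce to those references after setting up the kernel computation and identifying the amplitude $b$ and the asymptotic expansion \eqref{eq:0.3}.
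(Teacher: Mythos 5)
Your outline is correct and follows exactly the route the paper indicates: the paper itself gives no proof of Theorem \ref{thm:0.1} but defers it to \cite{CoTo2} (with the generalized asymptotic expansions of \cite{CoTo} as the key tool), and your reduction — Taylor-expanding the phase to produce $\psi$, forcing $\eta=\varphi'_x(x,\xi)$ in the oscillatory integral, passing to an amplitude on $\rr{3d}$ whose class and weight $\omega_1\cdot(\Theta_{2,\fy}\omega_2)$ are controlled via Proposition \ref{prop:3.2}, Lemma \ref{lemma:omegainv} and the $(\phi,2)$-invariance of $\omega_2$, and then invoking the amplitude-to-symbol reduction and asymptotic expansion machinery — is precisely that argument. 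Since you, like the paper, ultimately defer the rigorous oscillatory-integral manipulations to \cite{CoTo2} and \cite{CoTo}, your proposal matches the intended proof in both structure and level of detail.
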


\begin{thm}
\label{thm:3.1}
Let $r_j,\rho _j\in [0,1]$, $\varphi \in \Ph$ and let $\omega _j
\in \mascP_{r_j,\rho _j}(\rr {2d})$, $j=0,1,2$, be such
that
$$
r_2=1,
\quad r_0=\min\{r_1,1\} ,\quad \rho _0=\min\{ \rho_1,\rho _2,1\},
\quad \omega _0 =\omega_1\cdot (\Theta _{1,\fy} \omega _2),
$$
and $\omega _2\in \mathscr{P}_{r,1}(\rr{2d})$ is
$(\phi,1)$-invariant with respect to $\phi \colon
x \mapsto \varphi^\prime_\xi (x,\xi)$.
Also let $a \in \SG^{(\omega _1)} _{r_1,\rho _1}(\rr {2d})$ and
$p \in \SG^{(\omega _2)}_{1,\rho _2}(\rr {2d})$.
Then
\begin{alignat*}{2}
\op _\varphi(a) \circ \op (p) &= \op_{\varphi}(c) \operatorname{Mod}
\op _\varphi (\SG ^{(\omega \vartheta _{-\infty ,0})}_0 ),& \quad \rho _1=0,
\\[1ex]
\op_\varphi(a) \circ \op (p) &= \op_{\varphi}(c) \operatorname{Mod}
\op (\mathscr S ),& \quad \rho _1>0,
\end{alignat*}
where the transpose ${^t}c$ of $c \in \SG^{(\omega _0)}
_{r_0,\rho _0}(\rr {2d})$ admits the asymptotic expansion
\eqref{eq:0.3}, after $p$ and $a$ have been replaced by
${^t}p$ and ${^t}a$, respectively.
\end{thm}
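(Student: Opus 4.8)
\textbf{Proof proposal for Theorem \ref{thm:3.1}.}
The plan is to reduce the statement to the already-established Theorem \ref{thm:0.1} by means of the duality identities \eqref{eq:typeI-II} in Remark \ref{rem:sgsymm}. First I would take formal adjoints. From \eqref{eq:typeI-II} we have $\op_\varphi(a)=\cF\circ\op^*_{-\varphi^*}(a^*)\circ\cF$, and since $\op(p)^*=\op(p^\ast)$ modulo the symmetry of the $\SG$ calculus, the composition $\op_\varphi(a)\circ\op(p)$ is, up to the intertwining Fourier transforms, the adjoint of $\op(p^\ast)\circ\op_{-\varphi^\ast}(a^\ast)$ (here one uses $(\op(p)\circ\op_\varphi(a))^\ast=\op_\varphi(a)^\ast\circ\op(p)^\ast=\op^\ast_\varphi(a)\circ\op(p^\ast)$ and the analogous rewriting of the left-hand side). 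Concretely, I would verify the operator identity
\begin{equation*}
\op_\varphi(a)\circ\op(p)=\cF\circ\bigl(\op(p^\ast)\circ\op_{-\varphi^\ast}(a^\ast)\bigr)^{\!\ast}\circ\cF,
\end{equation*}
so that the transpose structure in the conclusion is exactly what the $\cF$-conjugation produces.

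Next I would check that the hypotheses of Theorem \ref{thm:3.1} translate, under $a\mapsto{}^{t}a$, $p\mapsto{}^{t}p$, $\varphi\mapsto-\varphi^\ast$, $\omega_j\mapsto\omega_j^\ast={}^{t}\omega_j$, into the hypotheses of Theorem \ref{thm:0.1}. By Remark \ref{rem:sgsymm}, $\varphi\in\Ph$ forces $-\varphi^\ast\in\Ph$ (the condition \eqref{eq:2.0} is symmetric in $x,\xi$ up to sign, which does not affect the $\asymp$ relations), and $a\in\SG^{(\omega_1)}_{r_1,\rho_1}$ gives ${}^{t}a\in\SG^{(\omega_1^\ast)}_{\rho_1,r_1}$, likewise for $p$ and the weights; in particular $\mascP_{r_j,\rho_j}$ becomes $\mascP_{\rho_j,r_j}$, so the role of the first and second sets of variables is interchanged. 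Under this interchange, $r_2=1$ becomes $\rho_2=1$, the $(\phi,1)$-invariance with respect to $x\mapsto\varphi'_\xi(x,\xi)$ becomes $(\phi,2)$-invariance with respect to $\xi\mapsto(-\varphi^\ast)'_x(x,\xi)=-\varphi'_\xi(\xi,x)$, and the weight product $\omega_0=\omega_1\cdot(\Theta_{1,\fy}\omega_2)$ is carried to $\omega_1^\ast\cdot(\Theta_{2,-\varphi^\ast}\omega_2^\ast)$; I would verify that $\Theta_{1,\fy}$ and $\Theta_{2,-\varphi^\ast}$ correspond under transposition, which is a direct unwinding of \eqref{omegaVarphiDef}. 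Thus Theorem \ref{thm:0.1} applies to $\op(p^\ast)\circ\op_{-\varphi^\ast}(a^\ast)$ and yields a symbol $c_0\in\SG^{(\omega_0^\ast)}_{\rho_0,r_0}$ with the asymptotic expansion \eqref{eq:0.3} (in the transposed data), modulo $\op_{-\varphi^\ast}(\SG^{(\omega\vartheta_{0,-\infty})}_0)$ when $\rho_1=0$, and modulo $\op(\mathscr S)$ when $\rho_1>0$.

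Then I would transport the conclusion back. Conjugating $\op_{-\varphi^\ast}(c_0)$ by $\cF$ and taking the adjoint gives $\op_\varphi({}^{t}c_0)$ with $c\equiv{}^{t}c_0\in\SG^{(\omega_0)}_{r_0,\rho_0}$, so that ${}^{t}c=c_0$ admits expansion \eqref{eq:0.3} after $p,a$ are replaced by ${}^{t}p,{}^{t}a$, which is precisely the assertion. The remainder classes also transport correctly: $\cF\circ\op(\mathscr S)^\ast\circ\cF=\op(\mathscr S)$, since $\mathscr S$-kernel operators are preserved by these conjugations; and $\cF\circ\op^\ast_{-\varphi^\ast}(\SG^{(\omega\vartheta_{0,-\infty})}_0)\circ\cF=\op_\varphi(\SG^{(\omega\vartheta_{-\infty,0})}_0)$, because conjugation by $\cF$ swaps decay in $x$ and in $\xi$, turning $\vartheta_{0,-\infty}$ into $\vartheta_{-\infty,0}$ and the type II operator back into a type I operator with phase $\varphi$; here I would invoke the independence of $\SG^{(\omega\vartheta_{-\infty,0})}_\rho$ on $r$ noted after \eqref{riemannianmetric}. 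The main obstacle I anticipate is purely bookkeeping: tracking the three simultaneous substitutions (transpose of the symbols, $-\varphi^\ast$ in place of $\varphi$, and the swap of the two groups of $\SG$-orders $r\leftrightarrow\rho$) consistently through the invariance hypothesis and the weight identities $\Theta_{1,\fy}\leftrightarrow\Theta_{2,-\varphi^\ast}$, and checking that the ``$\operatorname{Mod}$'' clauses line up; none of this is deep, but it requires care to avoid sign errors in the phase and index transpositions. An alternative, more self-contained route would be to repeat the stationary-phase/oscillatory-integral computation of \cite{CoTo2} directly for $\op_\varphi(a)\circ\op(p)$, but since Theorem \ref{thm:0.1} is already available, the adjoint-and-conjugate argument is shorter and I would prefer it.
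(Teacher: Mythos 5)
Your overall strategy---reducing $\op_\varphi(a)\circ\op(p)$ to Theorem \ref{thm:0.1} through the type I/type II correspondence---is indeed the natural route, and the formulation of the conclusion in terms of ${}^tc$, ${}^tp$, ${}^ta$ is exactly the footprint of such a reduction (the paper itself defers the proof to \cite{CoTo2}). However, the pivot identity you propose to verify is false. By \eqref{eq:typeI-II}, your identity $\op_\varphi(a)\circ\op(p)=\cF\circ\bigl(\op(p^*)\circ\op_{-\varphi^*}(a^*)\bigr)^*\circ\cF$ is equivalent to $\cF^{-1}\circ\op(p^*)^*\circ\cF=\op(p)$, and this fails by a reflection: take $p(x,\xi)=x_1$, so that $p^*(x,\xi)=\xi_1$ and $\op(p^*)^*=D_{x_1}$, while $\cF^{-1}\circ D_{x_1}\circ\cF$ is multiplication by $-x_1\neq\op(p)$. (Relatedly, $\op(p)^*\neq\op(p^*)$ in the Kohn--Nirenberg quantization, even modulo lower order; $\op_0(p)^*=\op_1(\overline p)$.) The same adjoint-versus-transpose conflation resurfaces when you transport the conclusion back: by \eqref{eq:typeI-II}, $\cF\circ\op_{-\varphi^*}(c_0)^*\circ\cF=\op_\varphi(c_0^{\,*})$, i.e.\ the final amplitude is the \emph{conjugate} transpose of $c_0$, not ${}^tc_0$; with your normalisation you would still have to push a complex conjugation through the expansion \eqref{eq:0.3} (against the factors $i^{|\alpha|}$, the operators $D^\alpha$, and $e^{i\psi}$ built from $-\varphi^*$) before reaching the stated expansion in ${}^tp,{}^ta$. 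As written, the argument therefore does not go through.

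The repair is to use the bilinear transpose instead of the $L^2$-adjoint. Directly from Definition \ref{def:sgfios} one checks ${}^t\op_\varphi(a)=\cF\circ\op_{{}^t\varphi}({}^ta)\circ\cF^{-1}$ and ${}^t\op(p)=\cF\circ\op({}^tp)\circ\cF^{-1}$, with transposed (not conjugated) data and with phase ${}^t\varphi=\varphi^*$ rather than $-\varphi^*$; hence $\op_\varphi(a)\circ\op(p)={}^t\bigl[\cF\circ\op({}^tp)\circ\op_{{}^t\varphi}({}^ta)\circ\cF^{-1}\bigr]$, the reflections cancel and no conjugations enter. Your translation of the hypotheses (the swap $r_j\leftrightarrow\rho_j$, the assumption $r_2=1$ becoming the condition $\rho_2=1$ of Theorem \ref{thm:0.1}, $(\phi,1)$- turning into $(\phi,2)$-invariance, $\Theta_{1,\varphi}$ into $\Theta_{2,{}^t\varphi}$) is then correct; Theorem \ref{thm:0.1} applied to $({}^tp,{}^ta,{}^t\varphi)$ produces $c_1$ with the expansion \eqref{eq:0.3} in the transposed data and the case split $\rho_1=0$ versus $\rho_1>0$, and transporting back gives $c={}^tc_1$ together with the remainder classes $\op_\varphi(\SG^{(\omega\vartheta_{-\infty,0})}_0)$, resp.\ $\op(\mathscr S)$, which is exactly the statement. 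So the idea is the right one, but the central identity and the adjoint/transpose bookkeeping must be redone along these lines.
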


\par

\begin{thm}
\label{thm:3.2}
Let $r_j,\rho _j\in [0,1]$, $\varphi \in \Ph$ and let $\omega _j
\in \mascP_{r_j,\rho _j}(\rr {2d})$, $j=0,1,2$, be such
that
$$
\rho _2=1,
\quad r_0=\min\{r_1,r_2,1\} ,\quad \rho _0=\min\{ \rho_1,1\},
\quad \omega _0 =\omega_1\cdot (\Theta _{2,\fy} \omega _2),
$$
and $\omega _2\in \mathscr{P}_{r,1}(\rr{2d})$ is
$(\phi,2)$-invariant with respect to $\phi \colon
\xi \mapsto \varphi ^\prime _x(x,\xi)$.
Also let $b \in \SG^{(\omega _1)} _{r_1,\rho_1}(\rr {2d})$,
$p \in \SG^{(\omega _2)}_{r_2,1}(\rr {2d})$, $\psi$
be the same as in \eqref{eq:0.4}, and let $q \in \SG^{(\omega _2)}
_{r_2,1}(\rr {2d})$ be such that
\begin{equation}
\label{eq:1.11.2}
q(x,\xi)\sim\sum_{\alpha}\frac{i^{|\alpha|}}{\alpha!}D^\alpha_x D^\alpha_\xi\overline{p(x,\xi)}.
\end{equation}

Then
\begin{alignat*}{2}
\op _\varphi ^*(b) \circ \op(p) &= \op_{\varphi}(c) \operatorname{Mod}
\op _\varphi ^*(\SG ^{(\omega \vartheta _{0,-\infty })}_0 ),& \quad r_1=0,
\\[1ex]
\op _\varphi ^*(b) \circ \op(p) &= \op_{\varphi}(c) \operatorname{Mod}
\op (\mathscr S ),& \quad r_1>0,
\end{alignat*}
where $c \in \SG^{(\omega _0)}_{r_0,\rho _0}(\rr {2d})$
admits the asymptotic expansion
\begin{equation}
\label{eq:3.38}
c(x,\xi) \sim \sum_{\alpha} \frac{i^{|\alpha|}}{\alpha!} 
(D^\alpha_\xi q)(x, \varphi^\prime_x(x,\xi))
D^\alpha_y \!\!\left[ e^{i \psi(x,y,\xi)} b(y,\xi) \right ]_{y=x}.
\end{equation}
\end{thm}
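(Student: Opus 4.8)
The plan is to reduce Theorem \ref{thm:3.2} to Theorem \ref{thm:0.1} by passing to formal adjoints, exploiting the symmetry relations already recorded in Remark \ref{rem:sgsymm}. Recall that $\op_\varphi^*(b) = \op_\varphi(b)^*$ and that $\op(p)^* = \op(q)$ whenever $q$ satisfies \eqref{eq:1.11.2} (this is the standard $\SG$ formula for the adjoint of a Kohn--Nirenberg pseudo-differential operator, and it lies inside the calculus recalled in Subsection \ref{subs:1.3}). Therefore
\begin{equation*}
\op_\varphi^*(b)\circ\op(p) = \op_\varphi(b)^*\circ\op(q)^* = \bigl(\op(q)\circ\op_\varphi(b)\bigr)^*.
\end{equation*}
Now $\op(q)\circ\op_\varphi(b)$ is precisely of the form treated in Theorem \ref{thm:0.1}, with $p$ there replaced by $q$ and $a$ replaced by $b$. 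The hypotheses match up: $q\in\SG^{(\omega_2)}_{r_2,1}(\rr{2d})$ with $\rho_2 = 1$, $\omega_2$ is $(\phi,2)$-invariant with respect to $\xi\mapsto\varphi'_x(x,\xi)$, and $b\in\SG^{(\omega_1)}_{r_1,\rho_1}(\rr{2d})$; the output weight $\omega_0 = \omega_1\cdot(\Theta_{2,\fy}\omega_2)$ and the orders $r_0 = \min\{r_1,r_2,1\}$, $\rho_0 = \min\{\rho_1,1\}$ are exactly those appearing in both statements.

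By Theorem \ref{thm:0.1}, $\op(q)\circ\op_\varphi(b) = \op_\varphi(\widetilde c)\operatorname{Mod}\op_\varphi(\SG^{(\omega\vartheta_{0,-\infty})}_0)$ (when $r_1 = 0$) or $\operatorname{Mod}\op(\mathscr S)$ (when $r_1>0$), where $\widetilde c\in\SG^{(\omega_0)}_{r_0,\rho_0}(\rr{2d})$ has the asymptotic expansion \eqref{eq:0.3} with $(p,a)$ replaced by $(q,b)$. Taking formal adjoints of both sides and using $\op_\varphi(\widetilde c)^* = \op_\varphi^*(\widetilde c)$, together with the fact that the adjoint of an operator in the residual class $\op_\varphi(\SG^{(\omega\vartheta_{0,-\infty})}_0)$ is again of type II with amplitude in the same residual class (and similarly $\op(\mathscr S)^* = \op(\mathscr S)$), one obtains $\op_\varphi^*(b)\circ\op(p) = \op_\varphi^*(\widetilde c)\operatorname{Mod}\op_\varphi^*(\SG^{(\omega\vartheta_{0,-\infty})}_0)$, respectively $\operatorname{Mod}\op(\mathscr S)$. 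It remains to check that $c := \widetilde c$ has the claimed expansion \eqref{eq:3.38}: substituting $q$ for $p$ and $b$ for $a$ in \eqref{eq:0.3} literally produces \eqref{eq:3.38}, so the two are the same symbol and nothing further is needed here.

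The main obstacle is the bookkeeping for the residual (``Mod'') terms under adjunction: one must verify that the class $\op_\varphi(\SG^{(\omega\vartheta_{0,-\infty})}_0(\rr{2d}))$ is stable under formal adjoint, landing in $\op_\varphi^*$ of the same symbol class, and that the passage $\op(p)^* = \op(q)$ is exact rather than merely modulo smoothing (which it is, for the Kohn--Nirenberg adjoint with $q$ given by the full asymptotic series \eqref{eq:1.11.2} interpreted via Proposition \ref{propasymp}; any smoothing discrepancy is absorbed into the $\operatorname{Mod}$ term anyway). A secondary point is to confirm that composing the adjoint relation $\op_\varphi^*(b)\circ\op(p) = (\op(q)\circ\op_\varphi(b))^*$ is legitimate at the level of continuous operators on $\cS$ and $\cS'$, which is guaranteed by Theorem \ref{thm:2.1} and the mapping properties of $\SG$ pseudo-differential operators recalled in Subsection \ref{subs:1.3}. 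Once these points are in place, the theorem follows with no new computation beyond relabelling symbols in the already-established expansion \eqref{eq:0.3}.
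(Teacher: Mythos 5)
Your reduction by adjoints to Theorem \ref{thm:0.1}, using $\op_\varphi^*(b)=\op_\varphi(b)^*$ and $\op(p)^*=\op(q)$ (Remark \ref{rem:sgsymm} together with the Kohn--Nirenberg adjoint expansion \eqref{eq:1.11.2}), is exactly the route by which this composition theorem is obtained in the calculus the paper recalls from \cite{CoTo2}, and your bookkeeping of the residual classes under adjunction (type I residual class passing to the type II one, $\op(\mathscr S)^*=\op(\mathscr S)$) is sound. Note only that the conclusion you actually reach, $\op_\varphi^*(b)\circ\op(p)=\op_\varphi^*(c)$ modulo the stated residual classes, is the intended reading of the statement (the displayed ``$\op_{\varphi}(c)$'' should be ``$\op_{\varphi}^*(c)$'', as the type II residual class $\op_\varphi^*(\SG^{(\omega\vartheta_{0,-\infty})}_0)$ already indicates), and for the exactness of $\op(p)^*=\op(q)$ it suffices to take $q$ to be the exact adjoint symbol, which belongs to $\SG^{(\omega_2)}_{r_2,1}(\rr{2d})$ and satisfies \eqref{eq:1.11.2}, any other admissible choice differing by a term that is absorbed by applying Theorem \ref{thm:0.1} once more to the residual part.
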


\par

\begin{thm}
\label{thm:3.3}
Let $r_j,\rho _j\in [0,1]$, $\varphi \in \Ph$ and let $\omega _j
\in \mascP_{r_j,\rho _j}(\rr {2d})$, $j=0,1,2$, be such
that
$$
r_2=1,
\quad r_0=\min\{r_1,1\} ,\quad \rho _0=\min\{ \rho_1,\rho _2,1\},
\quad \omega _0 =\omega _1\cdot (\Theta _{1,\fy} \omega _2),
$$
and $\omega _2\in \mathscr{P}_{r,1}(\rr{2d})$ is
$(\phi,1)$-invariant with respect to $\phi \colon
x \mapsto \varphi^\prime_\xi (x,\xi)$.
Also let $a \in \SG^{(\omega _1)} _{r_1,\rho _1}(\rr {2d})$ and
$p \in \SG^{(\omega _2)}_{1,\rho _2}(\rr {2d})$.
Then
\begin{alignat*}{2}
\op (p)\circ \op _\varphi ^*(b) &= \op_{\varphi}(c) \operatorname{Mod}
\op _\varphi ^*(\SG ^{(\omega \vartheta _{-\infty ,0})}_0 ),& \quad \rho _1=0,
\\[1ex]
\op (p)\circ \op _\varphi ^*(b) &= \op_{\varphi}(c) \operatorname{Mod}
\op (\mathscr S ),& \quad \rho _1>0,
\end{alignat*}
where the transpose ${^t}c$ of $c \in \SG^{(\omega _0)}
_{r_0,\rho _0}(\rr {2d})$ admits the asymptotic expansion
\eqref{eq:3.38}, after $q$ and $b$ have been replaced by
${^t}q$ and ${^t}b$, respectively.
\end{thm}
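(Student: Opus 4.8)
The plan is to deduce this statement from Theorem \ref{thm:3.1} by taking formal adjoints, exactly as Theorems \ref{thm:0.1} and \ref{thm:3.2} are related through Theorems \ref{thm:3.1} and the transpose/adjoint symmetry recorded in Remark \ref{rem:sgsymm} and in \eqref{eq:typeI-II}. First I would take the $L^2$-adjoint of the composition $\op_\varphi(a)\circ\op(p)$ treated in Theorem \ref{thm:3.1}, using that $(\op_\varphi(a)\circ\op(p))^*=\op(p)^*\circ\op_\varphi(a)^*=\op(p^*)\circ\op_\varphi^*(a)$, where I recall from Remark \ref{rem:sgsymm} that $\op_\varphi(a)^*=\op_\varphi^*(a)$, and where $\op(p)^*=\op(p^*)\mod\op(\mathscr S)$ (or more precisely $\op(p)^*$ is again a $\SG$ pseudo-differential operator with symbol $p^*$ given, modulo lower order/smoothing terms, by the standard asymptotic expansion \eqref{eq:1.11.2}). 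I would also use that adjunction turns the type I remainder class $\op_\varphi(\SG^{(\omega\vartheta_{-\infty,0})}_0)$ into the type II remainder class $\op_\varphi^*(\SG^{(\omega\vartheta_{-\infty,0})}_0)$ and fixes $\op(\mathscr S)$, since these remainder operators map $\cS'$ to $\cS$ and the property is self-adjoint.

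Next I would match the symbol data. Theorem \ref{thm:3.1} is stated for $\op_\varphi(a)\circ\op(p)$; to make its adjoint land on $\op(p)\circ\op_\varphi^*(b)$ I substitute $a\rightsquigarrow b$ there, so that the adjoint reads $\op(p^*)\circ\op_\varphi^*(b)$, and then I rename the pseudo-differential factor, replacing $p^*$ by $p$ in the final statement; this is legitimate because $p\in\SG^{(\omega_2)}_{1,\rho_2}$ if and only if $p^*\in\SG^{(\omega_2^*)}_{\rho_2,1}$, and one checks the weight/order hypotheses transform consistently — the condition $r_2=1$ in Theorem \ref{thm:3.3} corresponds to the $\rho$-side condition in the adjoint picture, and the $(\phi,1)$-invariance of $\omega_2$ with respect to $x\mapsto\varphi'_\xi(x,\xi)$ is exactly the hypothesis needed. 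The output symbol $c$ of Theorem \ref{thm:3.1} satisfies that its transpose ${}^tc$ has the asymptotic expansion \eqref{eq:0.3} with $p,a$ replaced by ${}^tp,{}^ta$; under adjunction and the substitution $a\rightsquigarrow b$ this expansion gets conjugated and becomes precisely \eqref{eq:3.38} with $q,b$ replaced by ${}^tq,{}^tb$, where $q$ is the symbol of $\op(p)^*$ described by \eqref{eq:1.11.2}. Here I would invoke Proposition \ref{propasymp} and Definition \ref{def:gensgasexp} to guarantee that the formal sum \eqref{eq:3.38} indeed defines a symbol in $\SG^{(\omega_0)}_{r_0,\rho_0}(\rr{2d})$, with $\omega_0=\omega_1\cdot(\Theta_{1,\fy}\omega_2)$, and that Lemma \ref{lemma:omegainv} together with the invariance hypothesis on $\omega_2$ places $\Theta_{1,\fy}\omega_2$, hence $\omega_0$, in the required class $\mascP_{r_0,\rho_0}(\rr{2d})$.

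The main obstacle I anticipate is bookkeeping the asymptotic expansion under the combination of two operations — transposition (from the statement of Theorem \ref{thm:3.1}) and adjunction (complex conjugation plus transposition) — and verifying that the composed effect is consistent with the Leibniz-type expansion \eqref{eq:3.38}; in particular one must track how the derivatives $D^\alpha_\xi$, the factor $e^{i\psi}$, and the conjugations interact, and confirm that the phase $\psi$ in \eqref{eq:0.4} is unchanged (it is, since $\psi$ depends only on $\varphi$, and $\varphi$ is real-valued and untouched by the substitution $a\rightsquigarrow b$). A secondary point requiring care is that the adjoint of a $\SG$ pseudo-differential operator introduces, strictly speaking, a full symbol $p^*$ only modulo a remainder of the type in \eqref{eq:gensgasexp}; this remainder, once composed with $\op_\varphi^*(b)$, must be absorbed into the claimed remainder classes $\op_\varphi^*(\SG^{(\omega\vartheta_{-\infty,0})}_0)$ when $\rho_1=0$, respectively $\op(\mathscr S)$ when $\rho_1>0$ — which follows from Theorem \ref{thm:2.1} and the mapping properties of the relevant regularizing operators, exactly as in \cite{CoTo2}. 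Once these identifications are made, the statement follows; for the details we again refer to \cite{CoTo2}.
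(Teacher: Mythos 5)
The paper itself does not prove Theorem \ref{thm:3.3}; it defers all four composition theorems of Subsection \ref{subs:2.3} to \cite{CoTo2}, and the route you propose --- obtaining the type II statements by taking formal adjoints of the type I statements (here: of Theorem \ref{thm:3.1}), with the pseudo-differential factor replaced by the symbol $q$ of $\op (p)^*$ as in \eqref{eq:1.11.2} --- is indeed the natural and intended derivation, and the remainder bookkeeping you describe ($\op _\varphi (h)^*=\op _\varphi ^*(h)$, $\op (\mathscr S)^*=\op (\mathscr S)$) is correct.

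Two points in your justification are, however, wrong as written and should be repaired. First, you conflate the involution $p\mapsto p^*$, $(p^*)(x,\xi )=\overline {p(\xi ,x)}$ (which swaps the variables and hence the orders, $p^*\in \SG ^{(\omega _2^*)}_{\rho _2,1}$), with the adjoint symbol $q$ of \eqref{eq:1.11.2}, which does \emph{not} swap variables: $q\in \SG ^{(\omega _2)}_{1,\rho _2}$, the same class as $p$. Consequently your sentence that ``the condition $r_2=1$ corresponds to the $\rho$-side condition in the adjoint picture'' is incorrect --- adjunction leaves the $x$- and $\xi$-sides in place, and this is precisely why the hypotheses of Theorem \ref{thm:3.3} ($r_2=1$, $(\phi ,1)$-invariance of $\omega _2$ with respect to $x\mapsto \varphi '_\xi (x,\xi )$) coincide verbatim with those of Theorem \ref{thm:3.1} applied to $\op _\varphi (b)\circ \op (q)$. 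The cleanest formulation avoids the ``renaming'' altogether: write $\op (p)\circ \op _\varphi ^*(b)=\bigl(\op _\varphi (b)\circ \op (q)\bigr)^*$ with $q$ the exact Kohn--Nirenberg symbol of $\op (p)^*$, apply Theorem \ref{thm:3.1}, and take adjoints back; this yields $\op _\varphi ^*(c)$ with $c\in \SG ^{(\omega _0)}_{r_0,\rho _0}$ and ${}^tc$ having the expansion \eqref{eq:3.38} with $q,b$ replaced by ${}^tq,{}^tb$, as claimed (note that the composition is a type II operator; the ``$\op _\varphi (c)$'' in the displayed conclusion is a misprint, consistent with the type II remainder class). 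Second, your justification that the remainder operators ``map $\cS '$ to $\cS$'' is false for the class $\op _\varphi (\SG ^{(\omega \vartheta _{-\infty ,0})}_0)$ appearing when the relevant $\rho$-parameter vanishes (these amplitudes gain decay only in $x$); the correct and simpler argument is just that the adjoint of $\op _\varphi (h)$ is $\op _\varphi ^*(h)$ with $h$ in the same symbol class. With these corrections your plan goes through and agrees with the argument in \cite{CoTo2}.
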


\par

\subsection{Composition between $\SG$ FIOs of type I and type II}
\label{subs:2.4}
The subsequent Theorems \ref{thm:3.4} and \ref{thm:3.5} deal with the
composition of a type I operator with a type II operator, and show that such compositions
are pseudo-differential operators with symbols in natural classes. 

\par

The main difference, with respect to the arguments in \cite{coriasco}
for the analogous composition results, is that we again make use, in both cases, of 
the generalized asymptotic expansions introduced in Definition \ref{def:gensgasexp}.
This allows to overcome the additional difficulty, not arising there, that the amplitudes
appearing in the computations below involve weights which are still polynomially bounded, but
which do not satisfy, in general, the moderateness condition \eqref{moderate}. On the 
other hand, all the terms appearing in the associated asymptotic expansions belong 
to $\SG$ classes with weights of the form $\widetilde{\omega}_{2,\varphi}\cdot\vartheta_{-k,-k}$,
where $\widetilde{\omega}=\omega_1\cdot\omega_2$, which can be handled through the results 
in \cite{CoTo}.

\par

Let $S_\fy$, $\fy \in \Ph$, be the operator defined by the formulae
\begin{equation}\label{SvarphiDef}
\begin{gathered}
(S_\fy f)(x,y,\xi ) = f(x,y,\Phi (x,y,\xi ))\cdot \left | \det \Phi '_\xi (x,y,\xi )\right |
\\[1ex]
\text{where}\quad
\int _0^1 \fy _x'(y+t(x-y),\Phi (x,y,\xi ))\, dt =\xi .
\end{gathered}
\end{equation}
That is, for every fixed $x,y\in \rr d$, $\xi \mapsto \Phi (x,y,\xi )$ is the inverse of the map
\begin{equation}\label{eq:diffeo}
\xi \mapsto \int _0^1 \fy _x'(y+t(x-y),\xi )\, dt .
\end{equation}
Notice that, as proved in \cite{coriasco}, the map \eqref{eq:diffeo} is indeed invertible for $(x,y)$ belonging to a suitable
neighborhood of the diagonal $y=x$ of $\rr{d}\times\rr{d}$, and it turns out to be a $\SG$ diffeomorphism with $\SG^0$
parameter dependence. We also recall, from \cite{coriasco}, the definition of the 
$\SG$ compatible cut-off functions localizing to such neighborhoods.

\par

\begin{defn}
\label{def:1.2.2}
The sets $\Xi^\Delta(k)$, $k > 0$, of the $\SG$ compatible cut-off functions
along the diagonal of $\rr{d}\times\rr{d}$, consist of all 
$\chi = \chi(x,y) \in  \SG ^{0,0}_{1,1}(\rr {2d})$ such that
\begin{equation}
\label{eq:1.1.3}
\begin{array}{rcl}
        |y-x| \le k\norm{x}/2 & \Longrightarrow & \chi(x,y) = 1,
        \\[1ex]
        |y-x| >         k      \norm{x} & \Longrightarrow & \chi(x,y) = 0.
\end{array}
\end{equation}
If not otherwise stated, we always assume $k \in (0,1)$.
\end{defn}

\par

\begin{thm}
\label{thm:3.4}
Let $r_j\in [0,1]$, $\varphi \in \Ph$ and let $\omega _j
\in \mascP_{r_j,1}(\rr {2d})$, $j=0,1,2$, be such that $\omega _1$ and
$\omega _2$ are $(\phi,2)$-invariant
with respect to $\phi \colon
\xi \mapsto (\varphi ^\prime _x)^{-1}(x,\xi)$,
$$
\quad r_0=\min\{r_1,r_2,1\} \quad \text{and}
\quad \omega _0(x,\xi ) =\omega_1(x,\phi (x,\xi ))\omega_{2}(x,\phi (x,\xi )),
$$
Also let $a \in \SG^{(\omega _1)} _{r_1,1}(\rr {2d})$ and
$b \in \SG^{(\omega _2)}_{r_2,1}(\rr {2d})$.
Then
\begin{equation*}
\op _\varphi (a) \circ \op _\varphi ^*(b) = \op (c),
\end{equation*}
for some $c \in \SG^{(\omega _0)}_{r_0,1}(\rr {2d})$.
Furthermore, if $\ep \in (0,1)$, $\chi \in \Xi ^\Delta (\ep )$,
$c_0(x,y,\xi )= a(x,\xi )b(y,\xi )\chi (x,y)$ and $S_\fy$ is given by
\eqref{SvarphiDef}, then $h$ admits the asymptotic expansion
\begin{equation*}
c(x,\xi) \sim \sum _{\alpha} \frac{i^{|\alpha |}}{\alpha !} 
(D^\alpha _yD^\alpha_\xi (S_\fy c))(x,y,\xi) \big |_{y=x}.
\end{equation*}
\end{thm}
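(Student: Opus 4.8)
The plan is to reduce the composition $\op_\varphi(a)\circ\op_\varphi^*(b)$ to a pseudo-differential operator by writing its distribution kernel explicitly and manipulating the resulting oscillatory integral. First I would compute the kernel of the composition: starting from the kernels $K_A(x,y)=(2\pi)^{-d/2}(\cF_2(e^{i\fy}a))(x,y)$ and $K_B(x,y)=(2\pi)^{-d/2}(\cF^{-1}_2(e^{-i\fy}\overline b))(y,x)$, the kernel of $A\circ B$ is formally
\[
K(x,z)=(2\pi)^{-d}\int e^{i(\fy(x,\xi)-\fy(z,\xi))}a(x,\xi)\overline{b(z,\xi)}\,d\xi .
\]
This is a tempered oscillatory integral of the type treated in \cite{CoSch}, so it makes sense in $\cS'(\rr{2d})$. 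The goal is to show $K(x,z)=K_c(x,z)$ for a $\SG$ symbol $c$, i.e. $K(x,z)=(2\pi)^{-d/2}(\cF_2^{-1}c)(\tfrac{x+z}{2}\text{ or }x,x-z)$ up to the usual identifications with amplitude symbols; equivalently one recognizes $A\circ B=\op(c_0)$ with an amplitude $c_0(x,z,\xi)$ depending on three variables and then passes to a symbol on $\rr{2d}$ via the amplitude-to-symbol asymptotic expansion from Subsection \ref{subs:1.4} and Proposition \ref{propasymp}.

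The key technical step is the change of variables in the $\xi$-integral. Writing the phase difference as $\fy(x,\xi)-\fy(z,\xi)=\scal{x-z}{\Psi(x,z,\xi)}$ with $\Psi(x,z,\xi)=\int_0^1\fy'_x(z+t(x-z),\xi)\,dt$ (by the fundamental theorem of calculus), one wants to substitute $\eta=\Psi(x,z,\xi)$. The map $\xi\mapsto\Psi(x,z,\xi)$ is exactly the map \eqref{eq:diffeo} whose inverse $\Phi(x,z,\xi)$ defines $S_\fy$ in \eqref{SvarphiDef}; by the cited result from \cite{coriasco} it is a $\SG$ diffeomorphism with $\SG^0$ parameter dependence for $(x,z)$ near the diagonal. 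So I would insert a cut-off $\chi\in\Xi^\Delta(\ep)$ splitting $c_0(x,z,\xi)=a(x,\xi)\overline{b(z,\xi)}\chi(x,z)+(\text{remainder})$; on the support of $\chi$ perform the substitution, obtaining after the change of variables the kernel $(2\pi)^{-d}\int e^{i\scal{x-z}{\eta}}(S_\fy c_0)(x,z,\eta)\,d\eta$, which is precisely $\op(S_\fy c_0)$ as an amplitude operator. The Jacobian factor $|\det\Phi'_\xi|$ is built into the definition of $S_\fy$. Then Lemma \ref{lemma:omegainv} and Remark \ref{rem:3.2}, together with the $(\phi,2)$-invariance hypotheses on $\omega_1,\omega_2$ with $\phi=(\fy'_x)^{-1}$, guarantee that $S_\fy c_0\in\SG^{(\widetilde\omega_{2,\fy})}_{r_0,1}(\rr{3d})$ where $\widetilde\omega_{2,\fy}(x,z,\eta)=\omega_1(x,\Phi)\omega_2(z,\Phi)$ restricted near the diagonal behaves like $\omega_0$; applying the amplitude-to-symbol reduction yields $c\in\SG^{(\omega_0)}_{r_0,1}(\rr{2d})$ with the stated asymptotic expansion $c(x,\xi)\sim\sum_\alpha\frac{i^{|\alpha|}}{\alpha!}(D^\alpha_yD^\alpha_\xi(S_\fy c_0))(x,y,\xi)|_{y=x}$, which is just the standard Kuranishi/amplitude expansion.

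The part left to control is the complement $1-\chi$ of the cut-off, where $|x-z|\gtrsim\norm{x}$. There the phase difference is non-stationary in a quantitative $\SG$ sense — one has $|\nabla_\xi(\fy(x,\xi)-\fy(z,\xi))|=|\fy'_\xi(x,\xi)-\fy'_\xi(z,\xi)|\asymp|x-z|$ by \eqref{eq:2.0} or rather by the mean-value estimate combined with the diffeomorphism property of $x\mapsto\fy'_\xi(x,\xi)$ — so repeated integration by parts in $\xi$ with the operator $L=\scal{\nabla_\xi}{D_\xi}/|\nabla_\xi\cdot|^2$ produces arbitrarily fast decay in $\norm{x}$, and symmetrically one gains decay in $\norm{z}$ and in $\norm{\xi}$ when the amplitude weights are not bounded. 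Hence this piece contributes a kernel which is $\cS(\rr{2d})$, i.e. a smoothing operator absorbed into $\op(\cS)$ or, equivalently, into the indeterminacy of the asymptotic expansion as in \eqref{eq:gensgasexp}.

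The main obstacle, as flagged in Subsection \ref{subs:2.4}, is that the intermediate amplitudes $S_\fy c_0$ carry weights of the form $\widetilde\omega_{2,\fy}\cdot\vartheta_{-k,-k}$ which are polynomially bounded but need not be $v$-moderate, so the classical amplitude calculus of \cite{coriasco} does not apply verbatim; one must instead invoke the generalized asymptotic expansion machinery of \cite{CoTo} (Proposition \ref{propasymp} and Definition \ref{def:gensgasexp}) to sum the expansion and identify the symbol class, and carefully track that the convergence/remainder statements in Proposition \ref{propasymp} are compatible with $r_0=\min\{r_1,r_2,1\}$ and $\rho_0=1$. All the remaining steps — the kernel computation, the change of variables, the integration by parts for the off-diagonal part — are routine once the diffeomorphism $\Phi$ and the invariance of the weights are in place, which are supplied by Proposition \ref{prop:3.2}, Lemma \ref{lemma:omegainv} and Remark \ref{rem:3.2}.
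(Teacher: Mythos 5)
Your proposal is correct and follows essentially the same route as the proof the paper relies on (deferred to \cite{CoTo2}, adapting \cite{coriasco}): the kernel of $\op_\fy(a)\circ\op_\fy^*(b)$ as a tempered oscillatory integral, the cut-off $\chi\in\Xi^\Delta(\ep)$, the Kuranishi change of variables producing $S_\fy c_0$ and the amplitude-to-symbol expansion, integration by parts for the off-diagonal smoothing remainder, and the generalized asymptotic expansions of \cite{CoTo} to handle the intermediate non-moderate weights, exactly as indicated in Subsection \ref{subs:2.4}. Note only that your lower bound $|\fy'_\xi(x,\xi)-\fy'_\xi(y,\xi)|\gtrsim|x-y|$ and the invertibility of the map \eqref{eq:diffeo} use the regularity of the phase, i.e.\ $\varphi\in\Phr$, which is evidently the intended hypothesis here (cf.\ Theorem \ref{thm:3.5}), the ``$\varphi\in\Ph$'' in the statement being a slip.
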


To formulate the next result we modify the operator $S_\fy$ in \eqref{SvarphiDef}
such that it fulfills the formulae
\begin{equation}\label{SvarphiDef2}
\begin{gathered}
(S_\fy f)(x,\xi ,\eta ) = f(\Phi (x,y,\xi ),\xi,\eta )\cdot \left | \det \Phi '_x (x,\xi ,\eta )\right |
\\[1ex]
\text{where}\quad
\int _0^1 \fy _\xi '(\Phi (x,\xi ,\eta ), \eta +t(\xi -\eta ))\, dt =x .
\end{gathered}
\end{equation}

\par

\begin{thm}
\label{thm:3.5}
Let $\rho _j\in [0,1]$, $\varphi \in \Phr$ and let $\omega _j
\in \mascP_{1,\rho _j}(\rr {2d})$, $j=0,1,2$, be such that $\omega _1$ and
$\omega _2$ are $(\phi,1)$-invariant
with respect to $\phi \colon
x \mapsto (\varphi ^\prime _\xi )^{-1}(x,\xi)$,
$$
\quad \rho _0=\min\{ \rho _1,\rho _2,1\} \quad \text{and}
\quad \omega _0(x,\xi ) =\omega_1(\phi (x,\xi ),\xi )\omega_{2}(\phi (x,\xi ),\xi ),
$$
Also let $a \in \SG^{(\omega _1)} _{1,\rho _1}(\rr {2d})$ and
$b \in \SG^{(\omega _2)}_{1,\rho _2}(\rr {2d})$.
Then
\begin{equation*}
\op _\varphi ^*(b) \circ \op _\varphi (a) = \op (c),
\end{equation*}
for some $c \in \SG^{(\omega _0)}_{1,\rho _0}(\rr {2d})$.
Furthermore, if $\ep \in (0,1)$, $\chi \in \Xi ^\Delta (\ep )$,
$c_0(x,\xi ,\eta )= a(x,\xi )b(x,\eta )\chi (\xi ,\eta )$ and $S_\fy$ is given by
\eqref{SvarphiDef2}, then $h$ admits the asymptotic expansion
\begin{equation*}
c(x,\xi) \sim \sum _{\alpha} \frac{i^{|\alpha |}}{\alpha !} 
(D^\alpha _xD^\alpha_\eta (S_\fy c))(x,\xi ,\eta ) \big |_{\eta =\xi}.
\end{equation*}
\end{thm}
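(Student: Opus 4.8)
The plan is to reduce Theorem \ref{thm:3.5} to Theorem \ref{thm:3.4} via the conjugation-by-Fourier-transform identities \eqref{eq:typeI-II}, together with the transposition rules collected in Remark \ref{rem:sgsymm}. Concretely, from \eqref{eq:typeI-II} we have $\op_\varphi^*(b) = \cF^{-1}\circ\op_{-\varphi^*}(b^*)\circ\cF^{-1}$ and $\op_\varphi(a) = \cF\circ\op_{-\varphi^*}^*(a^*)\circ\cF$, so
\begin{equation*}
\op_\varphi^*(b)\circ\op_\varphi(a) = \cF^{-1}\circ\bigl(\op_{-\varphi^*}(b^*)\circ\op_{-\varphi^*}^*(a^*)\bigr)\circ\cF .
\end{equation*}
Here $-\varphi^* = -{}^t\varphi$; since $\varphi\in\Phr$ and $\Phr$ is stable under $\varphi\mapsto\varphi^*$ (Remark \ref{rem:sgsymm}) and trivially under multiplication by $-1$ — the conditions \eqref{eq:2.0} and the non-degeneracy $|\det\varphi''_{x\xi}|\ge c$ are insensitive to an overall sign — we have $-\varphi^*\in\Phr$. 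Likewise, $a^*\in\SG^{(\omega_1^*)}_{\rho_1,1}$ and $b^*\in\SG^{(\omega_2^*)}_{\rho_2,1}$ by Remark \ref{rem:sgsymm}, so the composition $\op_{-\varphi^*}(b^*)\circ\op_{-\varphi^*}^*(a^*)$ is exactly of the form treated in Theorem \ref{thm:3.4}, with phase $-\varphi^*$, type I amplitude $b^*$, type II amplitude $a^*$, and orders $\rho_1,\rho_2$ playing the role of $r_1,r_2$. Theorem \ref{thm:3.4} then yields $\op_{-\varphi^*}(b^*)\circ\op_{-\varphi^*}^*(a^*) = \op(\tilde c)$ for some $\tilde c\in\SG^{(\tilde\omega_0)}_{\rho_0,1}$, with $\rho_0 = \min\{\rho_1,\rho_2,1\}$ and $\tilde\omega_0$ the weight prescribed there relative to $-\varphi^*$.

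Next I would transport this back through the conjugation. Conjugation of a pseudo-differential operator $\op(\tilde c)$ by $\cF^{\pm1}$ is again a pseudo-differential operator: one has the general identity $\cF^{-1}\circ\op(\tilde c)\circ\cF = \op(c)$ where $c(x,\xi)$ is obtained from $\tilde c$ by the symplectic change of variables $(x,\xi)\mapsto(\xi,-x)$ (up to the standard $\SG$-calculus corrections that are already absorbed in the "$\op(\tilde c)=\op(a)$ modulo asymptotic expansion" identification of Subsection \ref{subs:1.4}), which maps $\SG^{(\tilde\omega_0)}_{\rho_0,1}$ to $\SG^{(\omega_0)}_{1,\rho_0}$ with $\omega_0$ obtained by the dual change of variables on $\tilde\omega_0$. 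Carrying out this substitution and using Remark \ref{rem:sgsymm} to turn derivatives in the $x$-variable of $-\varphi^*$ into derivatives in the $\xi$-variable of $\varphi$, the diffeomorphism $\xi\mapsto(\varphi_x'(x,\cdot))^{-1}$ appearing in Theorem \ref{thm:3.4} becomes $x\mapsto(\varphi_\xi'(\cdot,\xi))^{-1}$, i.e. the map $\phi$ in the statement; the weight $\tilde\omega_0(x,\xi)=\omega_1^*(x,\psi(x,\xi))\omega_2^*(x,\psi(x,\xi))$ transforms into $\omega_0(x,\xi)=\omega_1(\phi(x,\xi),\xi)\omega_2(\phi(x,\xi),\xi)$, exactly as claimed, and the cut-off $\chi\in\Xi^\Delta(\ep)$ and operator $S_\varphi$ of \eqref{SvarphiDef} turn into the cut-off $\chi(\xi,\eta)$ and the modified operator $S_\varphi$ of \eqref{SvarphiDef2}. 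The asymptotic expansion $\sum_\alpha \frac{i^{|\alpha|}}{\alpha!}(D^\alpha_y D^\alpha_\xi(S_\varphi c_0))(x,y,\xi)|_{y=x}$ of Theorem \ref{thm:3.4} then becomes $\sum_\alpha\frac{i^{|\alpha|}}{\alpha!}(D^\alpha_x D^\alpha_\eta(S_\varphi c_0))(x,\xi,\eta)|_{\eta=\xi}$.

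The genuine content to be checked, beyond bookkeeping, is threefold. First, that $-\varphi^*$ really satisfies the hypotheses of Theorem \ref{thm:3.4} with the right roles of the order indices — in particular that $(\phi,2)$-invariance of $\omega_1^*,\omega_2^*$ with respect to $\xi\mapsto((-\varphi^*)_x')^{-1}$ is equivalent, after transposition, to the assumed $(\phi,1)$-invariance of $\omega_1,\omega_2$ with respect to $x\mapsto(\varphi_\xi')^{-1}$; this is a direct unwinding of Definition \ref{def:omegainv} using $(-\varphi^*)_x'(x,\xi) = -\varphi_\xi'(\xi,x)$. Second, that the weight class indices match: $\omega_j\in\mascP_{1,\rho_j}$ gives $\omega_j^*\in\mascP_{\rho_j,1}$ by Remark \ref{rem:sgsymm}, which is what Theorem \ref{thm:3.4} requires; and Lemma \ref{lemma:omegainv} (via Proposition \ref{prop:3.2} applied to $\varphi\in\Phr$) guarantees the resulting $\omega_0$ lies in $\mascP_{1,\rho_0}(\rr{2d})$. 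Third, the precise matching of the asymptotic expansions and of the definition of $S_\varphi$ in \eqref{SvarphiDef2} versus \eqref{SvarphiDef}: here the defining relation $\int_0^1(-\varphi^*)_x'(y+t(x-y),\Phi)\,dt = \xi$ must be shown to become, after the substitution $(x,\xi)\leftrightarrow(\xi,\eta)$ and relabeling, the relation $\int_0^1\varphi_\xi'(\Phi(x,\xi,\eta),\eta+t(\xi-\eta))\,dt = x$. The main obstacle is exactly this last point — keeping track of which variables are the "base" variables and which are the "frequency" variables through the two Fourier conjugations, so that the diagonal-localizing cut-off and the change of variables defining $S_\varphi$ land on the correct pair; everything else is either a formal consequence of \eqref{eq:typeI-II}, Remark \ref{rem:sgsymm} and Theorem \ref{thm:3.4}, or is covered by the $\SG$-invariance results of Subsection \ref{subs:1.4}.
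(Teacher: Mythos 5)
Your strategy (reduce Theorem \ref{thm:3.5} to Theorem \ref{thm:3.4} by conjugation with \eqref{eq:typeI-II}) is genuinely different from the route behind the paper: Theorem \ref{thm:3.5} is quoted from \cite{CoTo2}, where, as in \cite{coriasco}, the composition $\op_\varphi^*(b)\circ\op_\varphi(a)$ is handled directly, by writing its kernel as an oscillatory integral with two frequency variables, inserting the cut-off $\chi(\xi,\eta)$, performing the change of variables encoded by $S_\fy$ in \eqref{SvarphiDef2}, and reducing the resulting amplitude via the generalized asymptotic expansions of Proposition \ref{propasymp}. Much of your bookkeeping does check out: $-\varphi^*\in\Phr$, $b^*\in\SG^{(\omega_2^*)}_{\rho_2,1}$, $a^*\in\SG^{(\omega_1^*)}_{\rho_1,1}$; using $(-\varphi^*)'_x(x,\xi)=-\varphi'_\xi(\xi,x)$, the required $(\cdot,2)$-invariance of $\omega_j^*$ unwinds to the assumed $(\phi,1)$-invariance of $\omega_j$ (the sign flip is harmless because the flipped argument is universally quantified); and the two sign changes (one from transposition, one from the Fourier conjugation, which sends the inner frequency to minus the outer space variable) cancel, so the transported weight is exactly $\omega_0(x,\xi)=\omega_1(\phi(x,\xi),\xi)\,\omega_2(\phi(x,\xi),\xi)$ and the relation $\int_0^1(-\varphi^*)'_x(y+t(x-y),\Phi)\,dt=\xi$ does become $\int_0^1\varphi'_\xi(\Phi(x,\xi,\eta),\eta+t(\xi-\eta))\,dt=x$, i.e. \eqref{SvarphiDef2}. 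This suffices for the qualitative conclusion that $\op_\varphi^*(b)\circ\op_\varphi(a)=\op(c)$ with $c\in\SG^{(\omega_0)}_{1,\rho_0}$.

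The genuine gap is in the identification of the asymptotic expansion. Conjugation by the Fourier transform does not map the Kohn--Nirenberg quantization to itself by a mere change of variables: a direct computation gives $\cF^{-1}\circ\op_0(\tilde c)\circ\cF=\op_1(a_1)$ with $a_1(y,\eta)=\tilde c(\eta,-y)$, i.e. the $t=1$ quantization (your substitution rule $(x,\xi)\mapsto(\xi,-x)$ is exact only for the Weyl quantization). Hence the symbol $c$ of Theorem \ref{thm:3.5} is obtained from the transported symbol of Theorem \ref{thm:3.4} only after composing with a further change-of-quantization asymptotic series, and your concluding claim that the expansion of Theorem \ref{thm:3.4} simply ``becomes'' $\sum_\alpha\frac{i^{|\alpha|}}{\alpha!}(D_x^\alpha D_\eta^\alpha(S_\fy c_0))(x,\xi,\eta)\big|_{\eta=\xi}$ under relabelling is not justified as stated: one must compose the transported expansion with this extra series and regroup, which is essentially the same amount of work as the direct proof. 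A cleaner way to complete your idea is to transport not the final symbol of Theorem \ref{thm:3.4} but its intermediate amplitude representation: conjugating the amplitude operator with amplitude $S_{-\varphi^*}c_0$ by $\cF^{\pm1}$ produces an operator whose amplitude depends on the two frequency variables $(\xi,\eta)$, and reducing such an amplitude to a symbol yields precisely the expansion claimed in Theorem \ref{thm:3.5}; with that modification, and the sign checks above written out, your reduction closes.
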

\subsection{Elliptic FIOs of generalized $\SG$ type and parametrices. Egorov Theorem}
\label{sec:2.5}
The results about the parametrices of the subclass of
generalized ($\SG$) elliptic  Fourier integral operators
are achieved in the usual way, by means of the composition
theorems in Subsections \ref{subs:2.3} and \ref{subs:2.4}.
The same holds for the versions of the Egorov's theorem
adapted to the present situation. The additional conditions,
compared with the statements in \cite{coriasco}, concern
the invariance of the weights, so that the hypotheses of
the composition theorems above are fulfilled. 

\begin{defn}
\label{def:3.1}
A type I  or a type II $\SG$ FIO, $\op_{\varphi}(a)$ or $\op^*_{\varphi}(b)$, respectively,
is said ($\SG$) elliptic if $\varphi \in \Phr$ and the amplitude $a$, respectively $b$, is ($\SG$) elliptic. 
\end{defn}
\begin{lemma}
\label{lemma:3.15}
Let a type I $\SG$ FIO $\op_{\varphi}(a)$ be elliptic, with 
$a\in\SG^{(\omega)}_{1,1}(\rr{2d})$. Assume that $\omega$ is $\phi$-invariant, 
$\phi=(\phi_1,\phi_2)$, where $\phi_2$ and $\phi_1$ are the $\SG$ diffeomorphisms
appearing in Theorems \ref{thm:3.4} and \ref{thm:3.5}, respectively.
Then, the two pseudo-differential operators $\op_{\varphi}(a)\circ\op^*_{\varphi}(a)$ and $\op
^*_{\varphi}(a)\circ\op_{\varphi}(a)$ are $\SG$ elliptic.
\end{lemma}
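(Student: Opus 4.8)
The plan is to apply Theorems \ref{thm:3.4} and \ref{thm:3.5} directly to the two compositions and then invoke ellipticity of the resulting symbols. First I would treat $\op_{\varphi}(a)\circ\op^*_{\varphi}(a)$: since $\op_{\varphi}(a)$ is elliptic, we have $\varphi\in\Phr$ and $a\in\SG^{(\omega)}_{1,1}(\rr{2d})$ elliptic, so both $a$ and $a$ (in the roles of the two amplitudes) lie in $\SG^{(\omega)}_{1,1}(\rr{2d})$, and by hypothesis $\omega$ is $(\phi_2,2)$-invariant with respect to the diffeomorphism $\phi_2\colon\xi\mapsto(\varphi'_x)^{-1}(x,\xi)$ appearing in Theorem \ref{thm:3.4}. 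Applying Theorem \ref{thm:3.4} with $\omega_1=\omega_2=\omega$, $r_1=r_2=1$, we get $\op_{\varphi}(a)\circ\op^*_{\varphi}(a)=\op(c)$ for some $c\in\SG^{(\omega_0)}_{1,1}(\rr{2d})$ with $\omega_0(x,\xi)=\omega(x,\phi_2(x,\xi))^2$, and with the stated asymptotic expansion whose leading term is $(S_\fy c_0)(x,x,\xi)$ with $c_0(x,y,\xi)=a(x,\xi)a(y,\xi)\chi(x,y)$.

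The core of the argument is then to check that $c$ is $\SG$ elliptic, i.e.\ that $|c(x,\xi)|\gtrsim\omega_0(x,\xi)$ for $|x|+|\xi|$ large. This follows by examining the leading term of the asymptotic expansion. Evaluating $S_\fy c_0$ at $y=x$: since $\chi\equiv1$ near the diagonal, $c_0(x,x,\xi)=a(x,\xi)^2$, and at $y=x$ the map \eqref{eq:diffeo} reduces to $\xi\mapsto\fy'_x(x,\xi)$, so $\Phi(x,x,\xi)=(\varphi'_x)^{-1}(x,\xi)=\phi_2(x,\xi)$ and the Jacobian factor $|\det\Phi'_\xi(x,x,\xi)|=|\det(\varphi''_{x\xi}(x,\phi_2(x,\xi)))|^{-1}$ is bounded above and below by positive constants, by the regularity condition $|\det\varphi''_{x\xi}|\ge c$ combined with the $\SG^{1,1}_{1,1}$ bounds on $\varphi$. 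Hence the principal symbol of $c$ is, modulo lower order, a nonzero constant multiple of $a(x,\phi_2(x,\xi))^2$; since $a$ is elliptic, $|a(x,\phi_2(x,\xi))|\gtrsim\omega(x,\phi_2(x,\xi))=\omega_0(x,\xi)^{1/2}$, so $|a(x,\phi_2(x,\xi))^2|\gtrsim\omega_0(x,\xi)$. The remaining terms in the expansion belong to $\SG^{(\omega_0\vartheta_{-k,-k})}_{1,1}(\rr{2d})$ for $k\ge1$ and are therefore negligible for $|x|+|\xi|$ large, so $c$ is elliptic of weight $\omega_0$, i.e.\ $\op(c)$ is $\SG$ elliptic.

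For $\op^*_{\varphi}(a)\circ\op_{\varphi}(a)$ I would argue symmetrically using Theorem \ref{thm:3.5}: here the relevant diffeomorphism is $\phi_1\colon x\mapsto(\varphi'_\xi)^{-1}(x,\xi)$, $\omega$ is $(\phi_1,1)$-invariant by hypothesis, and Theorem \ref{thm:3.5} gives $\op^*_{\varphi}(a)\circ\op_{\varphi}(a)=\op(c)$ with $c\in\SG^{(\omega_0)}_{1,\rho_0}(\rr{2d})$, $\omega_0(x,\xi)=\omega(\phi_1(x,\xi),\xi)^2$, and principal symbol a nonzero constant multiple of $a(\phi_1(x,\xi),\xi)^2$, which is elliptic of weight $\omega_0$ for the same reason. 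One small bookkeeping point: Lemma \ref{lemma:omegainv} (together with Remark \ref{rem:3.2}) guarantees that $\omega_0$ indeed lies in the appropriate class $\mascP_{1,1}(\rr{2d})$, so that ``$\SG$ elliptic'' makes sense in the conclusion.

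The main obstacle I anticipate is the careful identification of the leading term of the asymptotic expansion at $y=x$ (resp.\ $\eta=\xi$) and the verification that the Jacobian factor coming from $S_\fy$ is bounded below; both reduce to the regularity condition on $\varphi$ and the uniform estimates defining $\Phi$, but they must be tracked explicitly to conclude ellipticity rather than merely a bound $|c|\lesssim\omega_0$. Once the principal symbol is pinned down as a constant times $a^2$ composed with a diffeomorphism, ellipticity is immediate and the lower-order terms are absorbed using Proposition \ref{propasymp} and the fact that $\vartheta_{-k,-k}$-perturbations are dominated.
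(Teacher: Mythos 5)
Your proposal is correct and follows essentially the same route as the paper, which obtains the lemma directly from the composition Theorems \ref{thm:3.4} and \ref{thm:3.5}: the composite is a pseudo-differential operator whose leading term is $a$ times its (conjugate) partner composed with the relevant diffeomorphism, multiplied by a Jacobian factor bounded above and below thanks to $|\det \varphi ''_{x\xi}|\ge c$ and the $\SG^{1,1}_{1,1}$ bounds, while the remaining terms carry the extra decay $\vartheta_{-1,-1}$ and are absorbed. The only cosmetic point is that the principal part is really $|a|^2$ (the type II factor carries $\overline a$), not $a^2$, which of course changes nothing in the ellipticity estimate $|c|\gtrsim \omega_0$.
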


\par

\begin{thm}
\label{thm:3.8}
Let $\varphi \in \Phr$, $a\in \SG ^{(\omega)}_{1,1}(\rr{2d})$, with $a$ $\SG$ elliptic.
Assume that $\omega$ is $\phi$ invariant, 
$\phi=(\phi_1,\phi_2)$, where $\phi_2$ and $\phi_1$ are the $\SG$ diffeomorphisms
appearing in the Theorems \ref{thm:3.4} and \ref{thm:3.5}, respectively.
Then, the elliptic $\SG$ FIOs $\op_{\varphi}(a)$ and $\op^*_{\varphi}(a)$ admit a parametrix.
These are elliptic $\SG$ FIOs of type II and type I, respectively.
\end{thm}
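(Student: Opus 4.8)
The plan is to construct the parametrix for the elliptic type~I operator $A=\op_\varphi(a)$ by the standard Neumann-series argument, using the composition theorems of Subsections~\ref{subs:2.3} and~\ref{subs:2.4} in place of the classical $\SG$ calculus. First I would take $b\in\SG^{(1/\omega)}_{1,1}(\rr{2d})$ to be a symbol with $a\,b\asymp 1$ modulo lower order (such $b$ exists by $\SG$-ellipticity of $a$, exactly as in the pseudo-differential case, and $1/\omega$ lies in the appropriate $\mascP_{1,1}$ class). The candidate ``first approximation'' to a right parametrix is the type~II operator $B_0=\op^*_\varphi(b)$. By Theorem~\ref{thm:3.4}, applicable because $\omega$ and hence $1/\omega$ is $(\phi_2,2)$-invariant for the diffeomorphism $\phi_2\colon\xi\mapsto(\varphi'_x)^{-1}(x,\xi)$ under the $\phi$-invariance hypothesis, the composition $A\circ B_0=\op_\varphi(a)\circ\op^*_\varphi(b)$ is a pseudo-differential operator $\op(c)$ with $c\in\SG^{(\omega_0)}_{1,1}(\rr{2d})$ where $\omega_0(x,\xi)=\omega(x,\phi_2(x,\xi))\cdot\bigl(1/\omega\bigr)(x,\phi_2(x,\xi))\asymp 1$; moreover the leading term of the asymptotic expansion is $1$, so $\op(c)=I-\op(r)$ with $r\in\SG^{(\vartheta_{-1,-1})}_{1,1}$, a symbol of negative order in both variables.

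Next I would invert $I-\op(r)$ by the Neumann series: by the symbolic calculus for the generalized $\SG$ classes (and the asymptotic-summation Proposition~\ref{propasymp}), there is $s\in\SG^{0,0}_{1,1}(\rr{2d})$ with $\op(s)\circ(I-\op(r))=I\bmod\op(\mathscr S)$, obtained by asymptotically summing $\sum_k\op(r)^k$, whose symbols live in $\SG^{(\vartheta_{-k,-k})}_{1,1}$ and hence satisfy the hypotheses of Proposition~\ref{propasymp} with $s_j,\sigma_j\to-\infty$. Then $B:=B_0\circ\op(s)$ is, by Theorem~\ref{thm:3.2} (composition of a type~II operator with a pseudo-differential operator, again using that the relevant weight is suitably invariant, here trivially since $s$ has weight $\vartheta_{0,0}$, cf.\ Remark~\ref{rem:trivweight}), a type~II $\SG$ FIO $\op^*_\varphi(\widetilde b)$ with $\widetilde b\in\SG^{(1/\omega_0')}_{1,1}$ for an appropriate admissible weight, and $A\circ B=I\bmod\op(\mathscr S)$, i.e.\ $B$ is a right parametrix. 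The same scheme, starting instead from $B_0\circ A=\op^*_\varphi(b)\circ\op_\varphi(a)$ and invoking Theorem~\ref{thm:3.5} (which requires $\varphi\in\Phr$, available here, and $(\phi_1,1)$-invariance of the weights, supplied by the $\phi$-invariance hypothesis) together with Theorem~\ref{thm:3.3}, produces a left parametrix; a standard argument then shows the two parametrices agree modulo $\op(\mathscr S)$, so $B$ is a two-sided parametrix of type~II. The case of $\op^*_\varphi(a)$ is entirely symmetric: by Remark~\ref{rem:sgsymm} one has $\op^*_\varphi(a)=\op_\varphi(a)^*$, so a parametrix for $\op_\varphi(a)$ yields, by taking adjoints, a parametrix for $\op^*_\varphi(a)$, which is then a type~I $\SG$ FIO (using again that type~I and type~II operators are formal adjoints of one another).

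The main obstacle I anticipate is bookkeeping of the weights along the construction: each composition step replaces the current weight $\omega$ by a new one built through the maps $\Theta_{j,\varphi}$ or the diffeomorphisms $\phi_1,\phi_2$, and one must check at every stage that the resulting weight is still $\SG$-moderate of the right orders (via Lemma~\ref{lemma:omegainv} and the Lemma on $\Theta_{j,\varphi}\omega$) and retains the invariance properties needed to apply the next composition theorem. The hypothesis that $\omega$ is $\phi$-invariant with $\phi=(\phi_1,\phi_2)$ is precisely what makes this closed; nonetheless verifying the compatibility condition \eqref{WeightPhaseCond} survives each step, and that the amplitudes produced by the asymptotic expansions \eqref{eq:0.3}, \eqref{eq:3.38} indeed land in the claimed classes, is where the real work lies. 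The ellipticity/invertibility of the intermediate pseudo-differential factor is guaranteed by Lemma~\ref{lemma:3.15}, so that part is automatic; likewise, the convergence of the Neumann series in the asymptotic sense is handled once and for all by Proposition~\ref{propasymp}. I would therefore organize the proof as: (i) construct the rough inverse amplitude $b$; (ii) compose and reduce to $I-\op(r)$ via Theorem~\ref{thm:3.4}; (iii) sum the Neumann series via Proposition~\ref{propasymp}; (iv) recompose via Theorem~\ref{thm:3.2} to recognize the parametrix as a type~II FIO; (v) repeat on the other side using Theorems~\ref{thm:3.5} and~\ref{thm:3.3}; (vi) pass to adjoints for $\op^*_\varphi(a)$.
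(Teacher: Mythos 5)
Your construction is correct and uses exactly the toolkit the paper has in mind: the paper gives no detailed argument for Theorem \ref{thm:3.8}, stating only that parametrices are ``achieved in the usual way, by means of the composition theorems'' of Subsections \ref{subs:2.3}--\ref{subs:2.4}, with Lemma \ref{lemma:3.15} supplying the key ellipticity, and the extra hypotheses being precisely the weight-invariance conditions you track. The one genuine difference is the choice of first approximation: the route suggested by the paper (and by the classical $\SG$ FIO construction in \cite{coriasco}) takes $A^*=\op^*_\varphi(a)$ itself, uses Lemma \ref{lemma:3.15} (i.e.\ Theorems \ref{thm:3.4} and \ref{thm:3.5}) to see that $A\circ A^*$ and $A^*\circ A$ are $\SG$-elliptic pseudo-differential operators, inverts these modulo $\op(\mathscr S)$ by the pseudo-differential parametrix construction, and then recognizes $A^*\circ\op(q_1)$ and $\op(q_2)\circ A^*$ as type II FIOs via Theorems \ref{thm:3.2} and \ref{thm:3.3}; you instead start from $\op^*_\varphi(b)$ with $b$ a rough inverse of $a$ and run a Neumann series. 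The two are essentially equivalent, and your version has the mild advantage that all intermediate weights are of order $(0,0)$, so the invariance hypotheses needed for the later compositions become trivial (Remark \ref{rem:trivweight}). One small imprecision: with $a\,b\asymp 1$ the leading term of $\op_\varphi(a)\circ\op^*_\varphi(b)$ is not $1$ but an elliptic symbol of order $(0,0)$ involving the factor $|\det\varphi''_{x\xi}|^{-1}$ evaluated at $\eta=(\varphi'_x)^{-1}(x,\xi)$ (compare \eqref{eq:3.72.3bis} with $p=1$); this is harmless -- either absorb the Jacobian into the choice of $b$, or note that an elliptic order-$(0,0)$ pseudo-differential factor is itself invertible modulo $\op(\mathscr S)$, which is what your Neumann-series step actually inverts -- but as written the claim ``$\op(c)=I-\op(r)$ with $r$ of order $(-1,-1)$'' needs that adjustment. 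With that repaired, the remaining steps (agreement of left and right parametrices modulo $\op(\mathscr S)$, passage to adjoints via Remark \ref{rem:sgsymm} for $\op^*_\varphi(a)$, and ellipticity of the resulting type II amplitude) are exactly the standard bookkeeping the paper delegates to \cite{CoTo2}.
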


\noindent
As usual, in the next two results we need
the canonical transformation $\phi\colon(x,\xi)\mapsto(y,\eta)$ generated by
the phase function $\varphi$, namely
\begin{equation}\label{eq:phi}
	\begin{cases}
		\xi=\varphi^\prime_x(x,\eta)
		\\
		y=\varphi^\prime_\xi(x,\eta).
	\end{cases}
\end{equation}

\begin{thm}
\label{thm:3.21}
Let $A = \op_{\varphi}(a)$ be an $\SG$ FIO of type I with
$a \in \SG^{(\omega_0)}_{1,1}(\rr{2d})$ and $P = \Op{p}$  a pseudo-differential operator with
$p \in \SG^{(\omega)}_{1,1}(\rr{2d})$. Assume that $\omega$ is $\phi$-invariant,
where $\phi$ is the canonical transformation \eqref{eq:phi}, associated with $\varphi$. Assume
also that $\omega_0$ is $(\tilde{\phi},2)$-invariant, where $\tilde{\phi}\colon\xi\mapsto(\varphi '_{x})^{-1}(x,\xi)$.
Then, setting $\eta = (\varphi '_{x})^{-1}(x,\xi)$ we have
\begin{equation}
\label{eq:3.72.3bis}
\begin{aligned}
\Sym{A\circ P\circ A^{*}}(x,\xi) &= p( \varphi^\prime_{\xi}(x, \eta), \eta)\,|a(x,\eta)|^2\,
|\det \varphi^{\prime\prime}_{x\xi}(x,\eta)|^{-1}
\\
& \mod \SG^{(\widetilde{\omega}\cdot\vartheta_{-1,-1})}_{1,1}(\rr{2d}),
\end{aligned}
\end{equation}
which is an element of $\SG^{(\widetilde{\omega})}_{1,1}(\rr{2d})$ with 
\[
	\widetilde{\omega}(x,\xi)=\omega(\phi(x,\xi))\cdot
	\omega_0(x,(\varphi '_{x})^{-1}(x,\xi))^2.
\]
\end{thm}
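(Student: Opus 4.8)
The strategy is to chain together the composition theorems from Subsections \ref{subs:2.3} and \ref{subs:2.4}, keeping careful track of the weights and of their invariance properties. First I would compose $A\circ P$. Since $a\in\SG^{(\omega_0)}_{1,1}$ and $p\in\SG^{(\omega)}_{1,1}$, Theorem \ref{thm:3.1} applies (with $r_2=1$, $\rho_2=\rho_1=1$, so that we land in the case with remainder in $\op(\mathscr S)$), provided $\omega$ is $(\phi_1,1)$-invariant with respect to $x\mapsto\varphi'_\xi(x,\xi)$; this is guaranteed by the hypothesis that $\omega$ is $\phi$-invariant for the canonical transformation \eqref{eq:phi}. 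We obtain $A\circ P=\op_\varphi(c')\bmod\op(\mathscr S)$ with $c'\in\SG^{(\omega_1')}_{1,1}$, where $\omega_1'=\omega_0\cdot(\Theta_{1,\varphi}\omega)$. By the principal-symbol part of the asymptotic expansion \eqref{eq:0.3} (applied with $p$ and $a$ transposed, then transposed back), the leading term of $c'$ is $a(x,\xi)\,p(x,\varphi'_x(x,\xi))$ up to lower order.

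**Second composition and the symbol of $A\circ P\circ A^{*}$.** Next I would compose $\op_\varphi(c')$ with $A^{*}=\op_\varphi^*(a)$, now a type I followed by a type II operator, so Theorem \ref{thm:3.4} is the relevant tool. Its hypotheses require $\omega_1'$ and $\omega_0$ to be $(\phi,2)$-invariant with respect to $\xi\mapsto(\varphi'_x)^{-1}(x,\xi)=\tilde\phi$; the invariance of $\omega_0$ is assumed directly, and the invariance of $\omega_1'=\omega_0\cdot(\Theta_{1,\varphi}\omega)$ should follow by combining the $\tilde\phi$-invariance of $\omega_0$ with the $\phi$-invariance of $\omega$ and the relation \eqref{WeightPhaseCond} linking $\Theta_{1,\varphi}\omega$ and $\Theta_{2,\varphi}\omega$. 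Theorem \ref{thm:3.4} then gives $A\circ P\circ A^{*}=\op(c)$ exactly (no remainder modulo smoothing is hidden, the composition is an honest pseudodifferential operator) with $c\in\SG^{(\widetilde\omega)}_{1,1}$, where $\widetilde\omega(x,\xi)=\omega_1'(x,\tilde\phi(x,\xi))\cdot\omega_0(x,\tilde\phi(x,\xi))$. Unwinding $\omega_1'$ and writing $\eta=\tilde\phi(x,\xi)=(\varphi'_x)^{-1}(x,\xi)$, one checks that $\Theta_{1,\varphi}\omega$ evaluated at $(x,\eta)$ equals $\omega(\varphi'_\xi(x,\eta),\eta)=\omega(\phi(x,\xi))$ by \eqref{eq:phi}, which yields exactly $\widetilde\omega(x,\xi)=\omega(\phi(x,\xi))\cdot\omega_0(x,(\varphi'_x)^{-1}(x,\xi))^2$ as claimed.

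**Computing the principal symbol.** For the explicit formula \eqref{eq:3.72.3bis} I would feed $c_0(x,y,\xi)=c'(x,\xi)\overline{a(y,\xi)}\chi(x,y)$ into the asymptotic expansion of Theorem \ref{thm:3.4}. The leading term ($\alpha=0$) is $(S_\varphi c_0)(x,x,\xi)$. By the definition \eqref{SvarphiDef} of $S_\varphi$, when $x=y$ the defining relation for $\Phi$ reduces to $\varphi'_x(x,\Phi(x,x,\xi))=\xi$, i.e. $\Phi(x,x,\xi)=(\varphi'_x)^{-1}(x,\xi)=\eta$, and the Jacobian factor becomes $|\det\Phi'_\xi(x,x,\xi)|=|\det\varphi''_{x\xi}(x,\eta)|^{-1}$. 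Substituting the principal part of $c'$, namely $a(x,\eta)\,p(x,\varphi'_x(x,\eta))=a(x,\eta)\,p(x,\xi)$, and recalling $\xi=\varphi'_x(x,\eta)$ forces the argument of $p$ in the final expression to be read off correctly; combining with $\overline{a(x,\eta)}$ and noting $p(\varphi'_\xi(x,\eta),\eta)$ should replace $p(x,\xi)$ once we also track the first-order correction from Theorem \ref{thm:3.1} (the $\op_\varphi(c')$ symbol actually carries $p$ evaluated along the graph of $\varphi'_x$, which is why $\varphi'_\xi$ enters). This gives the stated leading term $p(\varphi'_\xi(x,\eta),\eta)\,|a(x,\eta)|^2\,|\det\varphi''_{x\xi}(x,\eta)|^{-1}$, with all remaining terms of order $\vartheta_{-1,-1}$ relative to $\widetilde\omega$, since each summand in the expansion picks up one extra $D^\alpha_yD^\alpha_\xi$, hence a gain of $\eabs x^{-|\alpha|}\eabs\xi^{-|\alpha|}$.

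**Main obstacle.** The routine part is the symbol bookkeeping; the delicate point is verifying that the weight $\omega_1'=\omega_0\cdot(\Theta_{1,\varphi}\omega)$ produced by the first composition genuinely satisfies the $(\tilde\phi,2)$-invariance hypothesis required to apply Theorem \ref{thm:3.4} in the second step, and that it lies in the correct class $\mascP_{1,1}(\rr{2d})$ — here one must use Lemma \ref{lemma:omegainv}, the invariance assumptions on $\omega$ and $\omega_0$, and the compatibility condition \eqref{WeightPhaseCond} in an essential way. A secondary subtlety is confirming that the two ways of evaluating the composite weight (once via $\Theta_{1,\varphi}$ at the point $(x,\eta)$, once via the canonical transformation $\phi$) agree, which is precisely the content of \eqref{eq:phi} and must be stated cleanly to land on the asserted form of $\widetilde\omega$.
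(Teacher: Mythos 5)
Your overall strategy is the one the paper intends: chain Theorem \ref{thm:3.1} (for $A\circ P$) with Theorem \ref{thm:3.4} (for the subsequent composition with $A^{*}$), read off the $\alpha=0$ term of the expansion through $S_\fy$ evaluated at $y=x$, where $\Phi(x,x,\xi)=(\varphi'_x)^{-1}(x,\xi)=\eta$ and $|\det\Phi'_\xi(x,x,\xi)|=|\det\varphi''_{x\xi}(x,\eta)|^{-1}$, and track the weights; your computation of $\widetilde\omega$ is also correct. The paper itself only records this route (the detailed computations being in \cite{CoTo2}), so the architecture of your argument is fine.

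There is, however, a genuine error in the principal-symbol bookkeeping of the first step. For $A\circ P=\op_\varphi(a)\circ\op(p)$, with the pseudo-differential factor on the \emph{right}, the leading term of the symbol $c'$ is $a(x,\xi)\,p(\varphi'_\xi(x,\xi),\xi)$, not $a(x,\xi)\,p(x,\varphi'_x(x,\xi))$ as you assert: the expression you quote is the $\alpha=0$ term of \eqref{eq:0.3}, which belongs to the composition $\op(p)\circ\op_\varphi(a)$ of Theorem \ref{thm:0.1}; in Theorem \ref{thm:3.1} the expansion must be transposed together with the phase (cf.\ Remark \ref{rem:sgsymm}), or obtained directly by stationary phase in the composed kernel, where the critical point sits at $z=\varphi'_\xi(x,\eta)$, $\xi=\eta$, so that $p$ is evaluated at $(\varphi'_\xi(x,\eta),\eta)$. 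This is not a harmless slip: $p(x,\varphi'_x(x,\eta))=p(x,\xi)$ and $p(\varphi'_\xi(x,\eta),\eta)$ differ at order zero for a general symbol and a general regular phase, so the discrepancy cannot be attributed, as you do, to ``the first-order correction from Theorem \ref{thm:3.1}''; with your leading term the chain would produce $p(x,\xi)\,|a(x,\eta)|^2\,|\det\varphi''_{x\xi}(x,\eta)|^{-1}$, which is not \eqref{eq:3.72.3bis}. With the correct leading term $a(x,\eta)\,p(\varphi'_\xi(x,\eta),\eta)$, the asserted formula drops out of the $\alpha=0$ term of Theorem \ref{thm:3.4} with no further adjustment, and the remaining terms gain $\vartheta_{-1,-1}$ exactly as you say. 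A secondary point you correctly flag but leave open, and which should be carried out, is the verification that the intermediate weight $\omega_0\cdot(\Theta_{1,\fy}\omega)$ lies in $\mascP_{1,1}(\rr{2d})$ and is $(\tilde{\phi},2)$-invariant (via Lemma \ref{lemma:omegainv}, \eqref{WeightPhaseCond} and the assumed invariances), since Theorem \ref{thm:3.4} cannot be invoked in the second step without it.
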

\begin{thm}
\label{thm:3.21ell}
Let $A = \op_{\varphi}(a)$ be an elliptic $\SG$ FIO of type I with
$a \in \SG^{(\omega_0)}_{1,1}(\rr{2d})$ and $P = \Op{p}$  a pseudo-differential operator with
$p \in \SG^{(\omega)}_{1,1}(\rr{2d})$. Assume that $\omega$ is $\phi$-invariant,
where $\phi$ is the canonical transformation \eqref{eq:phi}, associated with 
$\varphi$. Then,  we have
\begin{equation}
\label{eq:3.72.3}
\Sym{A\circ P\circ A^{-1}}(x,\xi) = p( \phi(x,\xi)) \!\!\mod 
\SG^{(\widetilde{\omega}\cdot\vartheta_{-1,-1})}_{1,1}(\rr{d}),
\end{equation}
with $\widetilde{\omega}(x,\xi)=\omega(\phi(x,\xi))$.
\end{thm}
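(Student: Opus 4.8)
The plan is to derive Theorem \ref{thm:3.21ell} from the more general Theorem \ref{thm:3.21} by exploiting the ellipticity of $A = \op_\varphi(a)$, which removes all the auxiliary invariance hypotheses on $\omega_0$ and collapses the weight $\widetilde\omega$ in \eqref{eq:3.72.3bis} to the cleaner form $\widetilde\omega = \omega\circ\phi$. First I would recall, via Theorem \ref{thm:3.8}, that an elliptic type I $\SG$ FIO $A = \op_\varphi(a)$ with $a\in\SG^{(\omega_0)}_{1,1}(\rr{2d})$ admits a parametrix which is an elliptic type II $\SG$ FIO; more precisely, modulo smoothing operators, $A^{-1} = \op_\varphi^*(b)$ for a suitable $\SG$-elliptic $b\in\SG^{(1/\widetilde\omega_0)}_{1,1}(\rr{2d})$, where $\widetilde\omega_0$ is built from $\omega_0$ by composition with the diffeomorphism $\phi_1$ of Proposition \ref{prop:3.2}. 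The key structural point is then that $A\circ P\circ A^{-1}$ differs from $A\circ P\circ A^*$ only by a right composition with $A^*\circ(A^{-1})^{-*}$-type corrections; more directly, one writes $A^{-1} = Q\circ A^*$ modulo smoothing, where $Q = \Op{q}$ is the elliptic $\SG$ pseudo-differential operator obtained from Lemma \ref{lemma:3.15} as (the parametrix of) $A\circ A^*$, so that $q$ has weight $1/(\omega_0^2\circ\text{(appropriate diffeo)})$.

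Next I would compute $A\circ P\circ A^{-1} = A\circ P\circ Q\circ A^*$ modulo smoothing. By the $\SG$ pseudo-differential calculus, $P\circ Q = \Op{p\#q}$ with principal symbol $p\cdot q$, and then Theorem \ref{thm:3.21} applies to $A\circ(P\circ Q)\circ A^*$: its principal symbol is $(p\cdot q)(\varphi'_\xi(x,\eta),\eta)\,|a(x,\eta)|^2\,|\det\varphi''_{x\xi}(x,\eta)|^{-1}$ with $\eta = (\varphi'_x)^{-1}(x,\xi)$, modulo one order lower. The ellipticity is used precisely here: $q$ is, modulo one order, the reciprocal of the principal symbol of $A\circ A^*$, which by Theorem \ref{thm:3.21} (applied with $P = \mathrm{Id}$, $p\equiv1$) equals $|a(x,\eta)|^2\,|\det\varphi''_{x\xi}(x,\eta)|^{-1}$ evaluated with the same change of variables. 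Hence the factors $|a(x,\eta)|^2\,|\det\varphi''_{x\xi}(x,\eta)|^{-1}$ cancel, leaving exactly $p(\varphi'_\xi(x,\eta),\eta) = p(\phi(x,\xi))$ as the principal symbol, which is \eqref{eq:3.72.3}. Tracking the weights through this cancellation shows $\widetilde\omega = \omega\circ\phi$, since the $\omega_0$-contributions cancel as well, and the $\phi$-invariance of $\omega$ (the only invariance hypothesis retained) guarantees via Lemma \ref{lemma:omegainv} and Remark \ref{rem:3.2} that $\omega\circ\phi\in\mascP_{1,1}(\rr{2d})$, so the right-hand side genuinely lies in $\SG^{(\widetilde\omega)}_{1,1}$.

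I expect the main obstacle to be bookkeeping the lower-order remainder terms correctly through the two compositions and the cancellation, so that the final error is genuinely in $\SG^{(\widetilde\omega\cdot\vartheta_{-1,-1})}_{1,1}$ and not merely one order lower in a weight that still carries spurious $\omega_0$-factors. Concretely: the parametrix identity $A\circ A^{-1} = \mathrm{Id}$ holds only modulo smoothing, and $q$ is only an asymptotic reciprocal of the symbol of $A\circ A^*$, so the "cancellation" of $|a(x,\eta)|^2|\det\varphi''_{x\xi}(x,\eta)|^{-1}$ produces a factor of the form $1 + (\text{order }-1)$ multiplying $p(\phi(x,\xi))$; one must check that $(\text{order }-1)\cdot p(\phi(x,\xi))$ lands in the claimed remainder class, using that $p\circ\phi\in\SG^{(\omega\circ\phi)}_{1,1}$. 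A clean way to organize this is to apply Theorem \ref{thm:3.21} once to $A\circ P\circ A^*$ and once to $A\circ A^*$ (with $p = 1$), obtaining symbols $\sigma_1$ and $\sigma_2$ respectively with $\sigma_2$ elliptic of weight $\widetilde\omega_0 := \omega_0(x,(\varphi'_x)^{-1}(x,\cdot))^2$, then use the $\SG$ calculus to write $\Sym{A\circ P\circ A^{-1}} = \Sym{(A\circ P\circ A^*)\circ(A\circ A^*)^{-1}} = \sigma_1\#\sigma_2^{-\#}$, whose principal part is $\sigma_1/\sigma_2 = p(\phi(x,\xi))$ modulo $\SG^{(\widetilde\omega\cdot\vartheta_{-1,-1})}_{1,1}$, with $\widetilde\omega = \omega\circ\phi$ after the $\widetilde\omega_0$-factors cancel. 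The routine verifications — invertibility of $\sigma_2$ in the calculus, the weight identity $\widetilde\omega\cdot\widetilde\omega_0/\widetilde\omega_0 = \omega\circ\phi$, and membership of $\omega\circ\phi$ in $\mascP_{1,1}$ — I would leave to the reader or dispatch by citing Remark \ref{rem:3.2} and the composition results of Subsection \ref{subs:1.3}.
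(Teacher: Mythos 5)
Your overall strategy is viable and, since the paper itself gives no proof of Theorem \ref{thm:3.21ell} (it refers to \cite{CoTo2} and to ``the usual way, by means of the composition theorems'', i.e.\ composing $A\circ P$ with the type~II parametrix of Theorem \ref{thm:3.8} via Theorems \ref{thm:3.1} and \ref{thm:3.4}), your reduction to Theorem \ref{thm:3.21} is a genuinely different and legitimate organization: write $A\circ P\circ A^{-1}=(A\circ P\circ A^{*})\circ(A\circ A^{*})^{-1}$ modulo smoothing, apply Theorem \ref{thm:3.21} twice (once with $p$, once with $p\equiv 1$), invert the elliptic pseudo-differential factor $A\circ A^{*}$ (Lemma \ref{lemma:3.15}) in the $\SG$ calculus, and divide. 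The weight bookkeeping then gives $\widetilde\omega_1/\widetilde\omega_0=\omega\circ\phi$ and a remainder in $\SG^{(\widetilde\omega\cdot\vartheta_{-1,-1})}_{1,1}$, as in \eqref{eq:3.72.3}; the gain over the route indicated by the paper is that no explicit type~II parametrix amplitude is ever manipulated, only Theorem \ref{thm:3.21} and elliptic pseudo-differential inversion.

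Two points need repair or qualification. First, the factorization in your second paragraph is wrong as stated: with $Q$ a parametrix of $A\circ A^{*}$ one has $A^{-1}=A^{*}\circ Q$ (or $A^{-1}=Q'\circ A^{*}$ with $Q'$ a parametrix of $A^{*}\circ A$), but not $A^{-1}=Q\circ A^{*}$. The ``cancellation'' claimed there accordingly conflates evaluation points: applying Theorem \ref{thm:3.21} to $A\circ(P\circ Q)\circ A^{*}$ yields, at leading order, $(pq)(\phi(x,\xi))\,m(x,\xi)$ with $m(x,\xi)=|a(x,\eta)|^{2}\,|\det\varphi''_{x\xi}(x,\eta)|^{-1}$, $\eta=(\varphi'_x)^{-1}(x,\xi)$, and for $q\approx 1/m$ one gets $q(\phi(x,\xi))\approx 1/m(\phi(x,\xi))\neq 1/m(x,\xi)$, so the $m$-factors do not cancel for that choice of $Q$ (they do if $Q'=(A^{*}\circ A)^{-1}$ is used, or, most cleanly, in your third-paragraph formulation $\sigma_1\#\sigma_2^{-\#}$, where both $\sigma_1$ and $\sigma_2$ come from Theorem \ref{thm:3.21} and hence carry the factor $m$ at the \emph{same} point). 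That last organization is correct and should replace the second paragraph. Second, the claim that ellipticity ``removes all the auxiliary invariance hypotheses on $\omega_0$'' is an overstatement: your argument invokes Theorem \ref{thm:3.8}, Lemma \ref{lemma:3.15} and Theorem \ref{thm:3.21}, each of which carries invariance conditions on $\omega_0$ (e.g.\ $(\tilde\phi,2)$-invariance with $\tilde\phi\colon\xi\mapsto(\varphi'_x)^{-1}(x,\xi)$); ellipticity does not remove them --- they are implicitly presupposed by the very existence of $A^{-1}$ as an elliptic $\SG$ FIO, and you should say that you assume them.
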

%

\section{Continuity on Lebesgue and modulation spaces}\label{sec3}
%

In this section we recall some basic facts about continuity properties for
Fourier integral operators when acting on Lebesgue and modulation spaces.
We also use the analysis in previous sections in combination with certain
lifting properties for modulation spaces in order to establish weighted versions
of continuity results for Fourier integral operators on modulation spaces.

\par

\subsection{Continuity on Lebesgue spaces}

\par

We start by considering the following result, which, for trivial Sobolev
parameters, is related to Theorem 2.6 in \cite{CoRu}. A direct proof of the
$L^2(\rr{d})\to L^2(\rr{d})$ boundedness of $\op_\varphi(a)$
for $a\in\SG^{0,0}_{1,1}(\rr{d})$ and a regular phase function $\varphi\in\Phr$
was given in \cite{coriasco}. A similar argument actually holds 
for $a\in\SG^{0,0}_{r,\rho}(\rr{d})$, $r,\rho\ge0$, and is given in
\cite{CoTo2} (see also \cite{RuSu}). Here
$B_r(x_0)$ is the open ball with center at $x_0\in \rr d$ and radius $r$.

\par

\begin{thm}\label{FIOSobcont}
Let $\sigma _1,\sigma _2\in \mathbf R$, $p\in (1,\infty )$ and $m,\mu \in
\mathbf R$ be such that
$$
m \le -(d-1)\left | \frac 1p-\frac 12 \right | ,\quad \mu \le -(d-1)\left | \frac 1p-\frac 12
\right | +\sigma _1-\sigma _2.
$$
Also let $\fy \in \SG ^{1,1}_{1,1} (\rr {2d})$ be such that for some constants
$c>0$ and $R>0$ and every multi-index $\alpha$ it holds
\begin{alignat*}{2}
|\det \varphi ''_{x , \xi} (x,\xi )| &\ge c, & \qquad
|\partial _x^\alpha \fy (x, \xi )| & \lesssim \eabs x^{1-|\alpha |}\eabs \xi
\\[1ex]
\eabs {\fy '_x (x,\xi )} &\asymp \eabs \xi , & \qquad \eabs {\fy '_\xi (x,\xi )}
&\asymp \eabs x,
\intertext{and}
\fy (x,t\xi ) &= t\fy (x,\xi ),&\quad  x,\xi \in \rr d,\ |\xi | &\ge R,\ t\ge 1.
\end{alignat*}
If $a\in \SG ^{m,\mu}_{1,1}(\rr {2d})$ is supported outside $\rr d\times B_r(0)$
for some $r>0$, then $\op_\varphi(a)$ extends to a
continuous operator from $H^p_{\sigma _1}(\rr d)$ to $H^p_{\sigma _2}(\rr d)$.
\end{thm}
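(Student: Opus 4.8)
The plan is to reduce the statement to the known $L^2 \to L^2$ boundedness result (proved in \cite{coriasco,CoTo2}) by conjugating with suitable order-reducing pseudo-differential operators, and to handle the $H^p$ (rather than $H^2$) part by invoking the classical $L^p$-theory for (homogeneous, in $\xi$) Fourier integral operators away from the zero section, which is where the loss $(d-1)|1/p-1/2|$ comes from. First I would recall that $H^p_{\sigma}(\rr d) = \op(\vartheta_{0,-\sigma})(H^p(\rr d))$, with $\vartheta_{0,-\sigma}(x,\xi) = \eabs{\xi}^{-\sigma}$, and that $\op(\vartheta_{0,\sigma_2})$ is an isomorphism $H^p_{\sigma_2} \to H^p$ with inverse $\op(\vartheta_{0,-\sigma_2})$ (modulo smoothing), and similarly for $\sigma_1$; these are $\SG$ pseudo-differential operators of the type covered in Subsection \ref{subs:1.3}. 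Thus it suffices to show that
$$
\op(\vartheta_{0,\sigma_2}) \circ \op_\varphi(a) \circ \op(\vartheta_{0,-\sigma_1})
$$
is bounded $H^p \to H^p$.

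Next I would apply the composition Theorems \ref{thm:0.1} and \ref{thm:3.1} (composition of $\SG$ pseudo-differential operators on the left and right of a type I FIO): modulo a smoothing/rapidly-decaying remainder, the displayed composition equals $\op_\varphi(\tilde a)$ for some amplitude $\tilde a$ whose order is read off from the orders of the factors. Tracking weights, $\tilde a \in \SG^{m,\tilde\mu}_{1,1}(\rr{2d})$ with $m$ unchanged (the left/right conjugations only touch the $\xi$-behaviour since $\vartheta_{0,\pm\sigma}$ is $x$-independent and $\eabs{\varphi'_x(x,\xi)} \asymp \eabs{\xi}$) and
$$
\tilde\mu = \mu - \sigma_1 + \sigma_2 \le -(d-1)\Bigl|\tfrac 1p - \tfrac 12\Bigr|.
$$
Here I would also use that the remainder term (either in $\op_\varphi(\SG^{(\vartheta_{-\infty,0})}_0)$ or in $\op(\mathscr S)$) maps $\mathscr S' \to \mathscr S$, hence is trivially bounded $H^p \to H^p$; and I would keep the support condition $\supp a \subseteq \complement(\rr d \times B_r(0))$ propagating through the composition (multiplying by symbols that are $\equiv 1$ for large $\xi$ does not enlarge the $\xi$-support, or one simply absorbs the low-frequency part into the smoothing remainder since there $\eabs{\xi} \asymp 1$).

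Finally I would invoke the $L^p$-continuity of Fourier integral operators with symbol of order $-(d-1)|1/p-1/2|$ in the $\xi$-variable and order $\le 0$ in the $x$-variable (Seeger--Sogge--Stein / Tataru-type estimates in the global $\SG$ form), which applies because $\varphi$ is, for $|\xi| \ge R$, positively homogeneous of degree one in $\xi$, non-degenerate, and satisfies the $\SG$-estimates — this is precisely the content invoked via Theorem 2.6 in \cite{CoRu} for trivial Sobolev parameters. For $|\xi| \le R$ the operator is, after the support cut, already an $\SG$ smoothing-in-$\xi$ operator and causes no trouble. Putting the pieces together: $\op_\varphi(\tilde a) \colon L^p = H^p \to L^p = H^p$ is bounded, hence unwinding the conjugation gives $\op_\varphi(a) \colon H^p_{\sigma_1} \to H^p_{\sigma_2}$.

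The main obstacle I expect is the bookkeeping at the interface between the $\SG$-global calculus and the \emph{homogeneous} $L^p$-theory: one must be careful that the phase only needs homogeneity for large $|\xi|$, so the low-frequency region has to be peeled off cleanly (using the support hypothesis on $a$ and absorbing the rest into smoothing remainders), and one must verify that the composition theorems are applicable with the stated $r,\rho$ parameters all equal to $1$ and that the weight $\vartheta_{0,\pm\sigma}$ is trivially $(\phi,i)$-invariant (Remark \ref{rem:trivweight}), so the invariance hypotheses of Theorems \ref{thm:0.1}--\ref{thm:3.1} are automatic. The genuinely analytic input — the sharp $(d-1)|1/p-1/2|$ loss — is quoted, not reproved.
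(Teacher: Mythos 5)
Your proposal is correct and takes essentially the same route as the paper's proof: conjugate with $\eabs D^{\sigma_2}$ and $\eabs D^{-\sigma_1}$, use the pseudo-differential/type-I composition theorems to rewrite the conjugated operator as $\op_\varphi(\tilde a)$ with $\tilde a\in\SG^{m,\mu-\sigma_1+\sigma_2}_{1,1}$ still supported outside $\rr d\times B_r(0)$ modulo an operator with Schwartz kernel, and conclude by \cite[Theorem 2.6]{CoRu}. Your citation of Theorems \ref{thm:0.1} and \ref{thm:3.1} is in fact the appropriate pair for the left and right compositions, and the only blemish is the opening remark about reducing to the $L^2\to L^2$ result, which plays no role in your (or the paper's) actual argument since the analytic input used is the $L^p$ theorem of \cite{CoRu}.
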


\par

\begin{proof}
Let $T = \eabs D^{\sigma _2}\circ \op_\varphi(a) \circ \eabs D^{-\sigma _1}$. Since
$$
\eabs D^{\sigma _2}\, : \, H^p_{\sigma _2}\to L^p\quad \text{and}\quad
\eabs D^{-\sigma _1}\, : \, L^p\to H^p_{\sigma _1}
$$
are continuous bijections, the result follows if we prove that $T$ is continuous
on $L^p$.

\par

By Theorems \ref{thm:3.1} and \ref{thm:3.2} it follows that
$$
T=\op_\varphi(a_1)\mod \op (\sS ),
$$
where $a_1\in \SG ^{m,\mu_0}_{1,1}(\rr {2d})$ with
$$
\mu _0\le -(d-1)\left | \frac 1p-\frac 12 \right |.
$$
Furthermore, by the symbolic calculus and the fact that $a$ is supported
outside $\rr d\times B_r(0)$ we get
$$
\op_\varphi(a_1) = \op_\varphi(a_2)\mod \op (\sS ),
$$
where $a_2\in \SG ^{m,\mu_0}_{1,1}(\rr {2d})$ is supported
outside $\rr d\times B_r(0)$. Hence
$$
T=\op_\varphi(a_2)+\op (c),
$$
where $c\in \sS$, giving that $\op (c)$ is continuous on $L^p$.

\par

Since $\op_\varphi(a_2)$ is continuous on $L^p$, by
\cite[Theorem 2.6]{CoRu} and its proof, the result follows.
\end{proof}

\par

\subsection{Continuity on modulation spaces}

\par

Next we consider continuity properties on modulation spaces. The following result
extends Theorem 1.2 in \cite{CorNicRod1}. Here we let $M^\infty _{0,(\omega )}
(\rr d)$ be the completion of $\mathscr S(\rr d)$ under the norm $\nm
\cdo {M^\infty _{(\omega )}}$. We also say that a (complex valued) Gauss function
$\Psi$ is non-degenerate, if $|\Psi |$ tends to zero at infinity. 

\par

\begin{thm}\label{FIOmodcont}
Let $m,\mu \in \mathbf R$ and $1\le p<\infty$ be such that
$$
m\le -d\left | \frac 12-\frac 1p  \right | ,\quad \mu \le -d\left | \frac 12-\frac 1p 
\right | ,
$$
and let $\omega _j\in \mascP _{1,1}(\rr {2d})$, $j=0,1,2$, be such that
$$
\omega _0(x,\xi ) \lesssim \frac {\omega _1(\varphi^\prime_\xi(x,\xi),\xi )}{\omega
_2(x,\varphi^\prime_x(x,\xi) )}\eabs x^m \eabs \xi ^\mu.
$$
Also let $a\in \SG ^{(\omega _0)}_{1,1}(\rr {2d})$ and $\fy\in\Phr$, and assume
that $\omega_j$ is $(\phi_j,j)$-invariant, $j=1,2$, with
$\phi_1\colon x\mapsto\varphi^\prime_\xi(x,\xi)$ 
and $\phi_2\colon\xi\mapsto\varphi^\prime_x(x,\xi)$.
Then $\op_\varphi(a)$ is uniquely extendable to a continuous
map from $M^p_{(\omega _1)}(\rr d)$ to $M^p_{(\omega _2)}(\rr d)$ and from
$M^\infty _{0,(\omega _1)}(\rr d)$ to $M^\infty _{0,(\omega _2)}(\rr d)$.
\end{thm}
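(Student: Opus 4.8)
The plan is to reduce the weighted statement to the unweighted case $\omega_1=\omega_2=1$, $\omega_0=\vartheta_{m,\mu}$, which is exactly (an extension of) Theorem 1.2 in \cite{CorNicRod1}, via the lifting technique for modulation spaces. First I would choose $\SG$ elliptic pseudo-differential operators realizing the isomorphisms $\op(q_1)\colon M^p(\rr d)\to M^p_{(\omega_1)}(\rr d)$ and $\op(q_2)\colon M^p_{(\omega_2)}(\rr d)\to M^p(\rr d)$, with $q_1\in\SG^{(1/\omega_1)}_{1,1}(\rr{2d})$ and $q_2\in\SG^{(\omega_2)}_{1,1}(\rr{2d})$; by the remarks in Subsection \ref{subs:1.3} such operators exist and are continuous bijections between the relevant weighted and unweighted modulation spaces, with the analogous statement for $M^\infty_{0,(\omega)}$. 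It then suffices to show that the conjugated operator $T:=\op(q_2)\circ\op_\varphi(a)\circ\op(q_1)$ is continuous from $M^p(\rr d)$ to $M^p(\rr d)$ (and from $M^\infty_0$ to $M^\infty_0$), since $\op_\varphi(a)=\op(q_2)^{-1}\circ T\circ\op(q_1)^{-1}$ composed with the isomorphisms recovers the claimed mapping property.

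Next I would compute $T$ using the composition Theorems \ref{thm:3.1} and \ref{thm:0.1} (composition of a type I $\SG$ FIO with a pseudo-differential operator on the right and on the left). Applying Theorem \ref{thm:3.1} to $\op_\varphi(a)\circ\op(q_1)$ produces a type I operator $\op_\varphi(\widetilde a_1)$ modulo a smoothing (type I) remainder, with $\widetilde a_1\in\SG^{(\omega_0\cdot\Theta_{1,\varphi}(1/\omega_1))}_{1,1}(\rr{2d})$; here the $(\phi_1,1)$-invariance of $\omega_1$ is precisely what Theorem \ref{thm:3.1} requires in order for the composed weight to lie in the correct class $\mascP_{1,1}$, and $\Theta_{1,\varphi}(1/\omega_1)(x,\xi)=1/\omega_1(\varphi'_\xi(x,\xi),\xi)$. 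Then applying Theorem \ref{thm:0.1} to $\op(q_2)\circ\op_\varphi(\widetilde a_1)$ produces $\op_\varphi(\widetilde a)$ modulo smoothing, with $\widetilde a\in\SG^{(\omega_0\cdot\Theta_{1,\varphi}(1/\omega_1)\cdot\Theta_{2,\varphi}\omega_2)}_{1,1}(\rr{2d})$, and here the $(\phi_2,2)$-invariance of $\omega_2$ is what guarantees the resulting weight is admissible; note $\Theta_{2,\varphi}\omega_2(x,\xi)=\omega_2(x,\varphi'_x(x,\xi))$. By the hypothesis
\begin{equation*}
\omega_0(x,\xi)\lesssim\frac{\omega_1(\varphi'_\xi(x,\xi),\xi)}{\omega_2(x,\varphi'_x(x,\xi))}\eabs x^m\eabs\xi^\mu,
\end{equation*}
the product weight $\omega_0\cdot\Theta_{1,\varphi}(1/\omega_1)\cdot\Theta_{2,\varphi}\omega_2$ is dominated by $\vartheta_{m,\mu}$, so $\widetilde a\in\SG^{m,\mu}_{1,1}(\rr{2d})$ after absorbing the constant.

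Finally I would invoke the unweighted continuity result: since $\widetilde a\in\SG^{m,\mu}_{1,1}(\rr{2d})$ with $m,\mu\le -d|1/2-1/p|$, the operator $\op_\varphi(\widetilde a)$ is continuous on $M^p(\rr d)$ by (the extension to $\SG^{m,\mu}_{1,1}$ of) Theorem 1.2 in \cite{CorNicRod1}, and the smoothing remainders, having Schwartz (or sufficiently rapidly decreasing $\SG$) symbols, are continuous on every modulation space; the same arguments apply verbatim on $M^\infty_{0,(\omega)}$ because the lifting operators restrict to isomorphisms there and the cited continuity result holds on $M^\infty_0$. Uniqueness of the extension follows from density of $\mathscr S(\rr d)$ in $M^p_{(\omega_1)}$ (and its $M^\infty_0$ analogue). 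I expect the main obstacle to be bookkeeping of the weights: one must check carefully that the hypotheses $(\phi_j,j)$-invariance feed correctly into Theorems \ref{thm:3.1} and \ref{thm:0.1} so that each intermediate weight genuinely belongs to $\mascP_{1,1}(\rr{2d})$, and that the non-moderate product weights arising transiently are still handled by the asymptotic-expansion machinery of Proposition \ref{propasymp}; the analytic core, i.e.\ the unweighted boundedness, is then entirely outsourced to \cite{CorNicRod1}.
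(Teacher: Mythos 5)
Your proposal is correct and follows essentially the same route as the paper: conjugate $\op_\varphi(a)$ with $\SG$-elliptic lifting isomorphisms between the weighted and unweighted modulation spaces, use the composition theorems (pseudo-differential on the left and on the right of a type I operator) to reduce to a type I operator with symbol in $\SG^{m,\mu}_{1,1}$, and then invoke Theorem 1.2 of \cite{CorNicRod1} together with the harmlessness of Schwartz-smoothing remainders and density of $\mathscr S$. The only cosmetic difference is the source of the lifting operators: the paper constructs them as Toeplitz-type operators via Theorem 1.1 of \cite{GT} and then identifies them as pseudo-differential operators with symbols $\omega_1^{-1}*\Phi\in\SG^{(1/\omega_1)}_{1,1}$ and $\omega_2*\Phi\in\SG^{(\omega_2)}_{1,1}$, whereas you quote directly the isomorphism statement recalled in Subsection \ref{subs:1.3}.
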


\par

\begin{proof}
Let $\Psi$ be a Gaussian, and let $T_1$ and $T_2$ be the operators, defined
by the formulas
$$
(T_1f,g) = (\omega _1^{-1}V_\Psi f,V_\Psi g)\quad \text{and}\quad(T_2f,g) =
(\omega _2V_\Psi f,V_\Psi g).
$$
Then it follows from Theorem 1.1 in \cite{GT} that $T_1$ and $T_2$
on $\mathscr S$ are uniquely extendable to continuous bijections between
$M^{p}$ to $M^{p}_{(\omega _1)}$, and from $M^{p}_{(\omega _2)}$
to $M^{p}$. Since $\cS$ is dense in $M^p_{(\omega _j)}$ and in
$M^\infty _{(\omega _j)}$, the result follows if we prove
$$
\nm {(T_2 \circ \op_\varphi(a)\circ T_1)f}{M^p}\lesssim \nm f{M^p},\qquad f\in \cS .
$$

\par

For some non-degenerate Gauss function $\Phi$ which depends on $\Psi$
we have
$$
T_j = \op ( a_j ),\ j=1,2,\quad \text{where}\quad
a_1= ((\omega _1)^{-1})*\Phi \quad \text{and}\quad
a_2 = \omega _2*\Phi .
$$
Furthermore, using the fact that $\omega _j\in \mascP _{1,1}$, it follows by
straight-forward computations that $a_1\in \SG ^{(1/\omega _1)}_{1,1}$ and
$a_2\in \SG ^{(\omega _2)}_{1,1}$.

\par

By using these facts in combination with Theorems \ref{thm:3.1} and
\ref{thm:3.2}, we get
$$
T_2 \op_\varphi(a)\circ T_1 =T_2\circ (\op_\varphi(h_1) +S_1) = \op_\varphi(h_2)+
S_2+T_2\circ S_1,
$$
for some operators $S_j\in \op (\cS )$, $j=1,2$, where
$$
h_1\in \SG ^{(\omega _0/\widetilde{\omega} _1)}_{1,1}\quad \text{and}\quad
h_2\in \SG ^{(\omega _0\widetilde{\omega} _2/\widetilde{\omega} _1)}_{1,1}\subseteq
\SG ^{m,\mu}_{1,1},
$$
$\widetilde{\omega}_1(x,\xi)=\omega_1(\varphi^\prime_\xi(x,\xi),\xi)$, 
$\widetilde{\omega}_2(x,\xi)=\omega_2(x,\varphi^\prime_x(x,\xi) )$. Since
$$
T_2\circ S_1 \in \op (\SG ^{(\omega _2)}_{1,1})\circ \op (\mathscr S)
\subseteq \op (\mathscr S),
$$
it follows that
$$
T_2 \circ \op_\varphi(a)\circ T_1 = \op_\varphi(h_2)+S_0,
$$
where $S_0\in \op (\cS )$, giving that $S_0$ is continuous on $M^p$.
Furthermore, the fact that $h_2\in \SG ^{m,\mu}_{1,1}$ and
Theorem 1.2 in \cite{CorNicRod1} imply that
$$
\nm {\op_\varphi(h_1)f}{M^{p}}\lesssim \nm f{M^{p}},\quad f\in \cS .
$$
This gives the result.
\end{proof}

\par

The continuity properties of $\SG$ pseudo-differential operators on
modulation spaces, as well as the propagation of the global
wave-front sets under their action, shortly recalled in the next
Section \ref{sec4}, motivate the next definition, originally given in \cite{CJT2}. 

\par

\begin{defn}\label{def:admspace}
Let $r,\rho \in [0,1]$, $t\in \mathbf R$, $\cB$ be a topological vector space
of distributions on $\rr d$ such that
$$
\mathscr S(\rr d)\subseteq \cB  \subseteq \mathscr S'(\rr d)
$$
with continuous embeddings. Then $\cB$ is called \emph{$\SG$-admissible
(with respect to $r$, $\rho$ and $d$)} when $\op _t(a)$ maps $\cB$ continuously
into itself, for every $a\in \SG ^{0,0}_{r,\rho}(\rr{d})$.
If $\cB$ and $\cC$ are $\SG$-admissible with respect to $r$, $\rho$ and $d$,
and $\omega _0\in \mathscr P_{r,\rho}(\rr {2d})$, then the pair $(\cB ,\cC )$ is
called \emph{$\SG$-ordered (with respect to $\omega _0$)}, when the mappings
$$
\op _t(a)\, :\, \cB \to \cC \quad \text{and}\quad \op _t(b)\, :\, \cC \to \cB
$$
are continuous for every $a\in \SG ^{(\omega _0)}_{r,\rho}(\rr {2d})$ and
$b\in \SG ^{(1/\omega _0)}_{r,\rho}(\rr {2d})$.
\end{defn}

The following definition, which extends Definition \ref{def:admspace} to the case of
generalized Fourier integral operators, is justified by Theorems \ref{FIOSobcont}
and \ref{FIOmodcont}.

\par

\begin{defn}\label{admspacesdef}
Let $\fy \in \SG ^{1,1}_{1,1}(\rr {2d})$ be a regular phase function,
and $\cB$, $\cB_1$, $\cB_2$, $\cC$, $\cC_1$, $\cC_2$, be
$\SG$-admissible with respect to $r$, $\rho$ and $d$. Let also $\omega _0,\omega_1,\omega_2\in
\mathscr P_{r,\rho}(\rr {2d})$, and $\Omega \subseteq \rr d$ be open. Then
the pair $(\cB ,\cC )$ is called \emph{weakly-I $\SG$-ordered (with respect to
$(r,\rho ,\omega _0,\varphi ,\Omega )$)}, when the mapping
$$
\op_\varphi(a)\, :\, \cB \to \cC
$$
is continuous for every $a\in \SG ^{(\omega _0)}_{r,\rho}(\rr {2d})$ which is
supported outside $\rr d\times \Omega$. Similarly, the pair $(\cB ,\cC )$ is called 
\emph{weakly-II $\SG$-ordered (with respect to
$(r,\rho ,\omega _0,\varphi ,\Omega )$)}, when the mapping
$$
\op_\varphi^*(b)\, :\, \cC \to \cB
$$
is continuous for every $b\in \SG ^{(\omega _0)}_{r,\rho}(\rr {2d})$
which is supported outside $\Omega\times\rr d$. Furthermore,
$(\cB_1, \cC_1, \cB_2, \cC_2)$ are called \emph{$\SG$-ordered
(with respect to $r$, $\rho$, $\omega _1$, $\omega_2$, $\varphi$,
and $\Omega$)}, when $(\cB_1,\cC_1)$ is a weakly-I 
$\SG$-ordered pair with respect to
$(r,\rho ,\omega _1,\varphi ,\Omega )$, and
$(\cB_2,\cC_2)$ is a weakly-II 
$\SG$-ordered pair with respect to
$(r,\rho ,\omega _2,\varphi ,\Omega )$.
\end{defn}

\par

\begin{rem}\label{remSGadm}
Let $\sigma _1$, $\sigma _2$, $p$, $m$ and $\mu$ be the same as in
Theorem \ref{FIOSobcont}. Then it follows that
$(H^p_{\sigma _1,\sigma _2},H^p_{\sigma_1-\mu,\sigma_2-m})$
are weakly-I $\SG$-ordered with respect to
$(r,\rho ,\omega _0,\varphi ,\Omega )$, when
$$
\omega _0(x,\xi) = \eabs x^{m}\eabs \xi ^{\mu}\quad
\text{and}\quad \Omega = B_r(0).
$$

\par

\noindent
Furthermore, if $p$, $m$, $\mu$
and $\omega _j$, $j=0,1,2$ are the same as in Theorem
\ref{FIOmodcont}, then it follows that
$(M^p_{(\omega _1)},M^p_{(\omega _2)})$ are weakly-I
$\SG$-ordered with respect to $(r,\rho ,\omega _0,\varphi ,\emptyset )$.
\end{rem}

\par

\section{Propagation results for global Wave-front Sets and
generalised FIOs of $\SG$ type}\label{sec4}

\par

%
%
We first recall the definition of global wave-front sets with respect
to modulation spaces, given in \cite{CJT2}. The content of
Subsection \ref{subs:4.1} again comes from \cite{CJT3}.
In Subsection \ref{subs:4.2} we prove our main results about the propagation
of singularities in the $\SG$ context, under the action of the Fourier integral
operators described above. 

\par

\subsection{Global Wave-front Sets}\label{subs:4.1}
Here we recall the definition given in \cite{CJT2} of global
wave-front sets for temperate distributions with respect to Banach
or Fr\'echet spaces and state some of their properties (see also
\cite{CJT3}). First of all, we recall the definitions of set of
characteristic points that we use in this setting.
Remember that if $a\in \SG^{(\omega _0 )}_{r,\rho}(\rr {2d})$, then
$$
|a(x,\xi )|\lesssim \omega _0 (x,\xi ).
$$
On the other hand, $a$ is invertible, in the sense that $1/a$ is a symbol in
$\SG^{(1/\omega _0 )}_{r,\rho}(\rr {2d})$, if and only if
\begin{equation}\label{invcond}
\omega _0 (x,\xi )\lesssim |a(x,\xi )|.
\end{equation}
We need to deal with the situations where \eqref{invcond} holds only in certain
(conic-shaped) subset of $\rr d \times \rr d$. Here we let $\Omega _m$,
$m=1,2,3$, be the sets
\begin{equation}\label{omegasets}
\begin{aligned}
\Omega _1= \rr d\times (&\rr d\setminus 0),\qquad
\Omega _2 = (\rr d \setminus 0)\times \rr d,
\\[1ex]
\Omega _3 &= (\rr d \setminus 0)\times (\rr d \setminus 0),
\end{aligned}
\end{equation}

\par

\begin{defn}\label{defchar}
Let $r,\rho \ge 0$, $\omega _0\in \mathscr P _{r,\rho} (\rr
{2d})$, $\Omega _m$, $m=1,2,3$ be as in \eqref{omegasets},
and let $a\in \SG ^{(\omega _0)}_{r,\rho}(\rr {2d})$.

\medspace

\begin{enumerate}

\item $a$ is called \emph{locally} or \emph{type-$1$ invertible} 
with respect to $\omega _0$ at the
point $(x_0,\xi_0)\in \Omega _1$, if
there exist a neighbourhood $X$ of $x_0$, an open conical
neighbourhood $\Gamma$ of $\xi _0$  and a positive constant $R$
such that \eqref{invcond} holds for $x\in X$, $\xi\in \Gamma$ and
$|\xi|\ge R$.

\vrum

\item  $a$ is called
\emph{Fourier-locally} or \emph {type-$2$ invertible} with respect to
$\omega _0$ at the point $(x_0,\xi_0)\in \Omega _2$, if
there exist an open conical neighbourhood $\Gamma$  of $x_0$, a
neighbourhood $X$ of $\xi _0$ and a positive constant $R$ such
that \eqref{invcond} holds for $x\in \Gamma$, $|x|\ge R$ and  $\xi\in
X$.

\vrum

\item  $a$ is called
\emph{oscillating} or \emph{type-$3$ invertible} with respect to
$\omega _0$ at the point $(x_0,\xi_0)\in \Omega _3$, if
there exist open conical neighbourhoods $\Gamma _1$  of $x_0$ and
$\Gamma _2$ of $\xi _0$, and a positive constant $R$
such that \eqref{invcond} holds for $x\in \Gamma _1$, $|x|\ge R$,
$\xi \in \Gamma _2$ and $|\xi |\ge R$.

\end{enumerate}

\par

If $m\in \{ 1,2,3\}$ and  $a$ is \emph{not} type-$m$ invertible
with respect to $\omega _0$ at $(x_0,\xi_0)\in \Omega _m$,
then $(x_0,\xi_0)$ is called \emph{type-$m$ characteristic} for $a$ with
respect to $\omega _0$. The set of type-$m$
characteristic points for $a$ with respect to $\omega _0$ is denoted by
$\Char _{(\omega _0)}^m(a)$.

\par

The \emph{(global) set of characteristic points} (the characteristic set), for a
symbol $a\in \SG^{(\omega _0)}_{r,\rho}(\rr {2d})$ with respect to $
\omega _0$ is defined as
$$
\Char (a)=\Char _{(\omega _0)}(a)=\Char ^1_{(\omega
_0)}(a)\bigcup\Char ^2 _{(\omega _0)}(a)\bigcup\Char ^3_{(\omega
_0)}(a).
$$
\end{defn}

\par

\begin{rem}\label{psiinvremark}
Let $X\subseteq \rr d$ be open and $\Gamma ,\Gamma _1,\Gamma _2
\subseteq \rr d\back 0$
be open cones. Then the following is true.
\begin{enumerate}
\item if $x_0\in X$, $\xi _0\in \Gamma$, $\fy \in \mathscr
C _{x _0}(X)$ and $\psi \in \mathscr C ^{\dir} _{\xi _0}(\Gamma )$,
then $c_1=\fy \otimes \psi$ belongs to $\SG^{0,0}_{1,1}(\rr {2d})$,
and is type-$1$ invertible at $(x_0,\xi _0)$;

\vrum

\item if $x_0\in \Gamma$, $\xi _0\in X$, $\psi \in \mathscr C ^{\dir}
_{x_0}(\Gamma )$ and $\fy \in \mathscr C _{\xi  _0}(X)$,
then $c_2=\fy \otimes \psi$ belongs to $\SG^{0,0}_{1,1}(\rr {2d})$,
and is type-$2$ invertible at $(x_0,\xi _0)$;

\vrum

\item if $x_0\in \Gamma _1$, $\xi _0\in \Gamma _2$, $\psi _1\in
\mathscr C ^{\dir} _{x_0}(\Gamma _1)$ and $\psi _2\in
\mathscr C ^{\dir} _{\xi _0}(\Gamma _2)$, then $c_3=\psi _1 \otimes
\psi _2$ belongs to $\SG^{0,0}_{1,1}(\rr {2d})$, and is type-$3$
invertible at $(x_0,\xi _0)$.
\end{enumerate}
\end{rem}

\par

\begin{rem}
In the case $\omega _0=1$ we exclude the phrase ``with respect to
$\omega _0$" in Definition \ref{defchar}. For example, $a\in
\SG^{0,0}_{r,\rho}(\rr{2d})$ is \emph{type-$1$ invertible} at
$(x_0,\xi_0) \in \rr d\times (\rr d\back 0)$ if $(x_0,\xi _0)\notin
\Char ^1 _{(\omega _0)}(a)$ with $\omega _0=1$. This means that there
exist a neighbourhood $X$ of $x_0$, an open conical
neighbourhood $\Gamma$ of $\xi_0$ and $R>0$ such that
\eqref{invcond} holds for $\omega _0=1$, $x\in X$ and $\xi\in \Gamma$
satisfies $|\xi|\geq R$.
\end{rem}
%
%
%
%

\par

In the next definition we introduce different classes of cutoff
functions (see also Definition 1.9 in \cite{CJT1}).

\par

\begin{defn}\label{cuttdef}
Let $X\subseteq \rr d$ be open, $\Gamma \subseteq \rr
d\setminus 0$ be an open cone, $x_0\in X$ and let
$\xi _0\in \Gamma$.

\begin{enumerate}
\item A smooth function $\fy$ on $\rr d$ is called a \emph{cutoff
(function)} with respect to $x_0$ and $X$, if $0\le \fy \le 1$, $\fy \in
C_0^\infty (X)$ and $\fy =1$ in an open neighbourhood of $x_0$. The
set of cutoffs with respect to $x_0$ and $X$ is denoted by
$\mathscr C_{x_0}(X)$ or $\mathscr C_{x_0}$.

\vrum

\item  A smooth function $\psi$ on $\rr d$ is called a
\emph{directional cutoff (function)} with respect to $\xi_0$ and
$\Gamma$, if there is a constant $R>0$ and open conical neighbourhood
$\Gamma _1\subseteq \Gamma$ of $\xi _0$ such that the following is
true:
\begin{itemize}
\item $0\le \psi \le 1$ and $\supp \psi \subseteq \Gamma$;

\vrum

\item  $\psi (t\xi )=\psi (\xi )$ when $t\ge 1$ and $|\xi |\ge R$;

\vrum

\item $\psi (\xi )=1$ when $\xi \in \Gamma _1$ and $|\xi |\ge R$.
\end{itemize}

\par

The set of directional cutoffs with respect to $\xi _0$ and
$\Gamma$ is denoted by $\mathscr C^\dir  _{\xi _0}(\Gamma )$ or
$\mathscr C^\dir  _{\xi _0}$.
\end{enumerate}
\end{defn}

\par

The next proposition shows that $\op _t(a)$ for
$t\in \mathbf R$ satisfies
convenient invertibility properties of the form
\begin{equation}\label{locinvop}
\op _t(a)\op _t(b) = \op _t(c) + \op _t(h),
\end{equation}
outside the set of characteristic
points for a symbol $a$. Here $\op _t(b)$, $\op _t(c)$ and $\op _t(h)$ have
the roles of ``local inverse'', ``local identity'' and smoothing operators
respectively. From these statements it also follows that our set of
characteristic points in Definition \ref{defchar} are related to those
in \cite{CoMa,Ho1}. 
We let $\mathbb I_m$,
$m=1,2,3$, be the sets
\begin{equation}\label{isets}
\mathbb I_1  \equiv [0,1]\times (0,1],\,
\mathbb I_2 \equiv (0,1]\times [0,1],\,
\mathbb I_3 \equiv  (0,1]\times (0,1] =
\mathbb I_1\cap \mathbb I_2.
\end{equation}
which will be useful in the sequel.

\par

\begin{prop}\label{charequiv}
Let $m\in \{1,2,3\}$, $(r,\rho )\in \mathbb I_m$, $\omega _0\in
\mathscr P_{r,\rho}(\rr {2d})$ and let $a\in \SG ^{(\omega _0)}_{r,\rho }(\rr
{2d})$. Also let $\Omega _m$ be as in \eqref{omegasets}, $(x_0,\xi _0)\in
\Omega _m$, and let $(r_0,\rho _0)$ be equal to $(r,0)$, $ (0,\rho ) $
and $(r,\rho )$ when $m$ is equal to $1$, $2$ and $3$, respectively.
Then the following conditions are equivalent:

\medspace

\begin{enumerate}

\item $(x_0,\xi _0)\notin \Char _{(\omega _0)}^m (a)$;

\vrum

\item there is an element $c\in \SG^{0,0}_{r,\rho}$ which is
type-$m$ invertible at $(x_0,\xi _0)$, and an element $b\in
\SG^{(1/\omega _0)}_{r,\rho}$ such that $ab=c$;

\vrum

\item \eqref{locinvop} holds for some $c\in \SG^{0,0}_{r,\rho}$ which is
type-$m$ invertible at $(x_0,\xi _0)$, and some elements  $h\in
\SG^{-r_0,-\rho _0}_{r,\rho}$ and $b\in \SG^{(1/\omega _0)}_{r,\rho}$;

\vrum

\item \eqref{locinvop} holds for some $c_m\in \SG^{0,0}_{r,\rho}$
in Remark \ref{psiinvremark} which is
type-$m$ invertible at $(x_0,\xi _0)$, and some elements  $h$ and $b\in
\SG^{(1/\omega _0)}_{r,\rho}$, where $h\in \mathscr S$ when $m\in
\{ 1,3\}$ and $h\in \SG ^{-\infty ,0}$ when $m=2$.

\par

Furthermore, if $t=0$, then the supports of $b$ and $h$ can be chosen to be
contained in $X\times \rr d$ when $m=1$, in $\Gamma \times \rr d$ when
$m=2$, and in $\Gamma _1\times \rr d$ when $m=3$.
\end{enumerate}
\end{prop}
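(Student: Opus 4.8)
The plan is to prove the chain of equivalences cyclically, $(1)\Rightarrow(2)\Rightarrow(3)\Rightarrow(4)\Rightarrow(1)$, since $(4)$ is a special case of $(3)$ and $(3)\Rightarrow(1)$ closes the loop; the only substantial work is in $(1)\Rightarrow(2)$ and $(2)\Rightarrow(4)$. Throughout I would work separately according to $m\in\{1,2,3\}$, using the matching index set $(r,\rho)\in\mathbb I_m$ from \eqref{isets} and the reduced orders $(r_0,\rho_0)$, because the cutoff constructions and the admissible remainder classes differ in the three cases (Schwartz when $m\in\{1,3\}$, only $\SG^{-\infty,0}$ when $m=2$, reflecting that type-$2$ invertibility controls decay only in $x$).

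First I would establish $(1)\Rightarrow(2)$. Assume $(x_0,\xi_0)\notin\Char^m_{(\omega_0)}(a)$, so \eqref{invcond} holds on a set of the appropriate conic shape: $X\times(\Gamma\cap\{|\xi|\ge R\})$ for $m=1$, $(\Gamma\cap\{|x|\ge R\})\times X$ for $m=2$, and $(\Gamma_1\cap\{|x|\ge R\})\times(\Gamma_2\cap\{|\xi|\ge R\})$ for $m=3$. Pick the cutoffs from Remark \ref{psiinvremark} and Definition \ref{cuttdef}: $\fy\in\mathscr C_{x_0}(X)$, $\psi\in\mathscr C^{\dir}_{\xi_0}(\Gamma)$, etc., with supports small enough that \eqref{invcond} holds on a neighbourhood of $\supp c_m$. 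Define $c=c_m$ (the tensor product of cutoffs, which lies in $\SG^{0,0}_{r,\rho}$ and is type-$m$ invertible at $(x_0,\xi_0)$ by Remark \ref{psiinvremark}), and set $b = c_m/a$ on the set where $a$ is invertible, extended by $0$ (or more carefully, $b = c_m\cdot \widetilde a$, where $\widetilde a$ is a symbol in $\SG^{(1/\omega_0)}_{r,\rho}$ agreeing with $1/a$ on a neighbourhood of $\supp c_m$; such $\widetilde a$ exists because \eqref{invcond} on an open conic neighbourhood of $\supp c_m$ makes $1/a$ a symbol there, which can be cut off to the whole space). Then $ab = c_m\cdot a\widetilde a = c_m = c$ on $\supp c_m$ and both sides vanish off it, so $ab=c$; the Leibniz rule together with $\widetilde a\in\SG^{(1/\omega_0)}_{r,\rho}$ and $\omega_0\in\mathscr P_{r,\rho}$ gives $b\in\SG^{(1/\omega_0)}_{r,\rho}$. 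This yields $(2)$, and the support localization ($\supp b\subseteq X\times\rr d$ for $m=1$, etc.) is built in from the choice of cutoffs.

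Next, $(2)\Rightarrow(4)$ (and $(2)\Rightarrow(3)$ along the way). Given $ab=c$ as in $(2)$, apply the symbolic calculus for $\op_t$: the composition $\op_t(a)\op_t(b)=\op_t(a\# b)$ where $a\# b$ has an asymptotic expansion with leading term $ab=c$ and lower-order terms in $\SG^{(\omega_0\cdot(1/\omega_0)\vartheta_{-r_0,-\rho_0})}_{r,\rho}=\SG^{-r_0,-\rho_0}_{r,\rho}$. So $\op_t(a)\op_t(b)=\op_t(c)+\op_t(h_1)$ with $h_1\in\SG^{-r_0,-\rho_0}_{r,\rho}$; this is exactly $(3)$. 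To upgrade $h_1$ to the sharper remainder in $(4)$, note that the full expansion of $a\# b$ minus the single term $c$ lies, for $m\in\{1,3\}$, in $\bigcap_N\SG^{-N,-N}_{r,\rho}=\mathscr S$ when both $r,\rho>0$, and for $m=2$ (where $r$ may be $0$) only the $\xi$-decay improves indefinitely, landing $h$ in $\SG^{-\infty,0}$; here I would invoke Proposition \ref{propasymp} / Definition \ref{def:gensgasexp} to make sense of the asymptotic sum and identify the residual class. Since in case $(2)$ we already took $c=c_m$ to be one of the tensor-product symbols of Remark \ref{psiinvremark}, the resulting identity \eqref{locinvop} is precisely of the form demanded in $(4)$, with the stated support properties for $t=0$ inherited from $(2)$.

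Finally, $(3)\Rightarrow(1)$ (hence $(4)\Rightarrow(1)$). Suppose \eqref{locinvop} holds with $c$ type-$m$ invertible at $(x_0,\xi_0)$, $h\in\SG^{-r_0,-\rho_0}_{r,\rho}$, $b\in\SG^{(1/\omega_0)}_{r,\rho}$. Passing to symbols, $a\# b = c + h$. On a suitable conic neighbourhood of $(x_0,\xi_0)$ of type-$m$ shape, $|c|\gtrsim 1$ while $|h|\lesssim\eabs x^{-r_0}\eabs\xi^{-\rho_0}\to 0$ in the relevant directions (in $x$ for $m=1$ only if $r_0>0$—but for $m=1$ one has $r_0=r$ and invertibility is asked only for large $|\xi|$, so the decay of $h$ in $\xi$ is $\rho_0=0$; hmm, here one uses instead that $c$ is bounded below uniformly and $h\to0$ as $|\xi|\to\infty$ along $\Gamma$, which holds since $h\in\SG^{-r,0}_{r,\rho}$ is bounded and the leading behaviour of $a\# b$ off the diagonal is governed by $c$). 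More robustly: $|a(x,\xi)||b(x,\xi)|\gtrsim |c(x,\xi)| - |h(x,\xi)| - (\text{lower order from }a\# b)\gtrsim 1$ on the appropriate shrunken conic set with $|x|$ and/or $|\xi|$ large, whence $|a(x,\xi)|\gtrsim 1/|b(x,\xi)|\gtrsim\omega_0(x,\xi)$ there (using $|b|\lesssim 1/\omega_0$). That is \eqref{invcond} locally, i.e.\ $(x_0,\xi_0)\notin\Char^m_{(\omega_0)}(a)$.

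\textbf{Main obstacle.} The delicate point is the direction $(2)\Rightarrow(4)$: controlling the residual term $h$ in the composition $a\# b$ well enough to land it in $\mathscr S$ (for $m\in\{1,3\}$) or in $\SG^{-\infty,0}$ (for $m=2$), rather than merely in $\SG^{-r_0,-\rho_0}_{r,\rho}$. This requires iterating the local-inversion step—replacing $b$ by a corrected parametrix $b + b_1 + b_2 + \cdots$—and then using the asymptotic-summation machinery of Proposition \ref{propasymp} to produce an actual symbol whose composition with $a$ equals $c_m$ up to the sharp remainder; keeping the support localizations (for $t=0$) stable under this iteration is the bookkeeping one must be careful with. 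The case distinction in $m$ also requires care, since when $r=0$ (allowed for $m=2$) there is no $x$-smoothing available and one cannot expect better than $\SG^{-\infty,0}$.
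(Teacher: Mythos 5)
Your overall plan (construct $b$ as a cut-off quotient $c_m/a$ for $(1)\Rightarrow(2)$, compose and iterate a parametrix with asymptotic summation for $(2)\Rightarrow(3),(4)$, close the cycle by a pointwise lower bound for $(3)\Rightarrow(1)$) is the standard route; note that the present paper states Proposition \ref{charequiv} without proof, recalling it from \cite{CJT1,CJT2}, so the comparison is with that argument. However, your execution confuses which parameter may degenerate for which $m$, and this hits exactly the two delicate steps. Since $\mathbb I_1=[0,1]\times(0,1]$, the case $r=0$ is allowed for $m=1$, so your claim that $h\in\mathscr S$ for $m\in\{1,3\}$ ``when both $r,\rho>0$'' does not cover $m=1$: there the Schwartz remainder must come from the bounded $x$-localization (compact $x$-support of $c_1$, of $b$ and of all parametrix corrections) combined with the $\xi$-gains from $\rho>0$, not from two-sided symbol gains. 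Symmetrically, for $m=2$ it is $\rho$, not $r$, that may vanish, and the indefinitely improving decay is in $x$ (because $r>0$), which is what places $h$ in $\SG^{-\infty,0}$; your sentence asserts the opposite. Moreover, the support claim for $t=0$ is not simply ``inherited from $(2)$'': for the Kohn--Nirenberg quantization the exact composed symbol inherits its $x$-support from the \emph{left} factor and its $\xi$-support from the right factor, so with $a$ on the left nothing forces $\supp h\subseteq X\times\rr d$; this must be argued through the kernel/support structure of $\op_0$, and it is precisely this support control (not the calculus alone) that also produces the sharp remainder classes in $(4)$.

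The implication $(3)\Rightarrow(1)$ is genuinely broken as you wrote it. For $m=1$ you assert that $h\to0$ as $|\xi|\to\infty$ along $\Gamma$ ``since $h\in\SG^{-r,0}_{r,\rho}$ is bounded'': boundedness gives no decay, and on $X\times\Gamma$ with $X$ bounded the factor $\eabs x^{-r}$ is of order one, so membership in $\SG^{-r,0}_{r,\rho}$ yields no smallness at all. In fact, with the exponents exactly as printed ($(r_0,\rho_0)=(r,0)$ for $m=1$), $(3)$ does not imply $(1)$: take $a\equiv0$, $b=0$, $c=c_1=\fy\otimes\psi$ and $h=-c_1$; since $c_1$ has compact $x$-support it belongs to $\SG^{-r,0}_{r,\rho}$, so $(3)$ holds although every point is type-$1$ characteristic for $a\equiv 0$. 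The only reading consistent with $(4)$ and with the equivalence (note $\SG^{-\infty,0}\not\subseteq\SG^{0,-\rho}$ when $\rho>0$, so even $(4)\Rightarrow(3)$ fails with the printed exponents) is that the assignments for $m=1$ and $m=2$ are interchanged, i.e.\ $h\in\SG^{0,-\rho}_{r,\rho}$ for $m=1$ and $h\in\SG^{-r,0}_{r,\rho}$ for $m=2$; with these exponents your estimate $|ab|\ge|c|-|h|-|a\#b-ab|\gtrsim1$ on the type-$m$ region does work, because $\rho>0$ ($m=1$), $r>0$ ($m=2$) or both ($m=3$) supply the required decay, and then $|a|\gtrsim1/|b|\gtrsim\omega_0$ follows. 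As it stands you neither supply the needed decay nor notice the inconsistency, so the closing step of your cycle, and with it the stated equivalence, is not established.
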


\par

We can now introduce the complements of the wave-front sets.
More precisely, let $\Omega _m$, $m\in \{ 1,2,3\}$, be given by
\eqref{omegasets}, $\cB$ be a Banach or Fr{\'e}chet space such
that $\mathscr S(\rr d)\subseteq \cB \subseteq \mathscr S'(\rr d)$,
and let $f\in \mathscr S'(\rr d)$. Then the point $(x_0,\xi _0)\in
\Omega _m$ is called \emph{type-$m$} regular for $f$ with
respect to $\cB$, if
\begin{equation}\label{ImageOpcm}
\op (c_m)f\in \cB ,
\end{equation}
for some $c_m$ in Remark \ref{psiinvremark}. The set of all type-$m$
regular points for $f$ with respect to $\cB$, is denoted by
$\Theta ^m_{\cB}(f)$.

\par

\begin{defn}\label{def:wfsMB}
Let $m\in \{ 1,2,3\}$, $\Omega _m$ be as in \eqref{omegasets}, and
let $\cB$ be a Banach or Fr\'echet space such that
$\mathscr S(\rr d)\subseteq \cB \subset \mathscr S^\prime(\rr d)$.
\begin{enumerate}
\item the \emph{type-$m$ wave-front set} of $f\in \mathscr S'(\rr d)$ with
respect to $\cB$ is the complement of $\Theta ^m_{\cB}(f)$ in $\Omega
_m$, and is denoted by $\WFF ^m_{\cB}(f)$;

\vrum

\item the \emph{global wave-front set} $\WFgB(f)\subseteq (\rr
d\times \rr d)\back 0$ is the set
\begin{equation*}
\WFgB(f) \equiv \WFF ^1_\cB (f) \bigcup \WFF ^2_\cB (f) \bigcup \WFF ^3_\cB (f).
\end{equation*}
\end{enumerate}
\end{defn}

\par

The sets $\WFF ^1_{\cB}(f)$, $\WFF ^2_{\cB}(f)$ and $\WFF ^3_{\cB}(f)$
in Definition \ref{def:wfsMB}, are also called the \emph{local},
\emph{Fourier-local} and \emph{oscillating} wave-front set of $f$ with
respect to $\cB$.

\par

\begin{rem}\label{rem:wfcon}
        Let $\Omega _m$, $m=1,2,3$ be the same as in \eqref{omegasets}. 
	\begin{enumerate}
		\item If $\Omega \subseteq \Omega _1$, and
		$(x_0,\xi_0)\in \Omega \ \Longleftrightarrow \ 
			(x_0,\sigma \xi_0)\in \Omega$ for $\sigma \ge 1$,
			then $\Omega$ is called \emph{$1$-conical};
		
		\vrum

		\item If $\Omega \subseteq \Omega _2$, and $(x_0,\xi_0)\in
		\Omega \ \Longleftrightarrow \ 
			(sx_0,\xi_0)\in\Theta^2_{\cB}(f)$ for $s \ge 1$,
			then $\Omega$ is called \emph{$2$-conical};
		
		\vrum

		\item If $\Omega \subseteq \Omega _3$, and $(x_0,\xi_0)
		\in \Omega\ \Longleftrightarrow \ 
			(sx_0,\sigma\xi_0)\in \Omega$ for $s,\sigma \ge 1$,
		then $\Omega$ is called \emph{$3$-conical}.
	\end{enumerate}
        By \eqref{ImageOpcm} and the paragraph before Definition \ref{def:wfsMB},
	it follows that if $m=1,2,3$, then $\Theta^m_\cB (f)$ is $m$-conical.
	The same holds for $\WFF^m_\cB(f)$, $m=1,2,3$, 
	by Definition \ref{def:wfsMB}, noticing that, for any $x_0\in
	\rr{r} \setminus \{0\}$, any open cone $\Gamma\ni x_0$,
	and any $s>0$, $\mathscr C ^{\dir} _{x_0}(\Gamma )
	=\mathscr C ^{\dir} _{s x_0}(\Gamma )$.
	For any $R>0$ and $m\in \{1,2,3\}$, we set
\begin{gather*}
\Omega_{1,R} \equiv \sets {(x,\xi )\in \Omega _1}{|\xi |\ge R},
\quad
\Omega_{2,R} \equiv \sets {(x,\xi )\in \Omega _2}{|x |\ge R},
\\[1ex]
\Omega_{3,R} \equiv \sets {(x,\xi )\in \Omega _3}{|x|, |\xi |\ge R}
\end{gather*}
	Evidently, $\Omega_m^R$ is $m$-conical for every $m\in \{ 1,2,3\}$.
\end{rem}

\par

From now on we assume that $\cB$ in Definition \ref{def:wfsMB} is
$\SG$-admissible, and recall
that Sobolev-Kato spaces and, more generally, modulation spaces, and
$\mathscr S(\rr d)$ are $\SG$-admissible, see \cite{CJT2, CJT3}.

\par

The next result describes the relation between ``regularity with respect to
$\cB$\,'' of temperate distributions and global wave-front sets.

\par

\begin{prop}\label{mainthm4}
Let $\cB$ be $\SG$-admissible, and let $f\in\cS^\prime (\rr d)$. Then
\begin{equation*}
f\in\cB \quad \Longleftrightarrow \quad
\WFgB(f)=\emptyset.
\end{equation*}
\end{prop}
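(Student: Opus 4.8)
The plan is to prove the two implications separately. The direction $f\in\cB\Rightarrow\WFgB(f)=\emptyset$ is the easy one: if $f\in\cB$, then for any $m\in\{1,2,3\}$ and any $c_m$ of the form described in Remark \ref{psiinvremark} we have $c_m\in\SG^{0,0}_{1,1}(\rr{2d})$, and since $\cB$ is $\SG$-admissible the operator $\op(c_m)$ maps $\cB$ continuously into itself; hence $\op(c_m)f\in\cB$, so every $(x_0,\xi_0)\in\Omega_m$ is type-$m$ regular and $\WFF^m_\cB(f)=\emptyset$ for each $m$. Taking the union gives $\WFgB(f)=\emptyset$.

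The substance is the converse. Assume $\WFgB(f)=\emptyset$, i.e.\ $\WFF^m_\cB(f)=\emptyset$ for $m=1,2,3$. First I would handle the local part ($m=1$): for each point $(x_0,\xi_0)\in\Omega_1$ there is, by hypothesis, a symbol $c_{1}=\fy\otimes\psi$ as in Remark \ref{psiinvremark}(1), type-$1$ invertible at $(x_0,\xi_0)$, with $\op(c_1)f\in\cB$. Fixing a large $R>0$ and using compactness of the sphere $\mathbf S^{d-1}$ in the $\xi$-direction together with a standard covering argument, I would extract finitely many such $c_1^{(j)}$ whose regions of type-$1$ invertibility cover $\{|\xi|\ge R\}$ (uniformly in $x$, after also covering the relevant compact set in $x$ near where $\fy$ varies, exploiting that the cutoffs $\fy$ can be taken equal to $1$ on arbitrarily large balls). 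Summing the symbols produces a single $c\in\SG^{0,0}_{1,1}$ that is elliptic for $|\xi|\ge R$; by Proposition \ref{charequiv} (or directly by $\SG$ parametrix construction for this locally-elliptic-at-infinity symbol) there are $b,h$ with $\op(c)\op(b)=I+\op(h)$, $h\in\SG^{-r_0,-\rho_0}_{1,1}$ supported where $|\xi|\lesssim R$, hence $h\in\SG^{-\infty,0}$-type there, so $\op(h)$ maps $\cS'$ into a space controlled in the $\xi$-variable. Combining, $f=\op(b)\op(c)f-\op(h)f$ modulo smooth: the first term lies in $\cB$ because $\op(c)f\in\cB$ by the covering and $\op(b)$ preserves $\cB$ by $\SG$-admissibility; the residual $\op(h)f$ is smooth and rapidly decaying in the directions already controlled. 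Iterating this with the Fourier-local part ($m=2$) and the oscillating part ($m=3$) — using Remark \ref{psiinvremark}(2),(3) and the corresponding clauses of Proposition \ref{charequiv}, and the sets $\Omega_{m,R}$ — removes the remaining obstructions at spatial infinity and in the mixed regime, so that after finitely many reductions $f$ differs from an element of $\cB$ by a Schwartz function, whence $f\in\cB$ since $\mathscr S(\rr d)\subseteq\cB$.

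A cleaner way to organize the converse, which I would actually prefer, is to build a \emph{single} elliptic symbol globally. Since $\WFF^m_\cB(f)=\emptyset$ for all $m$, one can cover all of $(\rr d\times\rr d)\setminus 0$ by finitely many regions of the three types on which corresponding $c_m$'s with $\op(c_m)f\in\cB$ exist; adding a fixed symbol equal to a constant on a large compact neighborhood of the origin, the resulting sum $c$ is then $\SG$-\emph{elliptic} on all of $\rr{2d}$ (i.e.\ $1\lesssim|c(x,\xi)|$ everywhere). By the recalled facts at the end of Subsection \ref{subs:1.3}, an elliptic $\SG^{0,0}_{r,\rho}$ operator is invertible modulo $\op(\mathscr S)$, with inverse again in $\op(\SG^{0,0}_{r,\rho})$; call it $\op(b)$. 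Then $\op(b)\op(c)=I+\op(s)$ with $s\in\mathscr S$, so $f=\op(b)\,(\op(c)f)-\op(s)f$. By linearity and the covering construction $\op(c)f\in\cB$; by $\SG$-admissibility $\op(b)$ maps $\cB$ into $\cB$; and $\op(s)f\in\mathscr S(\rr d)\subseteq\cB$. Hence $f\in\cB$.

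The main obstacle is the covering/patching step: one must ensure that finitely many local, Fourier-local and oscillating invertibility regions, each attached to a cutoff of the specific product form $\fy\otimes\psi$, genuinely exhaust $(\rr d\times\rr d)\setminus 0$ and that the summed symbol is \emph{uniformly} elliptic — not merely nonvanishing — with all $\SG^{0,0}$ seminorms under control. This requires a careful compactness argument: compactness of the directional spheres for the conical ($\xi$- or $x$-) variables, together with the freedom to take the plain cutoffs $\fy$ equal to $1$ on arbitrarily large balls, so that outside a large compact set in $(x,\xi)$ one is always in one of the three conical regimes. Making the bookkeeping of the radii $R$ and the order reductions consistent across the three types — so that the leftover term is truly Schwartz rather than merely of some negative order in one variable only — is the delicate point; everything else is routine use of the $\SG$ symbolic calculus and the $\SG$-admissibility of $\cB$.
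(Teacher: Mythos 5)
Your proposal is correct and, in its preferred second organization, follows essentially the same route as the argument the paper recalls from \cite{CJT2}: the forward direction is immediate from $\SG$-admissibility, and the converse is obtained by a compactness/covering argument in the three conical regimes, summing the cutoff symbols $c_m$ (plus a compactly supported bump near the origin) into a globally $\SG$-elliptic symbol $c\in\SG^{0,0}_{1,1}$ with $\op (c)f\in \cB$, and then invoking an $\SG$ parametrix with Schwartz remainder together with $\mathscr S(\rr d)\subseteq \cB$. The only questionable aside is the claim in your first sketch that the cutoffs $\fy$ ``can be taken equal to $1$ on arbitrarily large balls'' (a given $c_1$ with $\op (c_1)f\in \cB$ cannot simply be enlarged while retaining this membership), but your second argument replaces this by a finite covering of the relevant compact sets, so nothing essential is affected.
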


\par

For the sake of completeness, we recall that microlocality and microellipticity
hold for these global wave-front sets and pseudo-differential operators in
$\op(\SG^{(\omega_0)}_{r,\rho})$, see \cite{CJT2}.
This implies that 
operators which are elliptic with respect to $\omega _0\in \mathscr
P_{\rho,\delta}(\rr {2d})$ when $0< r ,\rho \le 1$ preserve the global wave-front set
of temperate distributions. 
The next result is an immediate corollary
of microlocality and microellipticity
for operators in $\op(\SG^{(\omega_0)}_{r,\rho})$:

\par

\begin{prop}\label{hypoellthm}
Let $m\in \{ 1,2,3\}$, $(r,\rho )\in \mathbb I_m$, $t\in \mathbf R$, 
$\omega
_0\in  \mathscr P_{r,\rho}(\rr {2d})$, $a\in \SG^{(\omega _0)}_{r,\rho} 
(\rr {2d})$ be $\SG$-elliptic
with respect to $\omega _0$
and let $f\in \cS '(\rr d)$. Moreover, let $(\cB , \cC )$ be a 
$\SG$-ordered pair with respect to $\omega _0$.
Then
\begin{align*}
\WFF ^{m} _{\mathcal C} (\op _t(a)f) &=
\WFF ^{m}_{\mathcal B} (f).
\end{align*}
\end{prop}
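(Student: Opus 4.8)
The plan is to prove the two inclusions separately, since equality of the wave-front sets follows from them. First I would establish the inclusion $\WFF^m_{\mathcal C}(\op_t(a)f)\subseteq\WFF^m_{\mathcal B}(f)$, which is the microlocality statement: if $(x_0,\xi_0)\notin\WFF^m_{\mathcal B}(f)$, then by definition $\op(c_m)f\in\mathcal B$ for some type-$m$ invertible $c_m$ as in Remark \ref{psiinvremark}. I would then pick a second such cutoff symbol $\tilde c_m$, type-$m$ invertible at $(x_0,\xi_0)$ but with support where $c_m$ equals $1$, so that (by Proposition \ref{charequiv}, applied to $c_m$ which is elliptic at $(x_0,\xi_0)$) one can write $\op(\tilde c_m)=\op(e)\op(c_m)\mod\op(\mathscr S)$ for a suitable $e\in\SG^{0,0}_{r,\rho}$. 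The key composition step is then $\op(\tilde c_m)\op_t(a)=\op_t(a)\op(d_m)\mod\op(\text{smoothing})$ for some $d_m\in\SG^{0,0}_{r,\rho}$ still type-$m$ invertible at $(x_0,\xi_0)$; this is the standard symbol-calculus commutation, using that $\op_t(a)$ with $a$ $\SG$-elliptic has a parametrix and that the principal symbols of cutoffs Poisson-commute to leading order in the $\SG$ calculus. Combining, $\op(\tilde c_m)\op_t(a)f=\op_t(a)\op(d_m)f\mod(\text{smoothing applied to }f)$, and since $\op(d_m)f\in\mathcal B$ (as $d_m$ is supported near the regular direction, absorbed by $\op(c_m)f\in\mathcal B$ and admissibility) and $\op_t(a):\mathcal B\to\mathcal C$ by the $\SG$-ordered hypothesis, we get $\op(\tilde c_m)\op_t(a)f\in\mathcal C$, i.e. $(x_0,\xi_0)\notin\WFF^m_{\mathcal C}(\op_t(a)f)$.

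For the reverse inclusion $\WFF^m_{\mathcal B}(f)\subseteq\WFF^m_{\mathcal C}(\op_t(a)f)$, I would use microellipticity: the $\SG$-ellipticity of $a$ with respect to $\omega_0$ means $a$ has empty characteristic set, so by Theorem/Proposition on parametrices there is $b\in\SG^{(1/\omega_0)}_{r,\rho}$ with $\op_t(b)\op_t(a)=I+\op_t(h)$ where $\op_t(h)$ is smoothing (or at least maps $\mathcal C\to\mathcal B$ harmlessly). Then if $(x_0,\xi_0)\notin\WFF^m_{\mathcal C}(\op_t(a)f)$, one applies the already-proved microlocality direction to the operator $\op_t(b)$ and the distribution $g=\op_t(a)f$, with the roles of $(\mathcal B,\mathcal C)$ swapped — here one needs $(\mathcal C,\mathcal B)$ to be $\SG$-ordered with respect to $1/\omega_0$, which is exactly part of the definition of $\SG$-ordered pair. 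This gives $(x_0,\xi_0)\notin\WFF^m_{\mathcal B}(\op_t(b)g)=\WFF^m_{\mathcal B}(\op_t(b)\op_t(a)f)=\WFF^m_{\mathcal B}(f+\op_t(h)f)$, and since $\op_t(h)f$ contributes nothing to the wave-front set (it lands in $\mathcal B$, or is smoothing), we conclude $(x_0,\xi_0)\notin\WFF^m_{\mathcal B}(f)$.

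I expect the main obstacle to be the bookkeeping in the commutation step $\op(\tilde c_m)\op_t(a)=\op_t(a)\op(d_m)\mod(\text{error})$ and, crucially, controlling the precise nature of the error term so that it is harmless when applied to $f$ — in the $\SG$ setting one must track that the remainder lies in $\op(\SG^{-r_0,-\rho_0}_{r,\rho})$ (for the relevant $(r_0,\rho_0)$ attached to $m$ as in Proposition \ref{charequiv}) or in $\op(\mathscr S)$, and that such operators do not enlarge the type-$m$ wave-front set. For $m=2$ in particular the remainder is only in $\SG^{-\infty,0}$ rather than $\mathscr S$, so one has to verify that such operators still respect type-$2$ regularity; this is precisely where Proposition \ref{charequiv}(4) and the $m$-conical structure recalled in Remark \ref{rem:wfcon} do the work. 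The rest — density of $\mathscr S$, continuity of the involved maps, and the fact that $\SG$-admissibility lets one move the zeroth-order cutoffs across $\mathcal B$ and $\mathcal C$ freely — is routine given the results recalled in Sections \ref{sec2} and \ref{sec3}.
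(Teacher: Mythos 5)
Your proposal is correct and follows essentially the same route the paper relies on: the paper presents this proposition as an immediate corollary of the microlocality and microellipticity results for $\op (\SG^{(\omega_0)}_{r,\rho})$ cited from \cite{CJT2}, which is exactly the two-inclusion argument (microlocality for one direction, parametrix/ellipticity applied to $b\in\SG^{(1/\omega_0)}_{r,\rho}$ together with the symmetric half of the $\SG$-ordered definition for the reverse) that you reconstruct. The bookkeeping issues you flag — the weaker remainder classes when $r=0$ or $\rho=0$ and their compatibility with type-$m$ regularity — are precisely what Proposition \ref{charequiv} and the restriction $(r,\rho)\in\mathbb I_m$ are designed to handle, so there is no gap.
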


\par

\subsection{Action of generalised FIOs of $\SG$ type on
global Wave-front Sets}\label{subs:4.2}
We let $\phi$ be the canonical transformation
of $T^\star \rr{d}$ into itself generated by the phase function
$\varphi \in \Phr$. This means that $\phi = (\phi_1,\phi_2)$ is
the smooth function on $T^*\rr d$ into itself, defined by the
relations
\begin{equation}
(x,\xi )=\phi (y,\eta )
\quad \Longleftrightarrow \quad
\label{eq:trsympl}
  \begin{cases}
     y  = \varphi^\prime_{\xi} (x,\eta) =  \varphi^\prime_{\eta} (x,\eta),
   \\[1ex]
     \xi = \varphi^\prime_x (x, \eta),
  \end{cases}
\end{equation}
As we have seen in Subsection \ref{sec:2.5}, such transformations
appear in the Egorov's theorem, through which we prove
Theorems \ref{thm:3.22} and Corollaries \ref{cor:FIOWFSell} and \ref{cor:FIOWFSellter} 
below. This justifies the following definition
of admissibility of phase functions. Namely, the latter are required to
generate transformations of the type \eqref{eq:trsympl} which ``preserve
the shape'' of the different kinds of neighborhoods 
appearing in the Definition \ref{defchar} of set of characteristic points.
\begin{defn}\label{def:gadm}
Let $\varphi\in\Phr$ and let $\phi$ be the canonical transformation
\eqref{eq:trsympl}, generated by $\varphi$. Let $m\in \{1,2,3\}$ and
$\Omega _m$ be as in \eqref{omegasets}.
%
\begin{enumerate}
	\item $\varphi$ is called \emph{$1$-admissible} at $(y_0,\eta_0) \in
	\Omega _1$ if, for every $1$-cone $X\times \Gamma$ containing
	$\phi (y_0,\eta_0)$ and $r>0$, there is a $1$-cone $Y\times \Gamma _0$
	containing $(y_0,\eta _0)$ and $R>0$ such that
	         \begin{equation*}
	         \phi (y,\eta ) \in (X\times \Gamma)\bigcap \Omega _{1,r}
	         \quad \text{when}\quad
	         (y,\eta ) \in (Y\times \Gamma _0)\bigcap \Omega _{1,R}\text ;
	         \end{equation*}

\vrum

	\item $\varphi$ is called \emph{$2$-admissible} at $(y_0,\eta_0) \in
	\Omega _2$ if, for every $2$-cone $\Gamma \times X$ containing
	$\phi (y_0,\eta_0)$ and $r>0$, there is a $2$-cone $\Gamma _0\times Y$
	containing $(y_0,\eta _0)$ and $R>0$ such that
	         \begin{equation*}
	         \phi (y,\eta ) \in (\Gamma \times X)\bigcap \Omega _{2,r}
	         \quad \text{when}\quad
	         (y,\eta ) \in (\Gamma _0\times Y)\bigcap \Omega _{2,R}\text ;
	         \end{equation*}
	         
\vrum
	         
	\item $\varphi$ is called \emph{$3$-admissible} at $(y_0,\eta_0) \in
		\Omega _3$ if, for every $3$-cone $\Gamma _1\times \Gamma _2$
		containing $\phi (y_0,\eta_0)$ and $r>0$, there is a $3$-cone
		$\Gamma _{0,1}\times \Gamma _{0,2}$
		containing $(y_0,\eta _0)$ and $R>0$ such that
	         \begin{equation*}
	         \phi (y,\eta ) \in (\Gamma _1\times \Gamma _2)
	         \bigcap \Omega _{3,r}
	         \quad \text{when}\quad
	         (y,\eta ) \in (\Gamma _{0,1}\times \Gamma _{0,2})\bigcap
	         \Omega _{3,R}\text .
	         \end{equation*}
\end{enumerate} 

\par

Furthermore, $\varphi$ is called \emph{$m$-admissible} if it is $m$-admissible
at all points $(y,\eta )\in \Omega _m$, and 
$\varphi$ is called \emph{admissible} if it is $m$-admissible for all $m=1,2,3$.
\end{defn} 

\par

\begin{rem}\label{rem:invtransf}
Notice that the inverse transformation $\phi^{-1}$ is defined as in
\eqref{eq:trsympl}, by exchanging the role of $(x,\xi)$ and $(y,\eta)$.
If $\varphi$ is $m$-admissible, $m=1,2,3$, then
both $\phi$ and $\phi^{-1}$ satisfy the corresponding property in
Definition \ref{def:gadm}.
\end{rem}

\par

\begin{rem}\label{rem:transfchar}
Let $\varphi$ be $m$-admissible, $m=1,2,3$, and let $\omega _0\in
\mathscr P_{1,1}(\rr {2d})$ be invariant with respect to the canonical transformation
\eqref{eq:trsympl}.
For any $a\in \SG ^{(\omega _0)} _{r,\rho }(\rr{2d})$, setting $\widetilde{\omega}_0=
\omega_0\circ\phi$, we have
\[
	(y_0,\eta_0)\in\Char^m_{(\widetilde{\omega}_0)}(a\circ\phi) 
	\quad \Longleftrightarrow \quad 
	(x_0,\xi_0)=\phi(y_0,\eta_0)\in\Char^m_{(\omega_0)}(a).
\]
By Remark \ref{rem:invtransf}, similar properties hold with
$\phi^{-1}$ in place of $\phi$.
\end{rem}

\par

\begin{rem}\label{rem:transfcharbis}
Let $\fy _m$, $m=1,2,3$, be phase functions such that
\begin{itemize}
\item $\xi \mapsto \fy _1(x,\xi )$ is homogeneous of order $1$
for large $|\xi |$;

\vrum

\item $x \mapsto \fy _1(x,\xi )$ is homogeneous of order $1$
for large $|x|$;

\vrum

\item $x \mapsto \fy _1(x,\xi )$ and $\xi \mapsto \fy _1(x,\xi )$ are
homogeneous of order $1$ for large $|x|$ and $|\xi |$.
\end{itemize}

\par

Such phase functions are common in the literature.
An example of admissible phase functions, which is not necessarily
homogeneous, is given by the so called \emph{$\SG$-classical phase
functions}. Families of such objects, smoothly depending on  a
parameter $t\in[-T,T]$, $T>0$, are obtained by solving Cauchy
problems associated with classical $\SG$-hyperbolic systems
with diagonal principal part.

In fact, omitting the dependence on the \emph{time variable $t$},
an $\SG$-classical phase functions $\varphi$ admits expansions in
terms which are homogeneous with respect to
$x$, respectively $\xi$, satisfying suitable compatibility relations, see, e.g.,
\cite{CoMa, CoPa}. In particular, $\varphi$ admits a principal
symbol, given by a triple $(\varphi_1,\varphi_2,\varphi_3)$, that is,
it can be written as
\begin{equation}
	\label{eq:classsymp}
	\begin{aligned}
	\varphi (x,\xi )&=\chi (\xi )\,\varphi _1(x,\xi )+\chi (x)(\varphi _2(x,\xi )
	-\chi (\xi )\,\varphi _3(x,\xi ))
	\\
	&\mod \SG^{0,0}_{1,1}(\rr{2d}).
	\end{aligned}
\end{equation}
In \eqref{eq:classsymp}, $\chi$ is a $0$-excision function, while 
$\chi (\xi )\,\varphi _1(x,\xi )$, $\chi (x)\,\varphi_2(x,\xi )$, $\chi (\xi )\,
\chi (x)\,\varphi _3(x,\xi )\in \SG^{1,1}_{1,1}(\rr{2d})$, where
$\varphi_1$ is $1$-homogeneous with respect to the variable $\xi$,
$\varphi_2$ is $1$-homogeneous with respect to the variable $x$,
and $\varphi_3$ is $1$-homogeneous with respect to each one of
the variables $x,\xi$. Observe that then
\begin{equation}
	\label{eq:classphf}
	\begin{aligned}
		\varphi(x,\xi)&=\chi(\xi)\,\varphi_1(x,\xi)\mod\SG^{1,0}_{1,1}(\rr{2d}),
		\\[1ex]
		\varphi(x,\xi)&=\chi(x)\,\varphi_2(x,\xi)\mod\SG^{0,1}_{1,1}(\rr{2d}),
		\\[1ex]
		\varphi(x,\xi)&=\chi(x)\,\chi(\xi)\,\varphi_3(x,\xi)\mod
		\SG^{0,1}_{1,1}(\rr{2d})+\SG^{1,0}_{1,1}(\rr{2d}).
	\end{aligned}
\end{equation}
The homogeneity of the leading terms in \eqref{eq:classphf} implies, in particular,
\begin{equation}
	\label{eq:classphfbis}
	\begin{aligned}
		\varphi^\prime_x(x,\xi)&=
		|\xi|\left[\varphi^\prime_{1,x}\left(x,\dfrac{\xi}{|\xi|}\right
		)+|\xi|^{-1}r_1(x,\xi)\right]
		\text{ for $|\xi|>R$},
	\\[1ex]
		\varphi^\prime_\xi(x,\xi)&=
		|x|\left[\varphi^\prime_{2,\xi}\left(\dfrac{x}{|x|},\xi\right)+|x|^{-1}r_2(x,\xi)\right]
		\text{ for $|x|>R$},
	\\[1ex]
		\varphi^\prime_x(x,\xi)&=
		|\xi|\left[\varphi^\prime_{3,x}\left(\dfrac{x}{|x|},\dfrac{\xi}{|\xi|}\right)
		+
		           |\xi|^{-1}(r_{31}(x,\xi)+s_{31}(x,\xi))\right]
		           \\
		&\text{ for $|x|,|\xi|>R$},
	\\[1ex]
		\varphi^\prime_\xi(x,\xi)&=
		|x|\left[\varphi^\prime_{3,\xi}\left(\dfrac{x}{|x|},\dfrac{\xi}{|\xi|}\right)
		+
		          |x|^{-1}(r_{32}(x,\xi)+s_{32}(x,\xi))\right]
		          \\
		&\text{ for $|x|,|\xi|>R$},
	\end{aligned}
\end{equation}
with $r_1,r_2,r_{31},r_{32}\in\SG^{0,0}_{1,1}(\rr{2d})$, $s_{31}\in\SG^{-1,1}_{1,1}(\rr{2d})$,
$s_{32}\in\SG^{1,-1}_{1,1}(\rr{2d})$. By the properties of generalised $\SG$ symbols and \eqref{eq:classphfbis} it is possible to prove that all the conditions in Definition \ref{def:gadm} are fulfilled.
\end{rem}

\par

We can now state the first of our main results concerning the propagation of (global)
singularities under the action of the generalised $\SG$ FIOs.

\par

\begin{thm}
\label{thm:3.22}
Let $\varphi\in\Phr$ be $m$-admissible, $m\in\{1,2,3 \}$, 
$\omega_0\in \mathscr P_{r,\rho}(\rr {2d})$, 
$a\in\SG^{(\omega_0)}_{r,\rho}(\rr{2d})$, supported
outside $\rr{d}\times\Omega$,
$\Omega\subseteq\rr{d}$ open. Assume that $\omega_0$ is $\phi$-invariant, 
where $\phi$ is as in Theorem \ref{thm:3.8}.
Assume also that $a$ is $\SG$-elliptic
and $(\cB ,\cC )$ is a weakly-I $\SG$-ordered pair with respect to
$(r,\rho ,\omega _0,\varphi ,\Omega )$.
Then,
\begin{equation}
\label{eq:3.72.8}
\WFF^{m}_\cC(\op_\varphi(a)f) = \phi(\WFF^{m}_\cB(f)),\quad f
\in \mathscr{S}^\prime(\rr{d}),
\end{equation}
where $\phi$ is the canonical transformation \eqref{eq:trsympl}, generated by $\varphi$.
\end{thm}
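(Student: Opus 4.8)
\emph{Strategy.} Put $A=\op_\varphi(a)$ and let $\phi=(\phi_1,\phi_2)$ be the canonical transformation \eqref{eq:trsympl}. Since $\phi$ is a bijection (Remark \ref{rem:invtransf}), \eqref{eq:3.72.8} is equivalent to the assertion that $(x_0,\xi_0)\notin\WFF^m_\cC(Af)$ if and only if $(y_0,\eta_0):=\phi^{-1}(x_0,\xi_0)\notin\WFF^m_\cB(f)$. The implication ``$\Longleftarrow$'' is a \emph{microlocality} statement for $\SG$ FIOs of type~I; it rests on the composition Theorems \ref{thm:0.1} and \ref{thm:3.1}, on the $m$-admissibility of $\varphi$, and on the weakly-I $\SG$-ordering, and does not use ellipticity. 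The implication ``$\Longrightarrow$'' is the corresponding \emph{microellipticity} statement, for which we combine the parametrix of Theorem \ref{thm:3.8} (here $\SG$-ellipticity of $a$ and the $\phi$-invariance of $\omega_0$ are needed) with the type~II analogue of the microlocality statement.

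\emph{Microlocality.} Let $(y_0,\eta_0)\notin\WFF^m_\cB(f)$ and put $(x_0,\xi_0)=\phi(y_0,\eta_0)$. By Definition \ref{def:wfsMB} and Remark \ref{psiinvremark} there is a localizer $c_m\in\SG^{0,0}_{1,1}$, a product of (directional) cutoffs, with $\op(c_m)f\in\cB$ and $c_m\equiv1$ on an $m$-conical neighbourhood $V$ of $(y_0,\eta_0)$. As $\varphi$, hence $\phi^{-1}$, is $m$-admissible, choose a localizer $d_m\in\SG^{0,0}_{1,1}$ of the same type, type-$m$ invertible at $(x_0,\xi_0)$ and supported in an $m$-conical neighbourhood $U$ of $(x_0,\xi_0)$ so small that $\phi^{-1}(U)\subseteq V$. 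By Theorem \ref{thm:0.1}, $\op(d_m)\circ A=\op_\varphi(g)$ modulo $\op(\mathscr S)$, with $g\in\SG^{(\omega_0)}_{r,\rho}$ of leading part $d_m(x,\varphi'_x(x,\xi))\,a(x,\xi)$, supported, like $a$, outside $\rr d\times\Omega$. By \eqref{eq:trsympl}, $g(x,\xi)\ne0$ forces $(\varphi'_\xi(x,\xi),\xi)=\phi^{-1}(x,\varphi'_x(x,\xi))\in\phi^{-1}(U)\subseteq V$, so $c_m$ equals $1$ with vanishing derivatives on the microlocal support of $g$; hence Theorem \ref{thm:3.1} gives $\op_\varphi(g)\circ\op(c_m)=\op_\varphi(g)$ modulo $\op(\mathscr S)$. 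Therefore $\op(d_m)Af=\op_\varphi(g)\,\op(c_m)f$ modulo $\mathscr S$; since $\op(c_m)f\in\cB$ and $g$ is supported outside $\rr d\times\Omega$, the weakly-I $\SG$-ordering yields $\op_\varphi(g)\op(c_m)f\in\cC$, so $\op(d_m)Af\in\cC$, i.e. $(x_0,\xi_0)\notin\WFF^m_\cC(Af)$.

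\emph{Microellipticity.} Repeating the argument with the roles of type~I and type~II interchanged (using the type~II composition Theorems \ref{thm:3.2} and \ref{thm:3.3} in place of Theorems \ref{thm:0.1} and \ref{thm:3.1}, or conjugating by $\mathscr F$ through \eqref{eq:typeI-II}) gives: if $\op_\varphi^*(b)$ is continuous from $\widetilde{\cC}$ to $\widetilde{\cB}$, then $\WFF^m_{\widetilde{\cB}}(\op_\varphi^*(b)g)\subseteq\phi^{-1}(\WFF^m_{\widetilde{\cC}}(g))$ for all $g\in\mathscr S'$. Now $\varphi\in\Phr$, $a$ is $\SG$-elliptic and $\omega_0$ is $\phi$-invariant, so by Theorem \ref{thm:3.8} there is a type~II parametrix $B=\op_\varphi^*(b)$, $b\in\SG^{(1/\omega_0)}_{r,\rho}$, with $B\circ A=I$ modulo $\op(\mathscr S)$ (modulo a class harmless for $\WFF^m_\cB$ if $\Omega\ne\emptyset$); and by the continuity results of Section \ref{sec3} together with their type~II counterparts, $B$ maps $\cC$ continuously into $\cB$. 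Applying the type~II microlocality statement with $g=Af$ and using $BAf=f$ up to a term not affecting $\WFF^m_\cB$, we obtain $\WFF^m_\cB(f)=\WFF^m_\cB(BAf)\subseteq\phi^{-1}(\WFF^m_\cC(Af))$, i.e. $\phi(\WFF^m_\cB(f))\subseteq\WFF^m_\cC(Af)$. Combined with the microlocality inclusion, this gives \eqref{eq:3.72.8}.

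\emph{Main obstacle.} The delicate part is the geometric bookkeeping of $m$-conical neighbourhoods under $\phi$ and $\phi^{-1}$: the localizers must be chosen so that their supports nest correctly after transport by the canonical transformation, which is exactly what $m$-admissibility of $\varphi$ (Definition \ref{def:gadm}) guarantees, via Remarks \ref{rem:invtransf} and \ref{rem:transfchar}. One must also check that the ``modulo smoothing'' remainders from the composition theorems of Subsection \ref{subs:2.3} and from the parametrix Theorem \ref{thm:3.8} land in $\cC$, resp.\ $\cB$; these belong to $\op(\mathscr S)$ when $r,\rho>0$ (in particular for the weighted modulation spaces of Theorem \ref{FIOmodcont}, cf.\ Remark \ref{remSGadm}) and, in the limiting cases $r=0$ or $\rho=0$ and in the case $\Omega\ne\emptyset$, to $\op_\varphi(\SG^{(\omega_0\vartheta_{0,-\infty})}_0)$-type classes that are still harmless for $\WFF^m$.
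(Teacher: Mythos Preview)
Your argument is correct in spirit but takes a genuinely different route from the paper. The paper does not split into microlocality and microellipticity via two separate composition arguments; instead it uses the Egorov-type result (Theorem \ref{thm:3.21ell}) in one stroke. Concretely, with $A=\op_\varphi(a)$, $B$ a parametrix of $A$, and $C_m=\op(c_m)$ a localizer at $(y_0,\eta_0)=\phi^{-1}(x_0,\xi_0)$, the paper forms $Q_m=A\circ C_m\circ B=\op(q_m)$ and reads off from Egorov that $q_m=c_m\circ\phi^{-1}\mod\SG^{-1,-1}_{1,1}$, hence $q_m\in\SG^{0,0}_{1,1}$ is type-$m$ invertible at $(x_0,\xi_0)$; then $Q_m(Af)=A(C_mf)\in\cC$ follows directly from the weakly-I ordering alone. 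Your approach instead composes twice (Theorems \ref{thm:0.1} and \ref{thm:3.1}) to produce $\op(d_m)\circ A=\op_\varphi(g)\circ\op(c_m)$ modulo smoothing, with $d_m$ already of the product form of Remark \ref{psiinvremark}. This is more hands-on and has the minor advantage that you land directly on a localizer of the canonical product shape, whereas the paper tacitly uses that a general $q_m\in\SG^{0,0}_{1,1}$ which is type-$m$ invertible also detects $\Theta^m_\cC$ (a consequence of Proposition \ref{charequiv}); conversely, the Egorov route is shorter and packages the two compositions and the nesting of $m$-cones into a single conjugation.

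For the reverse inclusion both arguments run in parallel: the paper simply says ``repeating a similar argument starting from $Af$'', which amounts to applying the same Egorov conjugation with $A$ and $B$ interchanged, while you spell this out as type-II microlocality applied to the parametrix $B$. In either formulation one needs that $B=\op_\varphi^*(b)$ maps $\cC$ to $\cB$; you state this explicitly, the paper leaves it implicit. So your proof is a valid alternative; the essential difference is \emph{Egorov conjugation} versus \emph{two direct compositions}, and the trade-off is economy of argument against avoiding the auxiliary characterization of $\Theta^m$ via non-product symbols.
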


\par

\begin{proof}
We only prove the result for $m=3$. The other cases follow
by similar arguments and are left for the reader.
Let $(y_{0}, \eta_{0})=\phi^{-1}(x_{0}, \xi_{0})\in\Theta ^m_\cB (f)$,
$m\in\{1,2,3\}$, and  let $c_m \in \SG^{0,0}_{1,1}$ be a symbol as
in \eqref{ImageOpcm} and Remark \ref{psiinvremark}
such that $\op(c_m) u \in \cB$. Recalling Remark \ref{rem:trivweight}, the weight 
$\omega(x,\xi)=\vartheta_{0,0}(x,\xi)=1\in\mathscr{P}_{1,1}$ is invariant with respect to any
$\SG$ diffeomorphism with $\SG^0$ parameter dependence.
Let $A=\op_\varphi(a)$, $C_m=\op(c_m)$, and let $B$ be a
parametrix for $A$. 
Then for some $q_m$ we have
$$
Q_m = A\circ C_m \circ B,\qquad Q_m=\op (q_m),
$$
or equivalently,
$$
Q_m\circ A = A\circ C_m\mod \op (\SG ^{-\infty,-\infty)}).
$$

\par

By Theorem \ref{thm:3.21ell} and \eqref{eq:trsympl}, we have
$q_m = c_m \circ \phi^{-1}\mod \SG ^{-1,-1}_{1,1}$, which implies
$q_m\in\SG^{0,0}_{1,1}$. Then, 
$(x_{0}, \xi_{0})\in \Theta ^m_{\cC} (Af)$, since $Q_m (A f) \equiv A(C_mf)
\in \cC$, by the hypotheses on $(\cB,\cC)$. This means that
\begin{equation}
\label{eq:3.72.9}
\phi(\Theta ^{m}_\cB(f)) \subseteq \Theta ^{m} _\cC(Af).
\end{equation}
Complementing \eqref{eq:3.72.9} with respect to $\Omega_m$,
repeating a similar argument starting
from $Af$, recalling Remark \ref{rem:transfchar} and that $\phi$
is a diffeomorphism,
we finally obtain \eqref{eq:3.72.8}.
\end{proof}

\par

The next result is proved in a similar fashion. In fact, with 
notation analogous to the one used in the proof of Theorem
\ref{thm:3.22}, denoting $B=\op ^*_\varphi(b)$,
we have that $Q_m=B\circ C_m\circ B^{-1}$ satisfies
$\Sym{Q_m}=c_m\circ\phi$ modulo lower order terms.
It is then enough to recall Remark \ref{rem:invtransf}.
Of course, when one deals with $\SG$-ordered spaces
$(\cB_1,\cC_1,\cB_2, \cC_2)$, both \eqref{eq:3.72.8}
and \eqref{eq:ellwfbis} hold, as stated in Corollary
\ref{cor:FIOWFSellter}.

\begin{cor}
\label{cor:FIOWFSell}
Let $\varphi\in\Phr$ be $m$-admissible, $m\in\{1,2,3\}$, 
$\omega_0\in \mathscr P_{1,1}(\rr {2d})$, 
$b\in\SG^{(\omega_0)}_{1,1}(\rr{2d})$, supported outside $\Omega\times\rr{d}$,
$\Omega\subseteq\rr{d}$ open. Assume that $\omega_0$ is $\phi^{-1}$-invariant,
where $\phi$ is as  in Theorem \ref{thm:3.8}. Assume also that $b$ is $\SG$-elliptic
and that $(\cB,\cC)$ is a weakly-II $\SG$-ordered pair with respect to
$(r,\rho ,\omega _0,\varphi ,\Omega )$. Then, 
\begin{equation}\label{eq:ellwfbis}
	\WFF^{m}_{\cB}(\op_\varphi^*(b)f) = \phi^{-1}(\WFF^{m}_{\cC}(f)),\quad f
\in \mathscr{S}^\prime(\rr{d}),
\end{equation}
with the inverse $\phi^{-1}$ of the canonical transformation \eqref{eq:trsympl}.
\end{cor}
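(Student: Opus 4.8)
The plan is to mirror the proof of Theorem \ref{thm:3.22}, transporting everything through the adjoint relation and the inverse canonical transformation. First I would fix a point $(x_0,\xi_0)\in\Theta^m_{\cC}(f)$, so that $\op(c_m)f\in\cC$ for some $c_m\in\SG^{0,0}_{1,1}$ type-$m$ invertible at $(x_0,\xi_0)$ as in Remark \ref{psiinvremark}, and set $(y_0,\eta_0)=\phi^{-1}(x_0,\xi_0)$. Let $B=\op_\varphi^*(b)$. Since $b$ is $\SG$-elliptic and $\varphi\in\Phr$, Theorem \ref{thm:3.8} gives a parametrix $B^{-1}$ for $B$, which is an elliptic $\SG$ FIO of type I; here the hypothesis that $\omega_0$ is $\phi^{-1}$-invariant is exactly what is needed so that the weights produced by the composition Theorems \ref{thm:3.4} and \ref{thm:3.5} lie in the correct classes $\mascP_{1,1}(\rr{2d})$ (cf. Lemma \ref{lemma:3.15}).

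Next I would form $Q_m=B\circ\op(c_m)\circ B^{-1}$. Using the calculus, $Q_m$ is again a pseudo-differential operator $\op(q_m)$, and the Egorov-type statement for type II operators — obtained from Theorem \ref{thm:3.21ell} applied to the type I operator $B^{-1}$ together with the adjoint relation, equivalently noting $B=(\op_\varphi(b))^*$ and using \eqref{eq:phi}--\eqref{eq:trsympl} — yields $q_m=c_m\circ\phi$ modulo $\SG^{-1,-1}_{1,1}(\rr{2d})$, hence $q_m\in\SG^{0,0}_{1,1}(\rr{2d})$. Moreover $Q_m\circ B=B\circ\op(c_m)$ modulo $\op(\SG^{(-\infty,-\infty)})=\op(\mathscr S)$. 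Since $(\cB,\cC)$ is weakly-II $\SG$-ordered with respect to $(r,\rho,\omega_0,\varphi,\Omega)$ and $b$ is supported outside $\Omega\times\rr d$, we have $B\colon\cC\to\cB$ continuously, so from $\op(c_m)f\in\cC$ we get $Q_m(Bf)\equiv B(\op(c_m)f)\in\cB$ modulo a Schwartz term. As $q_m$ is type-$m$ invertible at $\phi^{-1}(x_0,\xi_0)$ by Remark \ref{rem:transfchar} (applied with $\phi^{-1}$ in place of $\phi$, which is legitimate by Remark \ref{rem:invtransf} and the trivial weight $\omega=1$ by Remark \ref{rem:trivweight}), this shows $(y_0,\eta_0)\in\Theta^m_{\cB}(Bf)$. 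Hence
\begin{equation*}
\phi^{-1}(\Theta^m_{\cC}(f))\subseteq\Theta^m_{\cB}(\op_\varphi^*(b)f).
\end{equation*}

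Finally I would close the argument by the same symmetric bootstrap used in Theorem \ref{thm:3.22}: complement the last inclusion inside $\Omega_m$, run the identical reasoning with the roles interchanged — now starting from $g=Bf$, using that $B^{-1}$ is an elliptic type I FIO generating the canonical transformation $\phi$ and mapping $\cB\to\cC$ by Theorem \ref{thm:3.22} (or directly by the weakly-I ordering), and using $\op_\varphi(b^{-1})$-type parametrices so that $B^{-1}(Bf)=f$ modulo a smoothing operator — to obtain the reverse inclusion $\Theta^m_{\cB}(Bf)\subseteq\phi^{-1}(\Theta^m_{\cC}(f))$. Since $\phi$ is a diffeomorphism of $T^*\rr d$, taking complements in $\Omega_m$ turns the equality of regular sets into $\WFF^m_{\cB}(\op_\varphi^*(b)f)=\phi^{-1}(\WFF^m_{\cC}(f))$, which is \eqref{eq:ellwfbis}. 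The main obstacle — and the only place real work is hidden — is verifying that the Egorov computation for the type II operator produces $\Sym{Q_m}=c_m\circ\phi$ with the claimed remainder in the generalized $\SG$ classes: one must check that all intermediate weights arising from Theorems \ref{thm:3.1}--\ref{thm:3.5} remain in $\mascP_{1,1}(\rr{2d})$, which is guaranteed precisely by the $\phi^{-1}$-invariance hypothesis on $\omega_0$ together with Lemma \ref{lemma:omegainv} and Lemma \ref{lemma:3.15}, and that the type-$m$ admissibility of $\varphi$ ensures the shapes of the conic neighborhoods are preserved under $\phi^{-1}$ (Remark \ref{rem:invtransf}).
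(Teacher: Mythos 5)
Your proposal is correct and follows essentially the same route as the paper: conjugation $Q_m=B\circ \op(c_m)\circ B^{-1}$ with the parametrix from Theorem \ref{thm:3.8}, the Egorov-type identity $\Sym{Q_m}=c_m\circ\phi$ modulo lower order terms via Theorem \ref{thm:3.21ell}, the weakly-II ordering to get $B(\op(c_m)f)\in\cB$, and Remarks \ref{rem:invtransf}--\ref{rem:transfchar} together with the complement-and-reverse argument of Theorem \ref{thm:3.22}. The paper proves the corollary by exactly this adaptation, so no further comment is needed.
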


\begin{cor}
\label{cor:FIOWFSellter}
Let $\varphi\in\Phr$ be $m$-admissible, $m\in\{1,2,3\}$, 
$\omega_1,\omega_2\in \mathscr P_{1,1}(\rr {2d})$. Moreover, let
$a\in\SG^{(\omega_1)}_{r,\rho}(\rr{2d})$,
$b\in\SG^{(\omega_2)}_{r,\rho}(\rr{2d})$, with $a$ supported outside $\rr{d}\times\Omega$,
$b$ supported outside $\Omega\times\rr{d}$, respectively, where
$\Omega\subseteq\rr{d}$ is open. 
Assume that $a$ and $b$ are $\SG$-elliptic
and that $(\cB_1,\cC_1,\cB_2,\cC_2)$ are $\SG$-ordered with respect to
$$
r,\  \rho,\ \omega _1 ,\ \omega _2 ,\ \varphi \ \text{and}\ \Omega  .
$$
Then, provided that $\omega_1$ and $\omega_2$ satisfy the invariance properties
required in Theorem \ref{thm:3.22} and Corollary \ref{cor:FIOWFSell}, respectively,
for any $f \in \cS^\prime(\rr{d})$, we have
\[
\WFF^{m}_{\cC_1}(\op_\varphi(a)f) = \phi(\WFF^{m}_{\cB_1}(f)),
\]
and
\[
	\WFF^{m}_{\cB_2}(\op_\varphi^*(b)f) = \phi^{-1}(\WFF^{m}_{\cC_2}(f)),
\]
with the canonical transformation $\phi$ in \eqref{eq:trsympl} and its inverse $\phi^{-1}$ .
\end{cor}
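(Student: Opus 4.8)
The plan is to deduce the statement directly from Theorem \ref{thm:3.22} and Corollary \ref{cor:FIOWFSell}, applied separately to the type I operator $\op_\varphi(a)$ and to the type II operator $\op_\varphi^*(b)$. The two pairs $(\cB_1,\cC_1)$ and $(\cB_2,\cC_2)$ interact only through the common phase function $\varphi$, hence through the common canonical transformation $\phi$ of \eqref{eq:trsympl}; there is no genuine coupling between them, so the two identities can be established independently and then simply asserted together.

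First I would unwind Definition \ref{admspacesdef}: the hypothesis that $(\cB_1,\cC_1,\cB_2,\cC_2)$ is $\SG$-ordered with respect to $r,\rho,\omega_1,\omega_2,\varphi,\Omega$ says exactly that $(\cB_1,\cC_1)$ is weakly-I $\SG$-ordered with respect to $(r,\rho,\omega_1,\varphi,\Omega)$ and $(\cB_2,\cC_2)$ is weakly-II $\SG$-ordered with respect to $(r,\rho,\omega_2,\varphi,\Omega)$. Then I would check that Theorem \ref{thm:3.22} applies with $\omega_1$ in place of $\omega_0$ and $(\cB_1,\cC_1)$ in place of $(\cB,\cC)$: $\varphi\in\Phr$ is $m$-admissible, $a\in\SG^{(\omega_1)}_{r,\rho}(\rr{2d})$ is $\SG$-elliptic and supported outside $\rr d\times\Omega$, $\omega_1$ carries the $\phi$-invariance demanded there (this being part of the standing assumptions, with $\phi$ the transformation of Theorem \ref{thm:3.8}), and $(\cB_1,\cC_1)$ is weakly-I $\SG$-ordered as just noted. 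This gives $\WFF^m_{\cC_1}(\op_\varphi(a)f)=\phi(\WFF^m_{\cB_1}(f))$ for every $f\in\cS'(\rr d)$, i.e.\ the first asserted identity. Symmetrically, with $\omega_2$ in place of $\omega_0$ and $(\cB_2,\cC_2)$ in place of $(\cB,\cC)$, I would verify the hypotheses of Corollary \ref{cor:FIOWFSell}: $b\in\SG^{(\omega_2)}_{r,\rho}(\rr{2d})$ is $\SG$-elliptic and supported outside $\Omega\times\rr d$, and $\omega_2$ is $\phi^{-1}$-invariant as required there; this yields the second identity, $\WFF^m_{\cB_2}(\op_\varphi^*(b)f)=\phi^{-1}(\WFF^m_{\cC_2}(f))$.

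I do not expect any substantive obstacle: the corollary is a repackaging of the two preceding results, and the only point deserving a moment's care is the bookkeeping of which invariance hypothesis attaches to which weight — the one on $\omega_1$ refers to $\phi$ itself, whereas the one on $\omega_2$ refers to its inverse $\phi^{-1}$ (cf.\ Remark \ref{rem:invtransf}). This is precisely what the clause ``provided that $\omega_1$ and $\omega_2$ satisfy the invariance properties required in Theorem \ref{thm:3.22} and Corollary \ref{cor:FIOWFSell}, respectively'' encodes, so once these are correctly assigned the two displayed identities hold simultaneously for every $f\in\cS'(\rr d)$, as claimed.
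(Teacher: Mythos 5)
Your proposal is correct and matches the paper's own argument: the paper treats this corollary as an immediate consequence of Theorem \ref{thm:3.22} and Corollary \ref{cor:FIOWFSell}, noting only that the hypothesis of being $\SG$-ordered with respect to $r,\rho,\omega_1,\omega_2,\varphi,\Omega$ splits, by Definition \ref{admspacesdef}, into the weakly-I condition for $(\cB_1,\cC_1)$ with $\omega_1$ and the weakly-II condition for $(\cB_2,\cC_2)$ with $\omega_2$. Your bookkeeping of which invariance hypothesis ($\phi$ versus $\phi^{-1}$, cf.\ Remark \ref{rem:invtransf}) attaches to which weight is exactly the point the paper's formulation encodes, so nothing further is needed.
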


\par

The next result generalizes Theorem \ref{thm:3.22} and Corollaries
\ref{cor:FIOWFSell} and \ref{cor:FIOWFSellter} to the case where
the involved amplitudes are not $\SG$-elliptic. In such a situation,
the set of admissible phase functions needs to be slightly restricted,
similarly to the calculus of Fourier integral operators developed in
\cite{Ku}. Such restriction is not very harmful, since the phase
functions we are mostly interested in are those appearing in the
next Subsection \ref{subs:4.3}, and it can be proved that they fulfill
\eqref{eq:varphitau} below if a sufficiently small ``time interval''
$J^\prime=[-T^\prime,T^\prime]$ is chosen. This can be easily
verified by checking the technique of solution of the involved
eikonal equations, see, e.g., \cite{Ku, coriasco2, CoPa, CoRo}.
Here the symbols satisfy
\begin{equation}\label{eq:abSupport}
\supp a\subseteq \rr d\times \Omega ^\complement ,
\quad
\supp b\subseteq \Omega ^\complement \times \rr d
\end{equation}
for suitable open set $\Omega $, where $\Omega ^\complement$ 
equals $\rr d\setminus \Omega$, and the phase function satisfies
\begin{equation}\label{eq:varphitau}
\begin{gathered}
	|\norm{x}^{-1+|\alpha|}\norm{\xi}^{-1+|\beta|}D^\alpha_xD^\beta_\xi
	\kappa (x,\xi)|\le\tau,\quad
	x,\xi \in \rr d,\ |\alpha+\beta|\le2,
\\
\text{where}\quad
\kappa (x,\xi)=\varphi(x,\xi)-\langle x,\xi\rangle\in\SG^{1,1}_{1,1}(\rr{2d}).
\end{gathered}
\end{equation}

\par

\begin{thm}
\label{thm:3.23}
Let $\varphi \in \Phr$ be $m$-admissible, $m\in\{1,2,3\}$, and fulfill \eqref{eq:varphitau}
for a fixed $\tau \in (0,1)$.
Also let $\omega _1\in \mathscr P_{r,\rho}(\rr {2d})$, $r,\rho \ge 1/2$, and
let $a\in\SG^{(\omega _1)}_{r,\rho}(\rr{2d})$
satisfy \eqref{eq:abSupport}.
Finally assume that 
$(\cB_1,\cC_1)$ is a weakly-I $\SG$-ordered pair with respect to
$(r,\rho ,\omega _1,\varphi ,\Omega )$.
Then,
\begin{equation}
\label{eq:wfincl}
\begin{aligned}
&\WFF^{m}_{\cC_1}(\op_\varphi(a)f) \subseteq \Lambda^m_{\cB_1}(f),
\\
&\Lambda^m_{\cB_1}(f)=\{(x,\xi)=\phi(y,\eta)
\colon (y,\eta)\in\WFF^{m}_{\cB_1}(f)\}^{\mathrm{con}_m},\quad
f \in \cS^\prime(\rr{d}),
\end{aligned}
\end{equation}
where $\phi$ is the canonical transformation generated by $\varphi$
in \eqref{eq:trsympl}.
\end{thm}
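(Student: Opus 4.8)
The plan is to argue by contraposition, following the scheme of the proof of Theorem~\ref{thm:3.22} but replacing the (now unavailable) parametrix of $\op_\varphi(a)$ by a microlocal factorization built from the composition calculus of Subsection~\ref{subs:2.3}. I would treat only $m=3$, the cases $m=1,2$ being analogous. Fix $(x_0,\xi_0)\in\Omega_m\setminus\Lambda^m_{\cB_1}(f)$ and put $(y_0,\eta_0)=\phi^{-1}(x_0,\xi_0)$; it is enough to exhibit a $c\in\SG^{0,0}_{1,1}(\rr{2d})$ of the form in Remark~\ref{psiinvremark}, type-$m$ invertible at $(x_0,\xi_0)$, with $\op(c)\op_\varphi(a)f\in\cC_1$. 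Since $\Lambda^m_{\cB_1}(f)$ is closed and $m$-conical, $(x_0,\xi_0)$ has an $m$-conical neighbourhood whose far part misses $\phi(\WFF^m_{\cB_1}(f))$; using the $m$-admissibility of $\varphi$, hence of both $\phi$ and $\phi^{-1}$ (Remark~\ref{rem:invtransf}), I would extract an $m$-cone $\mathcal W\ni(y_0,\eta_0)$ and $R>0$ with $(\mathcal W\cap\Omega_{m,R})\cap\WFF^m_{\cB_1}(f)=\emptyset$, together with a small $m$-cone $U\ni(x_0,\xi_0)$ and $R'>0$ such that $\phi^{-1}(U\cap\Omega_{m,R'})\subseteq\mathcal W\cap\Omega_{m,R}$.

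Next I would take $c$ as above with support in $U$ and compose on the left with $\op_\varphi(a)$. By Theorem~\ref{thm:0.1} --- applicable since the weight of $c$ is trivial, hence $(\phi,2)$-invariant, and since $r\ge1/2>0$ --- one gets $\op(c)\circ\op_\varphi(a)=\op_\varphi(\widetilde a)\operatorname{Mod}\op(\mathscr S)$, with $\widetilde a\in\SG^{(\omega_1)}_{r,\rho}(\rr{2d})$ supported in $\{(x,\xi):(x,\varphi'_x(x,\xi))\in\supp c\}\cap\supp a$, in particular outside $\rr d\times\Omega$, and with leading term $c(x,\varphi'_x(x,\xi))\,a(x,\xi)$. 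The key point is that on $\supp\widetilde a$ one has $(x,\varphi'_x(x,\xi))\in U\cap\Omega_{m,R'}$, whence $\phi^{-1}(x,\varphi'_x(x,\xi))=(\varphi'_\xi(x,\xi),\xi)\in\mathcal W\cap\Omega_{m,R}$; thus the set $K$ obtained by letting $(x,\xi)$ run over the far part of $\supp\widetilde a$ in $(\varphi'_\xi(x,\xi),\xi)$ is a closed $m$-conical set avoiding $\WFF^m_{\cB_1}(f)$. By the properties of the global wave-front sets from Subsection~\ref{subs:4.1} (see \cite{CJT2}), I can then choose $c'\in\SG^{0,0}_{1,1}(\rr{2d})$ as in Remark~\ref{psiinvremark}, type-$m$ invertible on a conical neighbourhood of $K$ that still avoids $\WFF^m_{\cB_1}(f)$, with $\op(c')f\in\cB_1$.

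It then remains to ``divide $\widetilde a$ by $c'$ along the phase''. Since $(x,\xi)\mapsto c'(\varphi'_\xi(x,\xi),\xi)$ is elliptic on $\supp\widetilde a$, the symbol $\widetilde a_0(x,\xi):=\widetilde a(x,\xi)/c'(\varphi'_\xi(x,\xi),\xi)$, extended by zero, belongs to $\SG^{(\omega_1)}_{r,\rho}(\rr{2d})$ and is still supported outside $\rr d\times\Omega$; correcting it by lower-order terms $\widetilde a_1,\widetilde a_2,\dots$, obtained again by division by $c'(\varphi'_\xi(\cdot),\cdot)$ on $\supp\widetilde a$, and forming $\widetilde a'\sim\sum_j\widetilde a_j$ via Proposition~\ref{propasymp}, Theorem~\ref{thm:3.1} together with $r,\rho\ge1/2$ yields $\op_\varphi(\widetilde a')\circ\op(c')=\op_\varphi(\widetilde a)\operatorname{Mod}\op(\mathscr S)$. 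Consequently $\op(c)\op_\varphi(a)f=\op_\varphi(\widetilde a')\bigl(\op(c')f\bigr)$ modulo $\cS$, and, since $(\cB_1,\cC_1)$ is weakly-I $\SG$-ordered with respect to $(r,\rho,\omega_1,\varphi,\Omega)$ and $\widetilde a'\in\SG^{(\omega_1)}_{r,\rho}(\rr{2d})$ is supported outside $\rr d\times\Omega$, we obtain $\op_\varphi(\widetilde a')(\op(c')f)\in\cC_1$, hence $\op(c)\op_\varphi(a)f\in\cC_1$. As $c$ is type-$m$ invertible at $(x_0,\xi_0)$, this gives $(x_0,\xi_0)\notin\WFF^m_{\cC_1}(\op_\varphi(a)f)$, and letting $(x_0,\xi_0)$ vary over $\Omega_m\setminus\Lambda^m_{\cB_1}(f)$ we conclude \eqref{eq:wfincl}.

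The main difficulty --- and the reason for both the Kumano-Go--type restriction \eqref{eq:varphitau} on $\varphi$ and the hypothesis $r,\rho\ge1/2$ --- lies in the last two paragraphs: lacking ellipticity of $a$, one cannot invert $\op_\varphi(a)$, and the whole scheme rests on repeatedly transferring $\SG$ pseudo-differential factors across $\op_\varphi(a)$ and re-expanding by the calculus of Subsection~\ref{subs:2.3}. Keeping the resulting amplitudes in the classes $\SG^{(\cdot)}_{r,\rho}$ with iteration-independent control of the seminorms --- so that the asymptotic sums converge in the sense of Proposition~\ref{propasymp} and the residuals are genuinely regularizing (Schwartz, because $r,\rho>0$) --- is exactly what \eqref{eq:varphitau} secures, in the spirit of \cite{Ku}. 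One must also verify, throughout, that supports are propagated correctly, so that all auxiliary amplitudes remain supported outside the prescribed sets, and that the maps $\xi\mapsto\varphi'_x(x,\xi)$ and $x\mapsto\varphi'_\xi(x,\xi)$ behave as in Proposition~\ref{prop:3.2}.
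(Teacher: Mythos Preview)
Your argument is correct and reaches the same conclusion, but it is organised differently from the paper's proof. The paper also works by contraposition and fixes a cutoff $q$ (your $c$) near $(x_0,\xi_0)$, but then it inserts a partition $\op(\chi)+\op(1-\chi)$ on the \emph{input} side, with $\chi$ a $\SG^{0,0}_{1,1}$ cutoff concentrated near $\WFF^m_{\cB_1}(f)$ and chosen so that $\phi(\supp\chi)\cap\supp q=\emptyset$. The piece $\op(q)\circ\op_\varphi(a)\circ\op(\chi)$ is then smoothing because, in the asymptotic expansion from Theorems~\ref{thm:0.1} and~\ref{thm:3.1}, each term carries a factor $(\partial^k_\xi q)(x,\varphi'_x(x,\xi))\cdot(\partial^l_x\chi)(\varphi'_\xi(x,\xi),\xi)$ whose two factors have disjoint supports; for the remaining piece one shows $\op(1-\chi)f\in\cB_1$ using a local parametrix in the \emph{pseudo-differential} calculus (Proposition~\ref{charequiv}), and concludes by weak $\SG$-ordering.

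By contrast, you first compose $\op(c)\circ\op_\varphi(a)=\op_\varphi(\widetilde a)$ and then \emph{factor through the FIO}, constructing $\widetilde a'$ by an iterated symbolic division so that $\op_\varphi(\widetilde a')\circ\op(c')=\op_\varphi(\widetilde a)$ modulo smoothing. Both routes rest on the same ingredients --- Theorems~\ref{thm:0.1} and~\ref{thm:3.1}, the identity $\phi^{-1}(x,\varphi'_x(x,\xi))=(\varphi'_\xi(x,\xi),\xi)$, and the existence of a single $c'$ (resp.\ $p$) with $\op(c')f\in\cB_1$ elliptic on the relevant closed $m$-cone disjoint from $\WFF^m_{\cB_1}(f)$. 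The paper's organisation has the advantage that the local inversion is carried out entirely in the $\psi$do calculus, so no division ``along the phase'' is needed and support bookkeeping is minimal. Your route is equally valid, but you should make explicit that the representative $\widetilde a$ of the asymptotic sum can be chosen with $\supp\widetilde a\subseteq\supp a\cap\{(x,\xi):(x,\varphi'_x(x,\xi))\in\supp c\}$, and that $c$ is taken supported in a far $m$-region so that $c'(\varphi'_\xi(\cdot,\cdot),\cdot)$ is genuinely bounded below on all of $\supp\widetilde a$, not just on its far part; once this is done, the iterative construction of $\widetilde a'\sim\sum_j\widetilde a_j$ via Proposition~\ref{propasymp} goes through, and the supports stay outside $\rr d\times\Omega$ as required.
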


\par

\begin{thm}
\label{thm:3.23bis}
Let $\varphi \in \Phr$ be $m$-admissible, $m\in\{1,2,3\}$, and fulfill \eqref{eq:varphitau}
for a fixed $\tau \in (0,1)$.
Also let $\omega_2\in \mathscr P_{r,\rho}(\rr {2d})$, $r,\rho \ge 1/2$, and
let $b\in\SG^{(\omega_2)}_{r,\rho}(\rr{2d})$
satisfy \eqref{eq:abSupport}.
Finally assume that
$(\cB_2,\cC_2)$ is a weakly-II $\SG$-ordered pair with respect to
$(r,\rho ,\omega _2,\varphi ,\Omega )$
Then,
\begin{equation}
\label{eq:wfinclbis}
\begin{aligned}
&\WFF^{m}_{\cB_2}(\op_\varphi^*(b)f) \subseteq \Lambda^{m}_{\cC_2}(f)^*,
\\
&\Lambda^{m}_{\cC_2}(f)^*=\{(y,\eta)=\phi^{-1}(x,\xi)\colon(x,\xi)
\in\WFF^{m}_{\cC_2}(f)\}^{\mathrm{con}_m},\quad
f \in \cS^\prime(\rr{d}),
\end{aligned}
\end{equation}
where $\phi^{-1}$ is the inverse of the canonical transformation $\phi$
in \eqref{eq:trsympl}.
\end{thm}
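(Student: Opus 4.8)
The plan is to follow the scheme of the proof of Theorem~\ref{thm:3.23}, now for Fourier integral operators of type~II and the inverse canonical transformation, building on the composition results of Subsection~\ref{subs:2.3}; note that $1/2\le r,\rho\le1$ puts $(r,\rho)$ in $\mathbb I_3\subseteq\mathbb I_m$, so the type-$m$ cut-off and characteristic-point machinery of Subsection~\ref{subs:4.1} applies. First I would fix $m\in\{1,2,3\}$ and a point $(y_0,\eta_0)\in\Omega_m$ outside $\Lambda^m_{\cC_2}(f)^*$. By definition of the $m$-conical hull and $m$-admissibility of $\phi^{-1}$ (cf.\ Remark~\ref{rem:invtransf}), $f$ is then type-$m$ regular with respect to $\cC_2$ on a whole $m$-conical neighbourhood $V$ of $(x_0,\xi_0):=\phi(y_0,\eta_0)$; hence, by Definition~\ref{defchar}, Remark~\ref{psiinvremark} and a standard covering argument, there is $d\in\SG^{0,0}_{1,1}(\rr{2d})$ with $\supp d\subseteq V$, $d\equiv1$ on a smaller $m$-conical neighbourhood $V_0\ni(x_0,\xi_0)$, and $\op(d)f\in\cC_2$. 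The aim is to produce a type-$m$ invertible symbol $c_m$ at $(y_0,\eta_0)$ with $\op(c_m)\op^*_\varphi(b)f\in\cB_2$, starting from
\[
\op(c_m)\op^*_\varphi(b)f=\op(c_m)\op^*_\varphi(b)\op(d)f+\op(c_m)\op^*_\varphi(b)(I-\op(d))f .
\]

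For the first summand I would apply Theorem~\ref{thm:3.3} (a pseudo-differential operator composed with a type~II operator; since $\rho\ge1/2>0$ the remainder lies in $\op(\cS)$), obtaining $\op(c_m)\circ\op^*_\varphi(b)=\op^*_\varphi(\widetilde b)$ modulo $\op(\cS)$, where the asymptotic expansion of $\widetilde b$ involves only $c_m$ and the derivatives of $b$. Thus $\widetilde b\in\SG^{(\omega_2)}_{r,\rho}(\rr{2d})$ (the outer weight of the pseudo-differential factor being trivial) and, each term of the expansion being supported where $b$ is, $\supp\widetilde b\subseteq\Omega^\complement\times\rr d$. Since $(\cB_2,\cC_2)$ is weakly-II $\SG$-ordered with respect to $(r,\rho,\omega_2,\varphi,\Omega)$, $\op^*_\varphi(\widetilde b)$ maps $\cC_2$ continuously into $\cB_2$; together with $\op(d)f\in\cC_2$ and $\op(\cS)(\cS')\subseteq\cS\subseteq\cB_2$ this gives that the first summand belongs to $\cB_2$.

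The hard part will be the second summand, for which I would show that, provided $c_m$ is concentrated close enough to $(y_0,\eta_0)$, the operator $\op(c_m)\circ\op^*_\varphi(b)\circ(I-\op(d))$ lies in $\op(\cS)$. By Theorem~\ref{thm:3.3} followed by Theorem~\ref{thm:3.2} (a type~II operator composed on the right with a pseudo-differential operator, again with remainder in $\op(\cS)$ as $r\ge1/2>0$), this operator equals $\op^*_\varphi(\widehat b)$ modulo $\op(\cS)$, with $\widehat b$ given by an asymptotic expansion whose generic term is, schematically, a product of a derivative of $c_m$ pulled back through the diffeomorphism generated by $\varphi'_\xi$, a derivative of $1-d$ evaluated along $\xi\mapsto\varphi'_x(\,\cdot\,,\xi)$, the amplitude $b$, and a factor $e^{i\psi}$ with $\psi$ as in \eqref{eq:0.4}. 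Here $m$-admissibility of $\varphi$ (Definition~\ref{def:gadm}) and the Kumano-Go type smallness \eqref{eq:varphitau} enter: since $(x_0,\xi_0)=\phi(y_0,\eta_0)$ and $d\equiv1$ on $V_0$, admissibility lets one shrink the $m$-conical set on which $c_m\equiv1$, and then $\supp c_m$, so that $\phi(\supp c_m)\subseteq V_0$, while \eqref{eq:varphitau} keeps $\phi$ and the auxiliary maps occurring in the composition globally controlled (compare \cite{Ku}). As the pull-backs appearing in $\widehat b$ are exactly those of the canonical transformation, each term of the expansion carries the factors $c_m$ and $1-d$ evaluated at points related by $\phi$, and $\phi(\supp c_m)\subseteq V_0$ is disjoint from $\supp(1-d)$; hence all terms vanish, $\widehat b\in\cS(\rr{2d})$ and $\op^*_\varphi(\widehat b)\in\op(\cS)$, so the second summand lies in $\cS\subseteq\cB_2$.

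Combining the two steps gives $\op(c_m)\op^*_\varphi(b)f\in\cB_2$, i.e.\ $(y_0,\eta_0)\in\Theta^m_{\cB_2}(\op^*_\varphi(b)f)$; as $(y_0,\eta_0)$ was an arbitrary point of $\Omega_m$ outside $\Lambda^m_{\cC_2}(f)^*$, this proves \eqref{eq:wfinclbis}. Alternatively, \eqref{eq:wfinclbis} can be obtained from Theorem~\ref{thm:3.23} via the identity \eqref{eq:typeI-II}, $\op^*_\varphi(b)=\cF^{-1}\circ\op_{-\varphi^*}(b^*)\circ\cF^{-1}$, together with the behaviour of the global wave-front sets under the Fourier transform (which swaps the type-$1$ and type-$2$ components, fixes the type-$3$ one, and sends admissible spaces to admissible spaces), though that route carries heavier bookkeeping. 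The main obstacle remains the regularisation analysis of the second summand: turning the geometric disjointness $\phi(\supp c_m)\cap\supp(1-d)=\emptyset$ into the vanishing, modulo $\cS$, of the composed amplitude $\widehat b$ relies on the shape-preservation encoded in $m$-admissibility together with the quantitative smallness \eqref{eq:varphitau}, in the spirit of the reduction in \cite{Ku}.
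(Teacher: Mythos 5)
Your proposal is correct in outline and rests on exactly the ingredients the paper intends for Theorem \ref{thm:3.23bis} (which is left to the reader as a variant of the proof of Theorem \ref{thm:3.23}): the type-II composition results, Theorems \ref{thm:3.3} and \ref{thm:3.2}, the $m$-admissibility of $\varphi$ together with \eqref{eq:varphitau}, the vanishing of the composed asymptotic expansion forced by support disjointness under the canonical transformation, and the weakly-II $\SG$-ordering for the remaining term. The route differs, however, in the decomposition. The paper's template cuts $f$ with a symbol $\chi$ equal to $1$ on a truncated-conical neighbourhood of $\WFF^m(f)$, so the ``good'' piece $\op(1-\chi)f$ must be placed in the space through the local md-ellipticity/parametrix step (the symbols $p,r,s$ in the proof of Theorem \ref{thm:3.23}), while the ``bad'' piece is killed by $\supp q\cap\phi(\supp\chi)=\emptyset$. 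You instead cut with $d\equiv 1$ near $(x_0,\xi_0)=\phi(y_0,\eta_0)$ and supported in the regular region, so $\op(d)f\in\cC_2$ follows essentially from the definition of the regular set (Remark \ref{psiinvremark}), and the geometric work is shifted to shrinking the test symbol $c_m$ via Definition \ref{def:gadm} so that $\phi(\supp c_m)\subseteq\{d=1\}$. Both arrangements are legitimate; yours buys a cheaper treatment of the good term (for which you could even bypass Theorem \ref{thm:3.3}: $\op^*_\varphi(b)\colon\cC_2\to\cB_2$ by weak-II ordering, and $\op(c_m)$ preserves $\cB_2$ by $\SG$-admissibility, avoiding the support discussion for $\widetilde b$), at the price that the disjointness now involves all of $\supp c_m$. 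Note, exactly as in the paper's detailed $m=3$ case, that the clean vanishing of the expansion requires $c_m$ and $d$ supported in truncated $m$-cones, and for $m=1,2$ the portions of $\supp c_m$ with bounded frequency (resp.\ space) variable are not controlled by admissibility; the paper defers those cases to the Kumano-Go type argument \cite{Ku}, and your sketch inherits the same remaining work there.
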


\par

\begin{cor}
Let $\varphi \in \Phr$ be $m$-admissible, $m\in\{1,2,3\}$, and fulfill \eqref{eq:varphitau}
for a fixed $\tau \in (0,1)$.
Also let $\omega_1,\omega_2\in \mathscr P_{r,\rho}(\rr {2d})$, $r,\rho \ge 1/2$, and
$a\in\SG^{(\omega_1)}_{r,\rho}(\rr{2d})$ and $b\in\SG^{(\omega_2)}_{r,\rho}(\rr{2d})$
satisfy \eqref{eq:abSupport}.
Assume also that
$(\cB_1,\cC_1,\cB_2,\cC_2)$ are $\SG$-ordered with respect to
$$
r,\  \rho,\ \omega _1, \omega_2 ,\ \varphi \ \text{and}\ \Omega  .
$$
Then both \eqref{eq:wfincl} and \eqref{eq:wfinclbis} hold.
\end{cor}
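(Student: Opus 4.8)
The plan is to observe that this corollary is a direct combination of Theorems \ref{thm:3.23} and \ref{thm:3.23bis}, obtained simply by unpacking the definition of an $\SG$-ordered quadruple. First I would recall Definition \ref{admspacesdef}: the hypothesis that $(\cB_1,\cC_1,\cB_2,\cC_2)$ are $\SG$-ordered with respect to $r$, $\rho$, $\omega_1$, $\omega_2$, $\varphi$ and $\Omega$ means, \emph{by definition}, that $(\cB_1,\cC_1)$ is a weakly-I $\SG$-ordered pair with respect to $(r,\rho,\omega_1,\varphi,\Omega)$ and that $(\cB_2,\cC_2)$ is a weakly-II $\SG$-ordered pair with respect to $(r,\rho,\omega_2,\varphi,\Omega)$. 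No further work is needed to produce these two facts; they are contained in the hypothesis.

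Next I would check that the remaining hypotheses match those of the two theorems. Since $\varphi\in\Phr$ is $m$-admissible and satisfies \eqref{eq:varphitau} for the fixed $\tau\in(0,1)$, and since $\omega_1\in\mathscr P_{r,\rho}(\rr{2d})$ with $r,\rho\ge1/2$ and $a\in\SG^{(\omega_1)}_{r,\rho}(\rr{2d})$ satisfies \eqref{eq:abSupport} (in particular $\supp a\subseteq\rr d\times\Omega^\complement$), all hypotheses of Theorem \ref{thm:3.23} are met with the pair $(\cB_1,\cC_1)$; applying that theorem yields \eqref{eq:wfincl}. Symmetrically, $\omega_2\in\mathscr P_{r,\rho}(\rr{2d})$ with $r,\rho\ge1/2$ and $b\in\SG^{(\omega_2)}_{r,\rho}(\rr{2d})$ satisfies \eqref{eq:abSupport} (in particular $\supp b\subseteq\Omega^\complement\times\rr d$), so all hypotheses of Theorem \ref{thm:3.23bis} are met with the pair $(\cB_2,\cC_2)$; applying that theorem yields \eqref{eq:wfinclbis}. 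This completes the argument.

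There is essentially no obstacle here: the corollary is a bookkeeping statement whose sole content is that the compact phrase ``$\SG$-ordered quadruple'' packages exactly the two pairwise hypotheses needed by the preceding two theorems. The only point requiring the slightest care is to confirm that the support conditions \eqref{eq:abSupport} on $a$ and $b$ indeed coincide with the support conditions appearing, respectively, in the weakly-I and weakly-II ordering requirements of Definition \ref{admspacesdef} (amplitude supported outside $\rr d\times\Omega$ for type I, outside $\Omega\times\rr d$ for type II), which is immediate from the definitions.
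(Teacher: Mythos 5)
Your proposal is correct and coincides with the paper's treatment: the corollary is stated there without a separate proof precisely because, as you observe, Definition \ref{admspacesdef} of an $\SG$-ordered quadruple packages exactly the weakly-I and weakly-II hypotheses needed to invoke Theorems \ref{thm:3.23} and \ref{thm:3.23bis} for $(\cB_1,\cC_1)$ and $(\cB_2,\cC_2)$, respectively. Your check that the support conditions \eqref{eq:abSupport} match those in the two theorems is the only verification required, and it is done correctly.
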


\par

In the results above, $V^{\mathrm{con}_m}$ for $V\subseteq \Omega_m$,
is the smallest $m$-conical subset of $\Omega_m$ which includes $V$, $m\in\{1,2,3\}$.

\medskip

We prove only Theorem \ref{thm:3.23}. Theorem \ref{thm:3.23bis} follows by similar
arguments and is left for the reader.

\par

\begin{proof}[Proof of Theorem \ref{thm:3.23}]
	Since here we are dropping the ellipticity hypothesis on the amplitude $a$, we use 
	only the composition results between generalised 
	$SG$ pseudo-differential operators and Fourier integral
	operators established in Subsection \ref{subs:2.3}. That is, the proofs of the
	theorem again rely on the generalised $\SG$ asymptotic expansions 
	discussed in \cite{CoTo}, and on the properties of the admissible phase functions. 
		We now prove \eqref{eq:wfincl} in detail for the case $m=3$, by showing the opposite inclusion
		between the complements of the involved sets with respect to $\Omega_3$. 
		In the sequel, we write $\cB$ and $\cC$ in place of $\cB_1$ and $\cC_1$, respectively.
					
	\medskip	
		
		Let $(x_0,\xi_0)\notin\Lambda^3_\cB(f)$ for $f\in\cB$, and 
		set $2N=\min\{|x_0|,|\xi_0|\}$>0. 
		By its definition in \eqref{eq:wfincl},
		$\Lambda^3_{\cB}(f)$ is a closed $3$-conical set. Then, choosing $\varepsilon>0$ 
		sufficiently small, it is possible to find a $3$-conical set of the form
		\begin{align*}
			\Gamma_{3,x_0,\xi_0}^{4\varepsilon,4\varepsilon,N/4}
			&=\left\{
			(x,\xi)\in\rr{2d}\,\colon\,\left|\frac{x}{|x|}-\frac{x_0}{|x_0|}\right|<4\varepsilon,\right.
			\\
			&\hspace*{34mm}\left.\left|\frac{\xi}{|\xi|}-\frac{\xi_0}{|\xi_0|}\right|<4\varepsilon,
			|x|,|\xi|\ge \frac{N}{4}
			\right\}
		\end{align*}
		such that $\Gamma_{3,x_0,\xi_0}^{4\varepsilon,4\varepsilon,N/4}
		\cap\Lambda^3_{\cB}(f)=\emptyset$. Then, as it is also possible (see Subsection \ref{subs:4.1} above
		and \cite{Co}), pick $q\in\SG^{0,0}_{1,1,}$ such that 
		\[
			\supp q\subseteq\Gamma^{2\varepsilon,2\varepsilon,N/2}_{3,x_0,\xi_0}
			\text{ and }
			(x,\xi)\in\Gamma^{\varepsilon,\varepsilon,N}_{3,x_0,\xi_0}\Rightarrow
			q(x,\xi)=1.
		\]
		We now observe that $(y_0,\eta_0)=\phi^{-1}(x_0,\xi_0)\notin\WFF^3_\cB(f)$, in view
		of the definition of $\Lambda^3_\cB(f)$. Setting $2\widetilde{N}=\min\{|y_0|,|\eta_0|\}$,
		we can consider the subset of $\Omega_3$ given by
		\[
			W=\WFF^3_\cB(f)\cap\Omega_3^{\widetilde{N}}.
		\]
		$W$ is of course closed, and, by Remark \ref{rem:wfcon} it is $3$-conical.
		Then, there exist two $3$-conical neighborhoods $U,V$ of $W$ such that 
		$W\subset V\subset U\subset\Omega_3$. For instance, for an arbitrarily small $\tilde{\delta}>0$,
		one can consider the coverings of $W$ given by
		\[
			\widetilde{U}=\bigcup_{(z_0,\zeta_0)\in W}
			\Gamma^{4\tilde{\delta},4\tilde{\delta},\widetilde{N}/4}_{3,z_0,\zeta_0},
			\quad
			\widetilde{V}=\bigcup_{(z_0,\zeta_0)\in W}
			\Gamma^{2\tilde{\delta},2\tilde{\delta},\widetilde{N}/2}_{3,z_0,\zeta_0}.
		\]
		By a standard compactness argument on the unit sphere of $\rr{d}$, define $V$ and $U$
		as suitable finite subcoverings extracted from $\widetilde{V}$ and $\widetilde{U}$, respectively.
		Of course, since 
		$\Gamma^{2\tilde{\delta},2\tilde{\delta},\widetilde{N}/4}_{3,z_0,\zeta_0}
		\subset
		\Gamma^{4\tilde{\delta},4\tilde{\delta},\widetilde{N}/4}_{3,z_0,\zeta_0}$, we also get $W\subset V \subset U$,
		as desired. Then, take a symbol $\chi\in\SG^{0,0}_{1,1}$ such that
		\begin{align*}
			&\supp\chi\subset U,(y,\eta)\in V\Rightarrow \chi(y,\eta)=1,
			\\
			&
			\supp q\cap \{(x,\xi)=\phi(y,\eta)\;\colon (y,\eta)\in\supp\chi\}=\emptyset,
		\end{align*}
		All of the above is possible, choosing $\tilde{\delta}$ small enough, 
		in view of the hypotheses and of (3) in Definition \ref{def:gadm}. Indeed, we can start
		from a $3$-conical neighbourhood $Z\supset \Lambda^3_\cB(f)\cap\Omega_3^N$, obtained as a finite
		union of sets of the form $\Gamma_{3,t_0,\tau_0}^{2\varepsilon,2\varepsilon,N/2}$, 
		$(t_0,\tau_0)\in\Lambda^3_\cB(f)$,
		disjoint from $\Gamma^{4\varepsilon,4\varepsilon,N/4}_{3,x_0,\xi_0}$, by choosing 
		$\varepsilon>0$ suitably small. Observing that all
		the involved sets are $3$-conical, it is then possible to choose 
		$\tilde{\delta}$ small enough such that $\phi(U)\subset Z$, and $\chi$ with the desired properties.
		Let us now consider
		\begin{equation}\label{eq:qa1}
		\begin{aligned}
		 \hspace*{5mm}[\op(q)\circ\op_\varphi(a)](f)&=[\op(q)\circ\op_\varphi(a)\circ\op(1-\chi)]f
		\\
								&+[\op(q)\circ\op_\varphi(a)\circ\op(\chi)]f.
		\end{aligned}
		\end{equation}
		Recalling Remark \ref{rem:trivweight}, the weight $\vartheta_0,0(x,\xi)=1$ is invariant with
		respect to any $\SG$ diffeomorphism with $\SG^0$ parameter dependence. In the analysis of
		$C=\op(q)\circ\op_\varphi(a)\circ\op(\chi)$, we can then apply
		Theorems \ref{thm:0.1} and \ref{thm:3.1}. We find $C=\op_\varphi(c)$ with
		\begin{align*}
			c(x,\eta)\sim\sum 
			p_{\alpha\beta k l}(x,\eta)
			&\cdot(\partial^k_\xi q)(x,\varphi^\prime_x(x,\eta))
			\cdot(\partial^\alpha_\xi\partial^\beta_x a)(x,\eta)
			\\
			&\cdot(\partial^l_x\chi)(\varphi^\prime_\xi(x,\eta),\eta)\sim 0,
		\end{align*}
		which implies that $C\colon\cS^\prime\to\cS$. In fact, setting $\xi=\varphi^\prime_x(x,\eta)$,
		$y=\varphi^\prime_\xi(x,\eta)$, by \eqref{eq:trsympl} we have $(x,\xi)=\phi(y,\eta)$,
		and, by construction, $\supp\partial^k_\xi q$ $\cap\phi(\supp\partial^l_x\chi)=\emptyset$.
		Now, setting
		\[
			\Sigma=\{(y,\eta)\in\supp(1-\chi)\colon|y|,|\eta|\ge \widetilde{N}/2\},
		\]
		again by construction we have $\Sigma\cap\WFF^3_\cB(f)=\emptyset$. Then, there exist
		$p\in\SG^{0,0}_{1,1}$ such that $\op(p)f\in\cB$ and $p(x,\xi)\ge C >0$ on $\Sigma$, and
		$r,s\in\SG^{0,0}_{1,1}$ such that
		\begin{equation}\label{eq:sm}
			\op(r)-\op(s)\circ\op(p)\colon\cS^\prime\to\cS,
		\end{equation}
		with $r(x,\xi)\equiv1$ for $|x|,|\xi|\ge\frac{N}{2}$ belonging
		to a $3$-conical neighborhood of $\Sigma$. This can be proved by relying
		on the concept of $\SG$-ellipticity with respect to a symbol (or local md-ellipticity, cfr. \cite{Co}, Ch. 2, \S 3). 
		We can write
		\begin{align*}
			\op(1-\chi)f&=[\op(1-\chi)\circ\op(1-r)]f
			\\
			&+[\op(1-\chi)\circ[\op(r)-\op(s)\circ\op(p))]f
			\\
			&+[\op(1-\chi)\circ\op(s)][\op(p)f].
		\end{align*}
		The first term is in $\cS$, since the symbols of the two operators in the composition
		have, by construction, disjoint supports. The second term is in $\cS$, too,
		by \eqref{eq:sm}. The third term is in $\cB$, since this is true for $\op(p)f$,
		$\op(1-\chi)\circ\op(s)=\op(\lambda)$, with $\lambda\in\SG^{0,0}_{1,1}$,
		and $\cB$ is $\SG$-admissible. Then, by all the considerations above,
		the mapping properties of $\op_\varphi(a)$,
		the fact that also $\cC$ is $\SG$-admissible
		and that $q\in\SG^{0,0}_{1,1}$,
		\begin{align*}
			[\op(q)\circ\op_\varphi(a)]f&=[\op(q)\circ\op_\varphi(a)\circ\op(1-\chi)]f\mod\cS
			\\
			&=[\op(q)\circ\op_\varphi(a)]\underbrace{[\op(1-\chi)f]}_{\in\cB}\mod\cS
			\\
			&\Rightarrow [\op(q)\circ\op_\varphi(a)]f\in\cC,
		\end{align*}
		which proves $(x_0,\xi_0)\notin \WFF^3_\cC(\op_\varphi(a)f)$, and the claim.

	We observe that \eqref{eq:wfincl} for the case $m=1$ can be proved by the same argument used, e.g.,
	in \cite{Ku}, Ch. 10, \S 3. The case $m=2$ of \eqref{eq:wfincl} can then be obtained in a completely
	similar fashion, by exchanging the role of variable and covariable. The
	details are left for the reader.
\end{proof}

\begin{rem}
As it was observed in \cite{CJT2}, there is a simple and useful relation between the global 
wave-front set of $f$ and of $\widehat{f}$. Namely, with $m, n\in \{ 1,2,3 \}$ such that $n$ 
equals $2$, $1$ and $3$, when $m$ equals $1$, $2$ and $3$, respectively, we have
\[
	T \big (\WF{m}_{\cB}(f) \big ) = \WF{n}_{\cB_T}(\widehat f),
\]
where $\cB=M(\omega,\mathscr B)$, the torsion $T$ is given by
$T(x,\xi)=(-\xi,x)$, and $\mathscr B_T=\sets {F\circ T=T^*F}{F\in \mathscr B},$
$\omega_T=\omega\circ T$, $\cB_T=M(\omega_T,\mathscr B_T)$. Notice that
$\cF$ is bijective and continuous, together with its inverse, from $\cB_T$ onto $\cB$.

It is also
immediate to obtain a similar relation among the wave-front sets of $f$ and $\check{f}$,
where $\check{f}=f\circ R$ is the pull-back of $f$ under the action of the reflection $R(y)=-y$.
Indeed, since obviously, for any $a\in\SG^{m,\mu}_{r,\rho}$ and $f\in\cS^\prime$,
\[
	\op(a)\check{f}=[\op({\check{a}})f]\,\check{},
\]
it follows, for $m\in\{1,2,3\}$,
\[
	R(\WFF^m_\cB(f))=\WFF^m_{\check{\cB}}(\check{f}),
\]
where $\check{\mathscr B}=\sets {F\circ R=R^*F}{F\in \mathscr B}$,
$\check{\cB}=M(\omega_R,\check{\mathscr B})$, where
$\omega_R=\omega\circ R$. Notice that, in many cases, $\check{\cB}$=$\cB$:
for instance, this is true for all the functional spaces considered in Section \ref{sec3}, and, in general,
for all $M(\omega,\mathscr{B})$ such that $\check{\mathscr{B}}=\mathscr{B}$ and $\omega$ is even. 
Similarly to the above, $\check{}=R^*$ is bijective and continuous, together with its inverse, from 
$\cB$ onto $\check{\cB}$.

By \eqref{eq:typeI-II}, rewritten as
\[
	\op^*_{-\varphi^*}(a^*)f=(\cF\circ\op_\varphi(a)\circ\cF^{-1}f)\,{\check{}},
	\quad f\in\cS^\prime,
\]
and the above definitions of $\cB_T$ and $\check{\cB}$,
it also follow that, if $(\cB,\cC)$ is weakly-I $\SG$-ordered with
respect to $(r,\rho ,\omega _0,\varphi ,\Omega )$,
we find that, for any $a\in\SG^{(\omega_0)}_{r,\rho}$, supported
outside $\rr{d}\times\Omega$,
$\op^*_{-\varphi^*}(a^*)\colon\cB_T\rightarrow\check{\cC}_T$
continously, that is, $(\check{\cC}_T,\cB_T)$ is weakly-II $\SG$-ordered
with respect to $(r,\rho ,\omega _0,\varphi ,\Omega )$.
\end{rem}

\par
\subsection{Applications to $\SG$-hyperbolic problems}\label{subs:4.3}
In this subsection we apply the results obtained above to the
$\SG$-hyperbolic problems considered in \cite{CoMa,CoPa}, to which
we refer for the details omitted here.
We prove that, under natural conditions, the singularities described by the generalised
wave-front sets $\WFF^m_\cB(U_0)$, $m=1,2,3$, for a scalar or vector-valued
initial data $U_0\in\cB$, propagate to the solution $U(t)$, $t\in[-T,T]$,
in the sense that the points of $\WFF^m_\cC(U(t))$, $m=1,2,3$ lie on 
bicharacteristics curves determined by the phase functions
of the Fourier operators $A(t)$ such that, modulo smooth remainders,
$U(t)=A(t)U_0$. Here we choose, for simplicity, $\omega_0(x,\xi)=\norm{x}\norm{\xi}$.

Notice that the hyperbolic operators involved in such Cauchy problems 
arise naturally as local representations of (modified) wave operators of the form
$L=\Box_g-V$, with a suitable potential $V$ and the D'Alembert operator
$\Box_g$, on manifolds of the form $\rr{}_t\times M_x$, equipped with a hyperbolic
metric $g=\mathrm{diag}(-1,h)$, where $h$ is a suitable Riemannian
metric on the manifold with ends $M$. In this way,
\[
	L=\Box_g-V=-\partial_t^2+\Delta_h-V=-\partial_t^2+P,
\] 
where $\Delta_h$ is the Laplace-Beltrami operator on $M$ associated with the metric $h$
and we have set $P=\Delta_h-V$.
In the following Example \ref{ex:hypmf}, we show that this indeed occurs,
considering a rather simple situation with $\dim M=2$. 

\begin{example}\label{ex:hypmf}
Assume $\dim M=2$ and consider, as local model of one of the ``ends'' of $M$, the cylinder in $\rr{3}$
given by $u^2+v^2=1$, $z>1$, that is the manifold $\mathcal{C}=S^1\times(1,+\infty)$.
First, we have to equip $\ciliuno$ with a $\esse$-structure, namely, a $\SG$-compatible atlas
(see \cite{Co, Sc}). This can be easily accomplished here, by choosing a standard product atlas
on $S^1\times(1,+\infty)$, identifying $S^1$ with the unit circle in $\rr{2}$ centred at the
origin, as we now explain. With coordinates $(u,v)$ on $\rr{2}$, set
\begin{equation*}
\Omega'_1 := S^1 \backslash \{(0,1)\}, \,\,\,\,\,\,\, \Omega'_2 := S^1 \backslash \{(0,-1)\},
\end{equation*}
\begin{align*}
&\nu'_1 : \Omega'_1  \rightarrow \rr{}\colon(u,v)  \mapsto \frac{u}{1-v},
\\
&\nu'_2 : \Omega'_2  \rightarrow \rr{}\colon(u,v) \mapsto \frac{u}{1+v}
\end{align*}
It is immediate to show that $(\nu'_1)^{-1} : \nu'_1(\Omega'_1)  \rightarrow  \Omega'_1 \subset S^1$ is
\begin{equation*}
t \mapsto \left(\frac{2t}{1+t^2}, -\frac{1-t^2}{1+t^2} \right),
\end{equation*}
and $(\nu'_2)^{-1} : \nu'_2(\Omega'_2)  \rightarrow  \Omega'_2 \subset S^1$ is
\begin{equation*}
t \mapsto \left(\frac{2t}{1+t^2}, \frac{1-t^2}{1+t^2} \right),
\end{equation*}
so that, for $\displaystyle t \in \nu'_2(\Omega'_1 \bigcap \Omega'_2)=(-\infty,0)\cup(0,+\infty)$, we find
\begin{equation*}
\begin{split}
\nu_{12}^\prime(t)  = \nu'_1((\nu'_2)^{-1}(t)) = \nu'_1 \left(\frac{2t}{1+t^2}, \frac{1-t^2}{1+t^2} \right) = \frac{1}{t}. 
\end{split}
\end{equation*}
Now set
\begin{equation*}
\Omega_1 := \Omega'_1 \times (1,+\infty), \,\,\,\,\,\,\, \Omega_2 := \Omega'_2 \times (1,+\infty),
\end{equation*}
define $\nu_1 : \Omega_1  \rightarrow  U_1 \subset \rr{2}$ by
\begin{equation*}
(u,v,z)  \mapsto (\nu'_1(u,v) , 1) \frac{z}{\sqrt{1 + (\nu'_1(u,v))^2}} = \left(\frac{u}{1-v} , 1\right) \frac{z}{\sqrt{1 + \dfrac{u^2}{(1-v)^2}}},
\end{equation*}
and $\nu_2 : \Omega_2  \rightarrow  U_2 \subset \rr{2}$ by
\begin{equation*}
(u,v,z)  \mapsto (\nu'_2(u,v) , 1) \frac{z}{\sqrt{1 + (\nu'_2(u,v))^2}} = \left(\frac{u}{1+v} , 1\right) \frac{z}{\sqrt{1 + \dfrac{u^2}{(1+v)^2}}}.
\end{equation*}
\noindent Again, it is easy to obtain the expressions of $\nu_1^{-1} : U_1  \rightarrow  \Omega_1$
and $\nu_2^{-1} : U_2  \rightarrow  \Omega_2$, and to prove that, with coordinates $x=(x_1,x_2)$ on $\rr{2}$,
\begin{equation*}
\begin{split}
\nu_{12}(x_1,x_2)  &  = \nu_1((\nu_2)^{-1}(x_1,x_2))
 = \left(\frac{x_2}{x_1},1\right) x_1 =(x_2 , x_1),
\end{split}
\end{equation*}
which shows that the atlas $\{(\Omega_j, \nu_j), j=1,2\}$ defines a $\esse$-structure on $\ciliuno$,
since $\norm{\nu_{12}(x)}=\norm{x}$ (see again, e.g., \cite{Co,Sc}).
%
%
Next, for any $\mu>0$, define a metric $h'$ on $\{(u,v,z) \in \rr{3} : z > 1\}$ by
\begin{equation*}
(h'_{ij}) := \left(
\begin{array}{ccc}
\displaystyle \frac{z^2}{4 \semi{z}^\mu} & 0 & 0 \\
0 & \displaystyle \frac{z^2}{4 \semi{z}^\mu} & 0 \\
0 & 0 & \displaystyle\frac{1}{ \semi{z}^\mu}
\end{array} \right). 
\end{equation*}
With $x \in U_1$, and denoting by $J_1$ the Jacobian matrix of $\nu_1^{-1}$,
it turns out that the pull-back metric $h := (\nu_1^{-1})^* h'$ on $\ciliuno$ is given by
\begin{equation*}
(h_{ij}) =  J_1 \, ((\nu_1^{-1})^*h'_{ij})|_{U_1}\, J_1^t = \left(
\begin{array}{cc}
\displaystyle \frac{1}{ \semi{x}^\mu} & 0  \\
0 & \displaystyle \frac{1}{ \semi{x}^\mu} \\
\end{array} \right).
\end{equation*}
In the same way, one can show that the metric $h$ has the same local expression for $x \in U_2$. Finally, let us compute the Laplace-Beltrami operator on $\ciliuno$ associated with $h$ in the
chosen local coordinates. We have, of course, $(h^{ij}) = \mathrm{diag}(\semi{x}^\mu,\semi{x}^\mu)$
and $\sqrt{|h|} = \semi{x}^{-\mu} $, thus, for any $f \in C^\infty(\ciliuno)$,
\begin{equation*}
\begin{split}
\Delta_h f & = \frac{1}{\semi{x}^{-\mu}} \sum_{i,j = 1}^2 \frac{\partial}{\partial x^j} \left( \semi{x}^{-\mu} h^{ij} \frac{\partial f}{\partial x^i}\right) \\
& =  \semi{x}^{\mu} \sum_{i,j=1}^2  \frac{\partial}{\partial x^j} \left( \delta^{ij} \frac{\partial f}{\partial x^i}\right) \\
& = \semi{x}^{\mu} \sum_{i=1}^2  \frac{\partial^2 f}{\partial x_i^2} =  \semi{x}^{\mu} \left( \frac{\partial^2}{\partial x_1^2} + \frac{\partial^2}{\partial x_2^2}\right) f,
\end{split}
\end{equation*}
that is
\begin{equation*}
\Delta_h = \norm{x}^\mu \Delta,
\end{equation*}
where $\Delta$ is the standard Laplacian on $\rr{2}$. Choosing $V(x)=\norm{x}^\mu$, 
the local symbol of $P=\Delta_h-V$ is
\begin{equation}
\label{eq:symdelta}
p(x,\xi) = - \norm{x}^\mu \norm{\xi}^2 = -(1+x_1^2+x_2^2)^{\frac{\mu}{2}} (1+\xi_1^2 + \xi_2^2),
\end{equation}
which obviously belongs to $\SG^{2,\mu}_{1,1}(\rr{2}\times\rr{2})$ and is $\SG$-elliptic.
In \cite{CoMa2}, the spectral theory for elliptic self-adjoint operators, generated by local symbols with 
(different) orders $m,\mu>0$, has been considered. On the other hand,
the case $\mu=2$ is of special interest in the context of the $\SG$-hyperbolic operators (see below),
since then we have that $L$, in local coordinates, is given by
\[
	L=\Box_g-V=-\partial_t^2+\Delta_h-V=-\partial_t^2-\norm{x}^2(1-\Delta)=D_t^2-\norm{x}^2\norm{D_x}^2.
\]
\end{example}

In the sequel of this subsection, the subscript ``$\mathrm{cl}$'' denotes the subclasses of
$\SG$ symbols which are classical, see \cite{CoPa}. Notice that the symbol \eqref{eq:symdelta} 
actually belongs to $\SG^{2,\mu}_{1,1,\mathrm{cl}}(\rr{2}\times\rr{2})$.
We first need to recall some definitions and results, mainly taken from
\cite{coriasco, coriasco2, CoPa}.

\begin{defn}
    \label{def:4.13}
    Let $J=[-T,T] \subset \rr{}$, $T > 0$, and consider the linear operator
    \begin{equation}
	\label{eq:4bis.69}
	L = D_{t}^\nu + P_{1}(t)\,D_{t}^{\nu-1} + \dots + P_{\nu}(t),
    \end{equation}
    with $P_j(t)=\Op{p_j(t)}$,
    $p_{j} = p_{j}(t,x,\xi) \in C^\infty(J,\SG^{1,1}_{1,1,\mathrm{cl}}(\rr{2d}))$. Let
    \[
    l(x,\xi,t,\tau) = \tau^\nu + q_{1}(t,x,\xi)\tau^{\nu-1} + \dots + q_{\nu}(t,x,\xi)
	\]
	 be the principal symbol of $L$, with
	$q_{j} \in C^\infty(J,\SG^{j,j}_{1,1,\mathrm{cl}}(\rr{2d}))$
	such that $q_{j}(t,.,..)$ is the principal symbol of $p_j(t,.,..)$, in the sense of
		 \eqref{eq:classsymp}.
    $L$ is called $\SG$-classical hyperbolic with constant multiplicities if
    the characteristic equation
    \begin{equation}
	\label{eq:4bis.73}
	\tau^\nu + q_{1}(t,x,\xi)\tau^{\nu-1} + \dots + q_{\nu}(t,x,\xi) = 0
    \end{equation}
    has $\mu \le \nu$ distinct real roots $\tau_{j} = \tau_{j}(t,x,\xi)
    \in C^\infty(J,\SG^{1,1}_{1,1,\mathrm{cl}}(\rr{2d}))$ with multiplicities $l_{j}$, $1\le l_j\le \nu$,
    $j=1,\dots,\mu$, which satisfy, for a suitable $C > 0$ and all $t\in J$, $x,\xi\in\rr{d}$,
    \begin{equation}
	\label{eq:4bis.2}
        \tau_{j+1}(t,x,\xi) - \tau_{j}(t,x,\xi) \ge
	C \norm{\xi} \norm{x}, \mbox{  } j = 1, \dots, \mu - 1.
    \end{equation}
    $L$ is called strictly hyperbolic if it is hyperbolic with constant
    multiplicities and the multiplicity of all the $\tau_{j}$, $j=1,\dots,\nu=\mu$, is equal to 1.
\end{defn}
A standard strategy to solve the Cauchy problem
\begin{equation}
    \label{eq:4bis.74}
    \left\{
     \begin{array}{ll}
	 L u(t) = 0,          & t \in J,
	 \\
	 D_{t}^k u(0) = u^k_{0},     & k=0,\dots,\nu-1,
     \end{array}
    \right.
\end{equation}
for $L$ hyperbolic with constant multiplicities
and initial data $u_{0}^k$, $k=1,\dots,\nu-1$, chosen in appropriate functional spaces,
is to show that this is equivalent to solving, modulo smooth elements, 
a Cauchy problem for a first order system
\[
    \left\{
    \begin{array}{l}
      \dfrac{\partial U}{\partial t}(t) - iK(t)\,U(t)  = 0,
      \mbox{  } t \in J,\\[.3cm]
      U(0) = U_{0},
    \end{array}
    \right.
\]
with a \emph{coefficient matrix} $K$ of special form. In our case, one obtains that
$K = \Op{(k_{ij}(t,x,D))_{i,j}}$, is a $\mu\nu \times \mu\nu$ matrix of $\SG$ pseudo-differential
operators with symbols $k_{ij} \in C^\infty(J, \SG^{1,1}_{1,1,\mathrm{cl}})$.
Under suitable assumptions, see \cite{CoMa, CoPa},
the principal part $k_{1}$ of $k = k_{1} + k_{0}$, $k_{j} \in
C^\infty(J,\SG^{j,j}_{1,1,\mathrm{cl}})$, $j=0,1$, turns out to be diagonal, so that the
system will be symmetric, cfr. \cite{Co, coriasco, coriasco2}. This implies that
the corresponding Cauchy problem is well-posed.
One of the main advantages for using
this algorithm is the following Proposition \ref{prop:4bis.2},
which is an adapted version
of the Mizohata Lemma of Perfect Factorization, proved in
\cite{CoRo} for the general $\SG$ symbols (see also the references quoted therein).

\begin{prop}
    \label{prop:4bis.2}
    Let $L$ be a $\SG$-classical
    hyperbolic linear operator with constant multiplicities
    $l_{j}$, $j=1,\dots,\mu \le \nu$, as in Definition
    \ref{def:4.13}. Then, it is possible to factor $L$ as
    \[
	L = L_{\mu} \cdots L_{1} + \sum_{s=1}^\nu \Op{r_{s}(t)} \, D_{t}^{\nu-s}
    \]
    with $L_{j}= (D_{t} - \Op{\tau_j(t)})^{l_{j}} + \sum_{k=1}^{l_{j}}
           \Op{s_{jk}(t)} \, (D_{t} - \Op{\tau_j(t)})^{l_{j}-k}$ and
    \begin{align*}
	&
	s_{jk} \in C^\infty(J, \SG^{k-1,k-1}_{1,1,\mathrm{cl}}(\rr{2d})),
	r_{s} \in C^\infty(J,\cS(\rr{2d})),
	\\
	&
	j=1, \dots, \mu, k = 1, \dots, l_{j}, s= 1, \dots, \nu.
    \end{align*}
\end{prop}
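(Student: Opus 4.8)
The plan is to build the factors $L_j$ by a recursive construction inside the $\SG$-calculus, using the uniform separation \eqref{eq:4bis.2} of the characteristic roots to invert the symbols that occur and Proposition \ref{propasymp} to sum the resulting corrections; alternatively one reduces to the non-classical version of the statement proved in \cite{CoRo} and checks that the polyhomogeneous structure \eqref{eq:classsymp} survives. Set $\Lambda_j(t)=D_t-\Op{\tau_j(t)}$, and recall from Definition \ref{def:4.13} that the principal symbol of $L$ is $l(t,x,\xi,\tau)=\prod_{j=1}^\mu(\tau-\tau_j(t,x,\xi))^{l_j}$. I would work in the ring of polynomials in $D_t$ whose coefficients are $t$-families of classical $\SG$ pseudodifferential operators in $x$, graded by declaring $D_t$ and each $\Op{\tau_j}$ to have $\SG$-order $1$ in both the $x$- and the $\xi$-variables, consistently with \eqref{eq:classsymp}; under this grading $L$ is homogeneous of order $\nu$ with top part $l$, and \emph{perfect} factorisation means precisely that in $L_j=\Lambda_j^{l_j}+\sum_{k=1}^{l_j}\Op{s_{jk}(t)}\,\Lambda_j^{l_j-k}$ every correction $\Op{s_{jk}}\Lambda_j^{l_j-k}$ has order $\le l_j-1$, i.e. $s_{jk}$ has order $k-1$.

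First I would take the zeroth approximation $L^{(0)}=\Lambda_\mu^{l_\mu}\cdots\Lambda_1^{l_1}$; by the factorisation of $l$ and the composition rules recalled in Subsections \ref{subs:1.3}--\ref{subs:1.4}, $L-L^{(0)}$ is a polynomial in $D_t$ of $D_t$-degree less than $\nu$ with coefficients of graded $\SG$-order $\le\nu-1$. The inductive step then reduces, at the level of principal symbols, to solving a polynomial identity in $\tau$ of degree $<\nu$ of the shape
\[
\sum_{j=1}^\mu\Big(\prod_{i\ne j}(\tau-\tau_i)^{l_i}\Big)\sum_{k=1}^{l_j}s_{jk}\,(\tau-\tau_j)^{l_j-k}=e(\tau),
\]
where $e$ is the (known) principal symbol of the error accumulated so far: this is the Hermite interpolation map with nodes $\tau_1,\dots,\tau_\mu$, which is invertible exactly because the roots are distinct, and whose inverse only involves the reciprocals $(\tau_j-\tau_i)^{-1}$, which by \eqref{eq:4bis.2} belong to $C^\infty(J,\SG^{-1,-1}_{1,1,\mathrm{cl}}(\rr{2d}))$. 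Solving yields classical symbols $s_{jk}$ of the claimed orders; subtracting the corresponding correction lowers the order of the error by one; iterating produces corrections $s_{jk}^{(n)}$ with orders decreasing in integer steps, and an asymptotic sum in the sense of Proposition \ref{propasymp} and Definition \ref{def:gensgasexp} gives
\[
s_{jk}\sim\sum_{n\ge1}s_{jk}^{(n)}\in C^\infty(J,\SG^{k-1,k-1}_{1,1,\mathrm{cl}}(\rr{2d})),
\]
classicality being inherited because each term is classical with strictly decreasing integer orders. By construction the leftover is a polynomial in $D_t$ of degree $<\nu$ whose coefficients are infinitely smoothing in $x$, i.e. $\sum_{s=1}^\nu\Op{r_s(t)}\,D_t^{\nu-s}$ with $r_s\in C^\infty(J,\cS(\rr{2d}))$. (Equivalently, one may peel off the $L_j$ one at a time by Euclidean-type division in this ring, the divisors being monic in $D_t$.)

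The main difficulty I expect is regularity-theoretic rather than conceptual: several of the intermediate symbols --- the products $\prod_{i\ne j}(\tau-\tau_i)^{l_i}$ and especially the reciprocals produced by the Hermite inversion --- are merely polynomially bounded and need not satisfy the moderateness condition \eqref{moderate}, so the classical asymptotic-summation argument cannot be applied verbatim and one must invoke the generalised-$\SG$ asymptotic-expansion machinery of \cite{CoTo} codified in Proposition \ref{propasymp}. In parallel one must keep careful track of the $\SG$-orders across all compositions of the powers $\Lambda_j^{l_j}$ to confirm that $s_{jk}$ lands in $\SG^{k-1,k-1}_{1,1,\mathrm{cl}}$ and not in a class one order too large, and verify that the polyhomogeneous structure \eqref{eq:classsymp} is preserved at every step and passes to the asymptotic sum. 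Once these points are settled the construction closes and gives $L=L_\mu\cdots L_1+\sum_{s=1}^\nu\Op{r_s(t)}\,D_t^{\nu-s}$ with $L_j$, $s_{jk}$ and $r_s$ as stated, in agreement with \cite{CoRo}.
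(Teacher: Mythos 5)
Your proposal is correct and follows essentially the route the paper relies on: the paper gives no proof of its own but cites \cite{CoRo}, where precisely this recursive perfect-factorization argument is carried out --- zeroth approximation $\Lambda_\mu^{l_\mu}\cdots\Lambda_1^{l_1}$, Hermite-type inversion of the interpolation identity made possible by the root separation \eqref{eq:4bis.2} (so that $(\tau_j-\tau_i)^{-1}\in\SG^{-1,-1}_{1,1,\mathrm{cl}}$), iterative lowering of the error order, and asymptotic summation, with the leftover coefficients in $C^\infty(J,\cS(\rr{2d}))$. The only superfluous point is your concern about non-moderate weights: every weight occurring in this construction is of the form $\vartheta_{m,\mu}$, so the ordinary (classical) $\SG$ asymptotic summation already applies and the generalised machinery of \cite{CoTo} is not needed here.
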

The following corollary, also obtained in \cite{CoRo}, follows by means of a reordering
of the roots $\tau_{j}$ of the principle symbol of $L$.

\begin{cor}
    \label{cor:4bis.3}
    Let $c_{j}$, $j=1, \dots, \mu$, denote the reorderings of the
    $\mu$-tuple $(1, \dots,$ $\mu)$ given by
    \begin{eqnarray*}
	& &
	c_{j}(i) =
	\left\{
	\begin{array}{ll}
	    j + i       & \mbox{for $j + i \le \mu$}
	    \\
	    j + i - \mu & \mbox{for $j + i  >  \mu$},
	\end{array}
	\right.
	\\
	& &
	i, j = 1, \dots, \mu,
    \end{eqnarray*}
    that is, $c_{1} = (2, \dots, \mu, 1)$, \dots,
    $c_{\mu} = (1, \dots, \mu)$. Then, under the same hypotheses of
    Proposition \ref{prop:4bis.2}, we have
    \[
    L = L^{(m)}_{c_{m}(\mu)} \cdots L^{(m)}_{c_{m}(1)} +
	\sum_{s=1}^\nu \Op{r^{(m)}_{s}(t)} D_{t}^{\nu-s}
    \]
    with $L^{(m)}_{j}= (D_{t} - \Op{\tau_j(t)})^{l_{j}} + \sum_{k=1}^{l_{j}}
             \Op{s^{(m)}_{jk}(t)} \, (D_{t} - \Op{\tau_j(t)})^{l_{j}-k}$ and
    \begin{eqnarray*}
	& &
	s^{(m)}_{jk} \in C^\infty(J, \SG^{k-1, k-1}_{1,1,\mathrm{cl}}(\rr{2d})),
	r^{(m)}_{s} \in C^\infty(J,\cS(\rr{2d})),
	\\
	& &
	m,j=1, \dots, \mu, k = 1, \dots, l_{j}, s= 1, \dots, \nu.
    \end{eqnarray*}
\end{cor}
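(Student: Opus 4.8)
The plan is to deduce Corollary~\ref{cor:4bis.3} from Proposition~\ref{prop:4bis.2} by nothing more than a relabelling of the characteristic roots. The crucial observation is that the separation hypothesis \eqref{eq:4bis.2}, which enumerates the distinct roots in increasing order, enters the perfect-factorization scheme only through its permutation-invariant consequence
\[
	|\tau_i(t,x,\xi)-\tau_j(t,x,\xi)|\ge C\norm{\xi}\norm{x},\qquad i\ne j,\ t\in J,\ x,\xi\in\rr{d},
\]
which follows from \eqref{eq:4bis.2} by the triangle inequality (summing consecutive gaps, each bounded below by $C\norm{\xi}\norm{x}$). Hence the construction goes through verbatim if the $\mu$ distinct roots are enumerated in any prescribed order, in particular the cyclic order dictated by $c_m$.

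Concretely, I would fix $m\in\{1,\dots,\mu\}$ and set $\widetilde{\tau}_j:=\tau_{c_m(j)}$ and $\widetilde{l}_j:=l_{c_m(j)}$ for $j=1,\dots,\mu$. Since $c_m$ is a bijection of $\{1,\dots,\mu\}$, the functions $\widetilde{\tau}_1,\dots,\widetilde{\tau}_\mu\in C^\infty(J,\SG^{1,1}_{1,1,\mathrm{cl}}(\rr{2d}))$ are again precisely the $\mu$ distinct real roots of the characteristic equation \eqref{eq:4bis.73}, with the same multiplicities $\widetilde{l}_j$ (merely reindexed), and they satisfy the pairwise bound displayed above. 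Thus $L$, regarded as a $\SG$-classical hyperbolic operator with constant multiplicities whose roots are enumerated as $(\widetilde{\tau}_1,\dots,\widetilde{\tau}_\mu)$, fulfils the hypotheses of Proposition~\ref{prop:4bis.2} in the pairwise-separation form.

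Applying that proposition then yields
\[
	L=\widetilde{L}_\mu\cdots\widetilde{L}_1+\sum_{s=1}^\nu\Op{\widetilde{r}_s(t)}\,D_t^{\nu-s},
\]
with $\widetilde{L}_j=(D_t-\Op{\widetilde{\tau}_j(t)})^{\widetilde{l}_j}+\sum_{k=1}^{\widetilde{l}_j}\Op{\widetilde{s}_{jk}(t)}\,(D_t-\Op{\widetilde{\tau}_j(t)})^{\widetilde{l}_j-k}$, and $\widetilde{s}_{jk}\in C^\infty(J,\SG^{k-1,k-1}_{1,1,\mathrm{cl}}(\rr{2d}))$, $\widetilde{r}_s\in C^\infty(J,\cS(\rr{2d}))$. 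A relabelling, i.e.\ putting $L^{(m)}_{c_m(j)}:=\widetilde{L}_j$, $s^{(m)}_{c_m(j),k}:=\widetilde{s}_{jk}$ and $r^{(m)}_s:=\widetilde{r}_s$, together with the fact that $c_m(j)$ runs over $\{1,\dots,\mu\}$ as $j$ does while $\widetilde{l}_j=l_{c_m(j)}$, turns the identity above into exactly the asserted factorization $L=L^{(m)}_{c_m(\mu)}\cdots L^{(m)}_{c_m(1)}+\sum_{s=1}^\nu\Op{r^{(m)}_s(t)}D_t^{\nu-s}$, with $L^{(m)}_j$ of the stated form and the claimed symbol-class memberships.

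The only point that genuinely needs verification — and hence the main (minor) obstacle — is the assertion in the first paragraph, namely that the proof of Proposition~\ref{prop:4bis.2} (equivalently, the $\SG$ version of the Mizohata Lemma of Perfect Factorization established in \cite{CoRo}) depends on the roots solely through their pairwise separation and not through their being monotonically ordered. This is indeed the case: that construction proceeds by peeling off one factor $(D_t-\Op{\tau_j})^{l_j}$ at a time, and each peeling step requires only that the root being removed stay uniformly separated, in the $\norm{\xi}\norm{x}$ scale, from every root not yet removed; the bookkeeping of the lower-order correction symbols $s_{jk}$ and of the smoothing remainders $r_s$ is insensitive to the order chosen. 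Granting this, the corollary is immediate, as \emph{any} permutation of the roots would do — and in particular the cyclic ones $c_m$, which are the ones needed in the next subsection.
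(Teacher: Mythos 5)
Your proposal is correct and follows essentially the same route as the paper, which simply observes that the corollary "follows by means of a reordering of the roots" of the principal symbol and refers to the factorization of Proposition \ref{prop:4bis.2} (the $\SG$ Mizohata lemma from \cite{CoRo}); your only addition is the explicit remark that the ordered-gap condition \eqref{eq:4bis.2} yields the permutation-invariant pairwise separation actually used in the peeling construction, which is exactly the point the paper leaves implicit.
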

\begin{defn}
We say that a $\SG$-classical hyperbolic operator 
$L$ is of Levi type if it satisfies the $\SG$-Levi
condition\footnote{Let us observe that \eqref{eq:4bis.111} needs to be
fulfilled only for a single value of $m$.}
\begin{equation}
    \label{eq:4bis.111}
    s^{(m)}_{jk} \in C^\infty(J, \SG^{0,0}_{1,1,\mathrm{cl}}(\rr{2d})),
    \hspace{0.2cm}
    m,j=1, \dots, \mu,
    k=1, \dots, l_{j}.
\end{equation}
\end{defn}
Theorem \ref{thm:4bis.1} below gives the well-posedness for the Cauchy problem
\eqref{eq:4bis.74} and the propagation results of the global wave-front sets of modulation
space type $\WFF^m_\cC(u(t))$, $m=1,2,3$, for the corresponding solution $u(t)$, 
under natural assumptions on the initial data and the modulation space $\cC$.
It immediately follows by the analysis of $\SG$-classical hyperbolic Cauchy problems in 
\cite{CoPa}, by Section \ref{sec3} and by Theorem \ref{thm:3.22}.

\par

    We here consider a $\SG$-classical hyperbolic operator $L$ with constant multiplicities and
    of Levi type, and denote by $l = \max \{ l_{1}, \dots, l_{\mu} \}$
    the maximum multiplicity of the distinct real roots $\tau_{j}$, $j=1,\dots,\mu$, of the
    characteristic equation \eqref{eq:4bis.73}.
    Then, as proved in \cite{coriasco2,CoPa},
 for any choice of initial data $u^j_0\in\cS^\prime(\rr{d})$, $j=0,\dots,\nu-1$,
    the Cauchy problem \eqref{eq:4bis.74} admits a unique solution $u
    \in C(J^\prime,\cS^\prime(\rr{d}))$, $J^\prime=[-T^\prime,T^\prime]$,
    $0<T^\prime\le T$. Collecting the initial conditions in the vector
    \begin{eqnarray*}
	& &
	c_0 = \left(
	\begin{array}{c}
	    u_{0}^{0}
	    \\
	    u^{1}_{0}\rule{0mm}{5mm}
	    \\
	    \vdots\rule{0mm}{5mm}
	    \\
	    u^{\nu-1}_{0}\rule{0mm}{5mm}
	\end{array}
	\right),
    \end{eqnarray*}
    the solution $u$ is given by
    \[
	u(t) = (A_1(t) + \dots + A_{\mu}(t)) c_0,
    \]
    where each $A_{j}(t) = \op_{\varphi_{j}(t)}(a_{j}(t))$
    is a type I FIO with regular phase function $\varphi_{j}
    \in C^\infty(J^\prime, \Phr)\cap C^\infty(J^\prime,\SG^{1,1}_{1,1,\mathrm{cl}}(\rr{2d}))$,
    solution of the eikonal equation associated with $\tau_{j}$,
    and vector-valued amplitude functions $a_{j} = (a_{j0}, \dots, a_{j\nu-1})$
    with $a_{jk} \in
    C^\infty(J^\prime,\SG^{l-k-1,l-k-1}_{1,1,\mathrm{cl}}(\rr{2d}))$,
    $j = 1, \dots, \mu$, $k=0, \dots, \nu-1$.

\begin{thm}
    \label{thm:4bis.1}
	Let $L$ be as above, and let $u_0^k\in\cB_k$, $k=0,\dots,\nu-1$, with the
	$\nu$-tuple of modulation spaces
    $(\cB_0,\dots,\cB_{\nu-1})$.
    Also assume that the modulation space $\cC$ is such that $(\cB_k,\cC)$, $k=0,\dots,\nu-1$, 
    are weakly $\SG$-ordered with respect to
    \[
    	1, 1, \norm{x}^{l-k-1} \norm{\xi}^{l-k-1}, \varphi_k(t) \text{ and } \emptyset.
    \]
    Then the Cauchy problem \eqref{eq:4bis.74} is well-posed
    with respect to 
    $(\cB_0,\dots,\cB_{\nu-1})$ and $\cC$, $u\in C(J^\prime,\cC)$, and
    \[
    \WFF^m_\cC( u(t) ) \subseteq \left[ \bigcup_{j=1}^\mu \bigcup_{k=0}^{\nu-1}
	                 \phi_{j}(t)(\WFF^m_{\cB_k} (u_{0}^k)) \right], \hspace{5mm} m=1,2,3,
     \]
    where $\phi_j(t)$ is the canonical transformation \eqref{eq:trsympl} associated
    with the phase function $\varphi_{j}(t)$.
\end{thm}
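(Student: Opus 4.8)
To prove Theorem~\ref{thm:4bis.1}, the plan is to deduce both assertions from the solution formula for \eqref{eq:4bis.74} recalled above, applied to each Fourier integral summand separately, using the continuity results of Section~\ref{sec3} and the propagation theorems of the present section. First I would fix $T'\in(0,T]$ small enough, $J'=[-T',T']$, so that the unique solution $u\in C(J',\cS'(\rr d))$ is given, modulo a remainder with kernel in $C^\infty(J',\cS(\rr{2d}))$, by $u(t)=\sum_{j=1}^\mu A_j(t)c_0=\sum_{j=1}^\mu\sum_{k=0}^{\nu-1}\op_{\varphi_j(t)}(a_{jk}(t))u_0^k$, with $\varphi_j\in C^\infty(J',\Phr)\cap C^\infty(J',\SG^{1,1}_{1,1,\mathrm{cl}}(\rr{2d}))$ and $a_{jk}\in C^\infty(J',\SG^{l-k-1,l-k-1}_{1,1,\mathrm{cl}}(\rr{2d}))$; the remainder maps $\cS'$ continuously into $\cS\subseteq\cC$ for each $t$, hence is irrelevant to both statements and I discard it. In particular each amplitude lies in $\SG^{(\omega_{0,k})}_{1,1}(\rr{2d})$ with the trivial weight $\omega_{0,k}=\vartheta_{l-k-1,l-k-1}$.

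For well-posedness I would invoke the hypothesis that $(\cB_k,\cC)$ is weakly $\SG$-ordered, in the type-I sense of Definition~\ref{admspacesdef}, with respect to $(1,1,\omega_{0,k},\varphi_j(t),\emptyset)$ for every $j$ --- for modulation spaces this is precisely what Theorem~\ref{FIOmodcont} and Remark~\ref{remSGadm} provide, and it is insensitive to the particular choice of $\varphi_j(t)\in\Phr$. Since the support restriction in Definition~\ref{admspacesdef} is vacuous when $\Omega=\emptyset$, this makes $\op_{\varphi_j(t)}(a_{jk}(t))\colon\cB_k\to\cC$ continuous, and summing over the finitely many indices gives $u(t)\in\cC$ whenever $u_0^k\in\cB_k$, with a continuous solution map. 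Continuity of $t\mapsto u(t)$ into $\cC$, hence $u\in C(J',\cC)$, I would obtain from the smooth dependence of $\varphi_j(t)$ and $a_{jk}(t)$ on $t$ and the compactness of $J'$, the relevant continuity constants depending only on finitely many seminorms of phase and amplitude.

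For the propagation statement I would first use subadditivity of the global wave-front set under finite sums --- a consequence of the microlocality recalled in Subsection~\ref{subs:4.1} --- together with the discarded smoothing remainder, to reduce, for each $m\in\{1,2,3\}$, to estimating $\WFF^m_\cC(\op_{\varphi_j(t)}(a_{jk}(t))u_0^k)$. To each such term I would apply Theorem~\ref{thm:3.23}: $\varphi_j(t)$ is $\SG$-classical, hence $m$-admissible for all $m$ by Remark~\ref{rem:transfcharbis}; by the way the $\varphi_j$ are constructed via the eikonal equations, shrinking $T'$ further ensures that every $\varphi_j(t)$, $t\in J'$, satisfies \eqref{eq:varphitau} for a fixed $\tau\in(0,1)$ (cf. \cite{Ku,coriasco2,CoPa,CoRo}); the weight $\omega_{0,k}=\vartheta_{l-k-1,l-k-1}$ is invariant under the canonical transformation $\phi_j(t)$ of \eqref{eq:trsympl} by Remark~\ref{rem:trivweight}; $r=\rho=1\ge1/2$; and \eqref{eq:abSupport} is vacuous for $\Omega=\emptyset$. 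Theorem~\ref{thm:3.23} then gives $\WFF^m_\cC(\op_{\varphi_j(t)}(a_{jk}(t))u_0^k)\subseteq\phi_j(t)(\WFF^m_{\cB_k}(u_0^k))^{\mathrm{con}_m}$, and the union over $j,k$ yields the assertion once the conical closures are absorbed.

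The step I expect to be the main obstacle is precisely absorbing those $\mathrm{con}_m$-closures. Because the hyperbolic amplitudes $a_{jk}$ are not $\SG$-elliptic, one cannot apply the sharp Theorem~\ref{thm:3.22} termwise and must instead use Theorem~\ref{thm:3.23}, whose conclusion a priori carries the closures $(\,\cdot\,)^{\mathrm{con}_m}$ on the right. Removing them requires exploiting the $\SG$-classical structure of the phases: the leading parts of $\varphi_j(t)$ are homogeneous as in \eqref{eq:classphf}--\eqref{eq:classphfbis}, so the canonical transformation $\phi_j(t)$ carries the already $m$-conical sets $\WFF^m_{\cB_k}(u_0^k)$ (Remark~\ref{rem:wfcon}) into sets whose $\mathrm{con}_m$-closure remains controlled by $\bigcup_{j',k'}\phi_{j'}(t)(\WFF^m_{\cB_{k'}}(u_0^{k'}))$; a clean alternative is to single out those components $a_{jk}$ that are $\SG$-elliptic and treat them with Theorem~\ref{thm:3.22} directly, which produces those terms without any closure and, again via Remark~\ref{rem:transfcharbis}, completes the argument.
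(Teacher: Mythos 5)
Your overall skeleton (representation $u(t)=\sum_{j}A_j(t)c_0$ from \cite{CoPa}, termwise continuity via the weak $\SG$-ordering hypothesis for the well-posedness statement, discarding the smoothing remainder, termwise wave-front estimates plus finite subadditivity for the propagation statement) is the same as the paper's, and the well-posedness half of your argument is fine. The difference lies in the termwise propagation step: the paper obtains Theorem \ref{thm:4bis.1} directly from Theorem \ref{thm:3.22}, i.e.\ from the elliptic Egorov-type result, which yields $\WFF^m_\cC(\op_{\varphi_j(t)}(a_{jk}(t))u_0^k)\subseteq\phi_j(t)(\WFF^m_{\cB_k}(u_0^k))$ with no conical hull, whereas you replace it by Theorem \ref{thm:3.23} on the grounds that the $a_{jk}(t)$ need not be $\SG$-elliptic. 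That observation is legitimate (it is a point the paper passes over quickly), but it puts the burden of removing the $(\,\cdot\,)^{\mathrm{con}_m}$ hulls on you, and this is exactly where your proposal has a genuine gap.

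Neither of your two suggestions closes it. The canonical transformation $\phi_j(t)$ is generated by a classical but not exactly homogeneous phase, so it does not map rays to rays; hence $\phi_j(t)(\WFF^m_{\cB_k}(u_0^k))$ need not be $m$-conical even though $\WFF^m_{\cB_k}(u_0^k)$ is, and its $\mathrm{con}_m$-hull can be strictly larger than the image itself --- your claim that the hull ``remains controlled by'' $\bigcup_{j',k'}\phi_{j'}(t)(\WFF^m_{\cB_{k'}}(u_0^{k'}))$ is precisely the statement to be proved, not a consequence of \eqref{eq:classphf}--\eqref{eq:classphfbis} as written. Your ``clean alternative'' (apply Theorem \ref{thm:3.22} to the $\SG$-elliptic components) says nothing about the non-elliptic components, which are the only ones for which the hull appears in the first place. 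As it stands, your argument proves the theorem only with $\mathrm{con}_m$-hulls inserted on the right-hand side, which is weaker than the stated inclusion. To finish you must either justify the termwise use of the elliptic propagation result as the paper does (via Theorem \ref{thm:3.22} and the structure of the $A_j(t)$), or supply an actual proof, using the $m$-admissibility of the classical phases in the sense of Definition \ref{def:gadm} and the asymptotic homogeneity \eqref{eq:classphfbis}, that the conical hulls add nothing to the union over $j,k$.
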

\begin{cor}
Assume that the hypotheses of Theorem \ref{thm:4bis.1} hold. Then
$\WFF^m_\cC(u(t))$, $t\in J^\prime$, $m=1,2,3$, consists of arcs of bicharacteristics, 
generated by the phase functions $\varphi_j(t)$ and emanating from points 
belonging to  $\WFF^m_{\cB_k} (u_{0}^k)$, $k=0,\dots,\nu-1$.
\end{cor}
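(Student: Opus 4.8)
The plan is to treat this corollary as the geometric reformulation of the inclusion furnished by Theorem~\ref{thm:4bis.1}, the only substantial ingredient being the classical identification of each canonical transformation $\phi_j(t)$ with the Hamiltonian flow of the corresponding real root $\tau_j$ of the characteristic equation \eqref{eq:4bis.73}. Concretely, I would fix $t_0\in J^\prime$, $m\in\{1,2,3\}$, and a point $(x,\xi)\in\WFF^{m}_{\cC}(u(t_0))$; by Theorem~\ref{thm:4bis.1} there are indices $j\in\{1,\dots,\mu\}$, $k\in\{0,\dots,\nu-1\}$ and a point $(y_0,\eta_0)\in\WFF^{m}_{\cB_k}(u_0^k)$ with $(x,\xi)=\phi_j(t_0)(y_0,\eta_0)$, where $\phi_j(t_0)$ is the canonical transformation \eqref{eq:trsympl} generated by $\varphi_j(t_0)$.

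The heart of the matter is then to recall, from the construction of the phase functions in \cite{coriasco, coriasco2, CoPa, CoRo}, that $\varphi_j(t,x,\eta)$ is produced by solving, via the method of characteristics, the eikonal equation
\[
\partial_t\varphi_j(t,x,\eta)+\tau_j\big(t,x,\varphi^\prime_{j,x}(t,x,\eta)\big)=0,
\qquad \varphi_j(0,x,\eta)=\scal{x}{\eta},
\]
and that, as a consequence, the family $\{\phi_j(t)\}_{t\in J^\prime}$ is exactly the flow of the Hamilton system
\[
\dot x(s)=\partial_\xi\tau_j(s,x(s),\xi(s)),\qquad
\dot\xi(s)=-\partial_x\tau_j(s,x(s),\xi(s)),
\]
that is, $\phi_j(t)$ carries $(y_0,\eta_0)$ at time $s=0$ to the value at $s=t$ of the (null) bicharacteristic of $\tau_j$ through $(y_0,\eta_0)$; moreover this flow preserves the $\SG$ structure uniformly for $s\in J^\prime$, hence it is compatible with the regimes underlying the three components $m=1,2,3$ (cf. the homogeneity relations \eqref{eq:classphfbis} and \cite{CoMa, CoPa}). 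Therefore the curve $\gamma_{j,k}\colon s\mapsto\phi_j(s)(y_0,\eta_0)$, $s\in J^\prime$, is precisely an arc of bicharacteristic generated by $\varphi_j$ that emanates, at $s=0$, from $(y_0,\eta_0)\in\WFF^{m}_{\cB_k}(u_0^k)$, and $(x,\xi)=\gamma_{j,k}(t_0)$.

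Finally I would collect these observations: as $(x,\xi)$ ranges over $\WFF^{m}_{\cC}(u(t_0))$ and $t_0$ over $J^\prime$, every such point lies on one of the finitely many bicharacteristic arcs $\gamma_{j,k}$, $j=1,\dots,\mu$, $k=0,\dots,\nu-1$, issuing from $\WFF^{m}_{\cB_k}(u_0^k)$, which is the assertion. (If, in addition, one wants each such arc to be genuinely filled with singularities once its initial point lies in some $\WFF^{m}_{\cB_k}(u_0^k)$, one applies Theorem~\ref{thm:3.22} to the elliptic first-order factors $D_t-\Op{\tau_j(t)}$ of the perfect factorization in Proposition~\ref{prop:4bis.2} and propagates back and forth along $J^\prime$.) The only genuine difficulty is expository, namely making the identification ``$\phi_j(t)=$ Hamiltonian flow of $\tau_j$'' precise in the global $\SG$-classical setting, with uniformity in $t\in J^\prime$ and compatibility with the spatial-infinity/corner structure behind $m=1,2,3$; this is, however, already available in the quoted references, so no new analytic work is required.
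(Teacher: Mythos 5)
Your proposal is correct and follows essentially the route the paper intends: the corollary is read off from the inclusion in Theorem \ref{thm:4bis.1} together with the standard fact (from the construction of the $\varphi_j(t)$ as solutions of the eikonal equations associated with the roots $\tau_j$, cf. \cite{coriasco2, CoPa, CoRo}) that $t\mapsto\phi_j(t)(y_0,\eta_0)$ is the bicharacteristic of $\tau_j$ issuing from $(y_0,\eta_0)$, so each singular point of $u(t)$ lies on such an arc emanating from the wave-front set of some initial datum. The parenthetical strengthening via Theorem \ref{thm:3.22} and the perfect factorization is optional and not needed for the statement as formulated.
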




\end{document}